\newtheorem{theorem}{Theorem}[section]
\newtheorem{proposition}{Proposition}[section]
\newtheorem{lemma}[theorem]{Lemma}
\newtheorem{definition}[theorem]{Definition}
\newtheorem{corollary}[theorem]{Corollary}
\theoremstyle{definition}
\numberwithin{equation}{section}
\newcommand{\R}{\mathbb{R}}
\newcommand{\C}{\mathbb{C}}
\newcommand{\Z}{\mathbb{Z}}
\newcommand{\N}{\mathbb{N}}
\newcommand{\eps}{\epsilon}
\newcommand{\norm}[1]{\| #1 \|}
\newcommand{\norminf}[1]{\| #1 \|_\infty}    
\newcommand{\set}[1]{\left\{#1\right\}}
\newcommand{\E}[1]{\mathbb{E}\left[#1\right]}
\newcommand{\Pb}[1]{\mathbb{P}\left[#1\right]}
\newcommand{\Et}[1]{\mathbb{E}_\theta\left[#1\right]}
\newcommand{\Pt}[1]{\mathbb{P}_\theta\left[#1\right]}
\newcommand{\Var}[1]{\ensuremath{\var\left(#1\right)}}
\newcommand{\intpart}[1]{\ensuremath{\left[ #1 \right]}}
\newcommand{\fracpart}[1]{\ensuremath{\left\{ #1 \right\}}}
\newcommand{\vB}{\boldsymbol{\varphi}}
\renewcommand{\Re}{\mathrm{Re}}
\renewcommand{\Im}{\mathrm{Im}}
\DeclareMathOperator{\cov}{Cov}
\DeclareMathOperator{\var}{Var}
\begin{document}

\title[The Characteristic Polynomial at Different Points]{The Characteristic Polynomial of a Random Permutation Matrix at Different Points} 

\author[K. Dang]{Kim Dang}
\address{K. Dang\\
Department of Mathematics \\
Yale University\\ 
10 Hillhouse Avenue\\ 
New Haven\\ CT 06511-6810, United States}
\email{kim.dang@yale.edu}

\author[D.~Zeindler]{Dirk Zeindler$^1$}
\address{D. Zeindler, Department of Mathematics, University of Bielefeld, Bielefeld D-33501, Germany}
\email{zeindler@math.uni-bielefeld.de}
\thanks{$^1$support by the SNF (Swiss National Science Foundation)}

%
%
%
%

%


\date{\today}

\maketitle

\begin{abstract}
We consider the logarithm of the characteristic polynomial of random permutation matrices, evaluated on a finite set of different points. The permutations are chosen with respect to the Ewens distribution on the symmetric group. We show that the behavior at different points is independent in the limit and are asymptotically normal. Our methods enables us to study also the wreath product of permutation matrices and diagonal matrices with iid entries 
and more general class functions on the symmetric group with a multiplicative structure..
\end{abstract}

\tableofcontents
%
\newpage
\section{Introduction}

The characteristic polynomial of a random matrix is a well studied object in Random Matrix Theory (RMT) (see for example \cite{ashkan}, \cite{PhysRevLett.75.69}, \cite{HKO01}, \cite{HKOS}, \cite{snaith}, \cite{7459}, \cite{EJP2010-34}, \cite{DZ-CLT}).
An important result due Keating and Snaith \cite{snaith} on $n\times n$ CUE matrices is that
the imaginary and the real part of the logarithm of the characteristic polynomial converge jointly
in law to independent standard normal distributed random variables, after normalizing by
$\sqrt{(1/2)\log n}$. Hughes, Keating and O'Connell refined this result in \cite{HKO01}: evaluating
the logarithm of the characteristic polynomial, normalized by $\sqrt{(1/2)\log n}$, for a discrete
set of points on the unit circle, this leads to a collection of i.i.d. standard (complex) normal
random variables.\\

In \cite{HKOS}, Hambly, Keevash, O'Connell and Stark give a Gaussian limit for the logarithm of the characteristic polynomial of random permutation matrices under uniform measure on the symmetric group.
This result has been extended by Zeindler in \cite{DZ-CLT} to the Ewens distribution on the symmetric group and to the logarithm of multiplicative class functions, introduced in \cite{associated_class}.\\

In this paper, we will generalize the results in \cite{HKOS} and \cite{DZ-CLT} in two ways. First, we follow the spirit of \cite{HKO01} by considering the behavior of the logarithm of the characteristic polynomial of a random permutation matrix at different points $x_1,\dots, x_d$. Second, we state CLT's for the logarithm of characteristic polynomials for matrix groups related to permutation matrices, such as some Weyl groups \cite[section 7]{associated_class} and of the wreath product $\mathbb{T}\wr S_n$ \cite{wieandPaper2}, where $\mathbb{T} = \{z\in\C; |z| =1\}$.

In particular, we consider $n\times n$-matrices $M=(M_{ij})_{1\leq i,j\leq n}$ of the following form: for a permutation $\sigma\in S_n$ and  a complex valued random variable $z$,%
\begin{equation}
\label{eq_def_wreath_product_matrix}
 M_{ij}(\sigma,z):=z_i\delta_{i,\sigma(j)},
\end{equation}
where $z_i$ is a family of i.i.d. random variables such that $z_i \stackrel{d}{=} z$, $z_i$ independent of $\sigma$. Here, $\sigma$ is chosen with respect to the Ewens distribution, i.e.
\begin{align}
\label{eq_Ewens_distribution}
\Pt{\sigma}
:=
\frac{\theta^{l_\sigma}}{\theta(\theta+1)\dots(\theta+n-1)},
\end{align}
for fixed parameter $\theta>0$ and $l_\sigma$ being the total number of cycles of $\sigma$. The \emph{Ewens measure} or \emph{Ewens distribution} is a well-known measure on the the symmetric group $S_n$, appearing for example in population genetics \cite{MR0325177}. It can be viewed as a generalization of the uniform distribution (i.e $\Pb{A}=\frac{|A|}{n!}$) and has an additional weight depending on the total number of cycles. The case $\theta=1$ corresponds to the uniform measure. Matrices $M(\sigma,z)$ of the form \eqref{eq_def_wreath_product_matrix} can be viewed as generalized permutation matrices $M(\sigma)=M(\sigma,1)$, where the $1$-entries are replaced by i.i.d. random variables.
Also, it is easy to see that elements of the wreath product $\mathbb{T} \wr S_n$ with $\mathbb{T} = \{z\in\C; |z| =1\}$ (see \cite{wieandPaper2}  and \cite[section~4.2]{associated_class}) or elements of some Weyl groups (treated in \cite[section 7]{associated_class}) are of the form \eqref{eq_def_wreath_product_matrix}. In this paper, we will not give any more details about wreath products and Weyl groups, since we do not use group structures. \\\\
We define the function $Z_{n,z}(x)$ by
\begin{align}
\label{eq_def_Znx_simple_1}
Z_{n,z}(x)
:=
\det\bigl(I-x^{-1}M(\sigma,z)\bigr), \quad x\in\C^*.
\end{align}
Then, the characteristic polynomial of $M(\sigma,z)$ has the same zeros as $Z_{n,z}(x)$. We will
study the characteristic polynomial by identifying it with $Z_{n,z}(x)$, following the convention of
\cite{associated_class}, \cite{EJP2010-34} or \cite{DZ-CLT}.\\
By using that the random variables $z_i$, $1\leq i\leq n$ are i.i.d., a simple computation shows the following equality in law (see \cite{associated_class}, Lemma 4.2):
\begin{equation}
\label{eq_Zn_explicit_2}
Z_{n,z}(x)
\stackrel{d}{=}
\prod_{m=1}^n\prod_{k=1}^{C_m}(1-x^{-m} T_{m,k}),
\end{equation}
where $C_m$ denotes the number of cycles of length $m$ in $\sigma$ and $(T_{m,k})_{m, k \geq 1}$ is a family of independent random variables, independent of $\sigma \in S_n$, such that
\begin{equation}
T_{m,k} \stackrel{d}{=}\prod_{j=1}^{m} z_j. 
\end{equation}

Note that the characteristic polynomial  $Z_{n,z}(x)$ of $M(\sigma,z)$ depends strongly on the
random variables $C_m$ ($1\leq m\leq n$). The distribution of $(C_1,C_2,\cdots,C_n)$ with respect to
the Ewens distribution with parameter $\theta$ was first derived by Ewens (1972), \cite{MR0325177}.
It can be computed, using the inclusion-exclusion formula, \cite[chapter 4, $(4.7)$]{barbour}. \\

We are interested in the asymptotic behavior of the logarithm of \eqref{eq_def_Znx_simple_1} and
therefore, we will study the characteristic polynomial of $M(\sigma, z)$ in terms of \eqref{eq_Zn_explicit_2}, by choosing the branch of logarithm in a suitable way. In view of \eqref{eq_Zn_explicit_2}, it is natural to choose it as follows: 

\begin{definition}
\label{def_log_Zn,Wn_1dim_1}
Let $x = e^{2 \pi i \varphi} \in \mathbb{T}$ be a fixed number and $z$ a $\mathbb{T}$--valued random variable. Furthermore, let $(z_{m,k})_{m,k=1}^\infty$ and $(T_{m,k})_{m,k=1}^\infty$ be two sequences of independent random variables, independent of $\sigma \in S_n$ with
\begin{align}
  z_{m,k} \stackrel{d}{=} z \ \text{ and } \  T_{m,k} \stackrel{d}{=} \prod_{j=1}^m z_{j,k}.
\end{align}
We then set
\begin{align}
 \label{eq_log_Zn_1}
  \log\bigl(Z_{n,z}(x) \bigr)
  &:=
  \sum_{m=1}^n\sum_{k=1}^{C_m} \log (1-x^{-m}T_{m,k}),
\end{align}
where we use for $\log(.)$ the principal branch of logarithm. We will deal with negative values as follows: $\log(-y)=\log y+i\pi$, $y\in\R_+$. 
\end{definition}
Note, that it is not necessary to specify the logarithm at $0$ since our assumptions in the cases studied always ensure that
this occurs only with probability 0 (see Theorem~\ref{thm_CLT_W1_1dim_general} and Theorem~\ref{thm_CLT_W2_1dim_general}).

In this paper, we show that under various conditions, $\log Z_{n,z}(x)$ converges to a complex standard Gaussian distributed random variable after normalization and the behavior at different points is independent in the limit. Moreover, the normalization by $\sqrt{(\pi^2/12)\theta\log n}$ is independent of the random variable $z$. This covers the result in \cite{HKOS} for $\theta=1$ and $z$ being deterministic equal to $1$. We state this more precisely:

\begin{proposition}
\label{prop_Zn_general_1dim}
Let $S_n$ be endowed with the Ewens distribution with parameter $\theta$, $z$ a $\mathbb{T}$-valued random variable and $x \in \mathbb{T}$ be not a root of unity, i.e. $x^m \neq 1 $ for all $m\in\Z$.

Suppose that $z$ is uniformly distributed. Then, as $n\rightarrow\infty$,
 \begin{align}
  \label{eq CLT_z_uniform_Zn_real}
    &\frac{\Re\left(\log\bigl(Z_{n,z}(x) \bigr)\right)}{\sqrt{\frac{\pi^2}{12}\theta\log n}}
    \stackrel{d}{\longrightarrow}
     N_R \quad\text{and}\\
      \label{eq CLT_z_uniform_Zn_im}
     &\frac{\Im\left(\log\bigl(Z_{n,z}(x) \bigr)\right)}{\sqrt{\frac{\pi^2}{12}\theta\log n}}
    \stackrel{d}{\longrightarrow}
     N_I,
  \end{align}
  with $N_R,N_I \sim \mathcal{N} \left(0,1 \right)$.
\end{proposition}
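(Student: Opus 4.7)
The plan is to exploit the rotational symmetry inherited from $z$ being uniform on $\mathbb{T}$. Since the uniform distribution is Haar measure on the group $\mathbb{T}$, each product $T_{m,k} = \prod_{j=1}^m z_{j,k}$ is again uniform on $\mathbb{T}$, and hence so is $x^{-m}T_{m,k}$ for every $m$. The family $\{x^{-m}T_{m,k}\}_{m,k}$ is therefore i.i.d.\ uniform on $\mathbb{T}$ and independent of $\sigma$. Reindexing \eqref{eq_log_Zn_1} as a single sum over cycles gives
\begin{equation*}
\log Z_{n,z}(x) \stackrel{d}{=} \sum_{k=1}^{L_n} \log(1-U_k),
\end{equation*}
where $L_n := \sum_{m=1}^n C_m$ is the total number of cycles of $\sigma$ and $U_1,U_2,\dots$ are i.i.d.\ uniform on $\mathbb{T}$, independent of $L_n$. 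Observe that the hypothesis that $x$ is not a root of unity plays no role in this case; it would be needed only for deterministic $z$.

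Next I would compute the first two moments of a single summand. Writing $U = e^{i\theta}$ with $\theta$ uniform on $[0,2\pi)$, one has $|1-e^{i\theta}| = 2|\sin(\theta/2)|$ and, with the branch specified in Definition~\ref{def_log_Zn,Wn_1dim_1}, $\arg(1-e^{i\theta}) = (\theta-\pi)/2$. Using the classical evaluation $\int_0^\pi \log^2(2\sin u)\,du = \pi^3/12$ together with an elementary computation for the imaginary part, one finds
\begin{equation*}
\E{\Re \log(1-U)} = \E{\Im \log(1-U)} = 0, \qquad \var(\Re \log(1-U)) = \var(\Im \log(1-U)) = \frac{\pi^2}{12},
\end{equation*}
which already pins down the normalization $\sqrt{(\pi^2/12)\theta\log n}$. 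A symmetry argument also shows the two parts are uncorrelated, which will be useful for later joint statements.

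The final ingredient is the classical fact that under the Ewens measure with parameter $\theta$, the total number of cycles satisfies $L_n/\log n \to \theta$ in probability, a consequence of $C_m \Rightarrow \mathrm{Poisson}(\theta/m)$ and the Feller coupling (cf.\ \cite{barbour}). Since $L_n$ is independent of $(U_k)$ and each summand has finite second moment, Anscombe's random-index CLT applied separately to real and imaginary parts yields
\begin{equation*}
\frac{\Re \log Z_{n,z}(x)}{\sqrt{L_n\,\pi^2/12}} \stackrel{d}{\longrightarrow} N_R, \qquad \frac{\Im \log Z_{n,z}(x)}{\sqrt{L_n\,\pi^2/12}} \stackrel{d}{\longrightarrow} N_I,
\end{equation*}
and Slutsky's lemma together with $L_n/(\theta\log n) \to 1$ in probability then allows me to replace $L_n$ by $\theta\log n$ under the square root, giving the claim.

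The only point requiring care is verifying the Lindeberg/uniform-integrability condition behind Anscombe's theorem. This is straightforward because $\Re\log(1-U) = \log|2\sin(\theta/2)|$ has only a logarithmic singularity at $\theta = 0$ and hence finite moments of all orders; a slicker alternative is to condition on $L_n$, use the ordinary i.i.d.\ CLT for a deterministic index $k_n \to \infty$, and combine with $L_n/(\theta\log n) \to 1$ via a subsequence argument.
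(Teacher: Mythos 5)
Your proposal is correct, but it takes a genuinely different route from the paper. The paper derives the proposition as a special case of its general machinery: it writes $\Re\log Z_{n,z}(x)$ and $\Im\log Z_{n,z}(x)$ in the form $A_n=\sum_{m=1}^n\sum_{k=1}^{C_m}X_{m,k}$, invokes Theorem~\ref{thm_trace_infinite_case} (proved by comparing $A_n$ with the Poissonized sum $B_n$ via the Feller coupling, $\E{|A_n-B_n|}=O(1)$, and then applying Lyapunov's CLT), verifies the variance condition $V_n\sim\frac{\pi^2}{12}\log n$ and the $p$-th moment condition using $x^m z_{m,k}\stackrel{d}{=}z_{m,1}$, and kills the centering via $m_R(f)=m_I(f)=0$ (Jensen's formula) for $f(x)=1-x^{-1}$. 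You instead exploit the same Haar-invariance observation more aggressively: since the law of $\log(1-x^{-m}T_{m,k})$ does not depend on $m$, the double sum collapses in distribution to $\sum_{k=1}^{L_n}\log(1-U_k)$ with $U_k$ i.i.d.\ uniform and independent of the total cycle count $L_n$, after which a random-index CLT (or conditioning on $L_n$ plus the ordinary i.i.d.\ CLT) together with $L_n/\log n\to\theta$ and Slutsky finishes the proof; the exact vanishing of $\E{\Re\log(1-U)}$ and $\E{\Im\log(1-U)}$ is what prevents the $O(\sqrt{\log n})$ fluctuations of $L_n$ from contributing, and your variance computations ($\pi^2/12$ for both parts) match the paper's $V_R(f)=V_I(f)$. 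Your observation that the root-of-unity hypothesis is superfluous for uniform $z$ is also consistent with the paper's proof. What your approach buys is elementarity: it avoids Theorem~\ref{thm_trace_infinite_case} and the Lyapunov-type verification entirely, needing only the classical law of large numbers for $L_n$ — though this is best justified by the Feller coupling representation $L_n\stackrel{d}{=}\sum_{i=1}^n\xi_i$ with independent Bernoullis (your appeal to $C_m\Rightarrow\mathrm{Poisson}(\theta/m)$ alone is a little loose). What it gives up is generality: the collapse to a single i.i.d.\ sum indexed by $L_n$ is special to uniform $z$, so it does not extend to the absolutely continuous or discrete cases, to deterministic $z$, or to the multiplicative class functions $w^{1,n}$, $w^{2,n}$, all of which the paper's route handles uniformly.
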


In Proposition~\ref{prop_Zn_general_1dim} $\Re\left(\log\bigl(Z_{n,z}(x) \bigr)\right)$ and $\Im\left(\log\bigl(Z_{n,z}(x) \bigr)\right)$ are converging to normal random variables without centering. This is due to that the expectation is $o(\sqrt{\log n})$. This will become more clear in the proof (see Section~\ref{sec_CLT_in_1dim}).\\
Furthermore, we state a CLT for $\log Z_{n,z}(x)$, evaluated on a finite set of different points $\{x_1,\dots,x_d\}$.

\begin{proposition}
\label{prop_CLT_Zn_general_ddim}
  Let $S_n$ be endowed with the Ewens distribution with parameter $\theta$, $\overline{z} = (z_1, \dots, z_d)$ be a $\mathbb{T}^d$-valued random variable 
and $x_1=e^{2\pi i \varphi_1} ,\dots, x_d = e^{2\pi i \varphi_d} \in \mathbb{T}$ be such that $1, \varphi_1,\dots,\varphi_d$ are linearly independent over $\Z$.

Suppose that $z_1, \dots, z_d$ are uniformly distributed and independent. Then we have, as $n\to\infty$,
\begin{align*}
\frac{1}{\sqrt{\frac{\pi^2}{12}\theta\log n}}
  \left(
\begin{array}{c}
 \log(Z_{n,z_1}(x_1)\bigr)\\
\vdots\\
\log \bigl(Z_{n,z_d}(x_d) \bigr)
\end{array}
\right)
\stackrel{d}{\longrightarrow}
  \left(
\begin{array}{c}
N_1\\
\vdots\\
N_d
\end{array}
\right)
\end{align*}
with $\Re(N_1),\dots,\Re(N_d), \Im(N_1),\dots,\Im(N_d)$ independent standard normal distributed random variables.
\end{proposition}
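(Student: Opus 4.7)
My plan is to exploit a conditional independence inherited from the independence and uniformity of the $z_j$. By Definition~\ref{def_log_Zn,Wn_1dim_1} combined with \eqref{eq_Zn_explicit_2}, writing $T_{m,k}^{(j)}$ for the copy of $T_{m,k}$ attached to $z_j$, one has
\begin{equation*}
\log(Z_{n,z_j}(x_j)) = \sum_{m=1}^n \sum_{k=1}^{C_m} \log(1 - x_j^{-m} T_{m,k}^{(j)}),
\end{equation*}
so each component is a function of $\sigma$ and of the family $(T_{m,k}^{(j)})_{m,k}$. Since the $z_j$ are independent, these families for different $j$ are mutually independent; and because a product of independent $\mathbb{T}$-uniform random variables is again $\mathbb{T}$-uniform, each $T_{m,k}^{(j)}$ is $\mathbb{T}$-uniform, hence so is $x_j^{-m} T_{m,k}^{(j)}$ for any deterministic $x_j\in\mathbb{T}$. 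Conditionally on $\sigma$, the components of the vector $(\log(Z_{n,z_j}(x_j)))_{j=1}^d$ are therefore mutually independent, and each is distributed as $\sum_{\ell=1}^{l_\sigma} W_\ell$ where the $W_\ell$ are i.i.d.\ with $W_\ell \stackrel{d}{=} \log(1-V)$, $V$ uniform on $\mathbb{T}$.

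A short Fourier calculation based on $\log(1-e^{2\pi i \vartheta}) = -\sum_{n\geq 1} e^{2\pi i n\vartheta}/n$ gives $\E{W_1}=0$, $\var(\Re W_1) = \var(\Im W_1) = \pi^2/12$ and $\cov(\Re W_1, \Im W_1) = 0$; higher moments of $W_1$ are all finite since the logarithmic singularity is integrable to any power. Set $a_n := \sqrt{(\pi^2/12)\theta\log n}$ and $L_j := \log(Z_{n,z_j}(x_j))/a_n$. By Cram\'er--Wold it suffices to prove convergence of the joint characteristic function, and the conditional independence above yields
\begin{equation*}
\E{\exp\!\Bigl(i\sum_{j=1}^d (s_j\Re L_j + t_j\Im L_j)\Bigr)} = \E{\prod_{j=1}^d \phi_n(s_j,t_j)^{l_\sigma}},
\end{equation*}
where $\phi_n(s,t) := \E{\exp(i(s\Re W_1 + t\Im W_1)/a_n)}$. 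A standard Taylor expansion then gives $\phi_n(s,t) = 1 - (s^2+t^2)/(2\theta\log n) + o(1/\log n)$, uniformly for $(s,t)$ on compact sets.

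To close the argument I invoke the classical Ewens asymptotic $l_\sigma/(\theta\log n) \to 1$ in probability (via e.g.\ the Feller coupling, \cite{barbour}), which forces $\phi_n(s_j,t_j)^{l_\sigma} \to \exp(-(s_j^2+t_j^2)/2)$ in probability for each $j$. Since every factor has modulus at most $1$, dominated convergence delivers the joint limit $\prod_j \exp(-(s_j^2+t_j^2)/2)$, which is exactly the characteristic function of $d$ independent standard complex Gaussians.

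No step presents a serious obstacle along this route; the crucial observation is that $x_j^{-m}T_{m,k}^{(j)}$ is $\mathbb{T}$-uniform, which entirely decouples the conditional distribution of $\log Z_{n,z_j}(x_j)$ from the point $x_j$. Interestingly, the arithmetic hypothesis that $1,\varphi_1,\ldots,\varphi_d$ be linearly independent over $\mathbb{Z}$ plays no role in this approach; it would become necessary only if one were to relax the independence or uniformity of the $z_j$ and pass through a direct moment computation, where it would serve to ensure that cross-products $\prod_j x_j^{-a_j m}$ do not resonate for nonzero integer vectors $(a_j)$.
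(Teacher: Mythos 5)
Your proof is correct, but it takes a genuinely different route from the paper. The paper obtains this proposition as a special case ($f(x)=1-x^{-1}$) of its general multi-point CLT for multiplicative class functions (Corollary~\ref{thm_CLT_Zn_general_ddim} via Theorem~\ref{thm_CLT_W2_general_ddim}), which in turn rests on the multidimensional CLT of Theorem~\ref{thm_general_CLT_ddim}: Cram\'er--Wold reduces to the one-dimensional Theorem~\ref{thm_trace_infinite_case}, whose proof compares the cycle counts $C_m$ with independent Poisson variables via the Feller coupling (Lemma~\ref{lem_bound_Feller}) and then applies a Lyapunov CLT, together with the covariance computations $\frac1n\sum_m\E{X_{m,1,j}X_{m,1,\ell}}\to\sigma_{j,\ell}$. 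You instead condition on $\sigma$ and exploit the exact rotational invariance of the uniform law: since $x_j^{-m}T^{(j)}_{m,k}$ is again uniform on $\mathbb{T}$, the conditional law of each $\log Z_{n,z_j}(x_j)$ is that of a sum of $l_\sigma$ i.i.d.\ copies of $\log(1-V)$, independent across $j$, so the whole problem collapses to the total cycle count, and the weak law $l_\sigma/(\theta\log n)\to1$ plus a Taylor expansion of the characteristic function and bounded convergence finish the argument (your appeal to Cram\'er--Wold is superfluous --- L\'evy continuity applied to the joint characteristic function you compute already suffices). What your approach buys is brevity and the observation, which is accurate, that in this uniform-independent case the hypothesis that $1,\varphi_1,\dots,\varphi_d$ be linearly independent over $\Z$ is not needed (it is carried along in the statement because the paper's general theorems also cover absolutely continuous, discrete and deterministic $z$, where equidistribution of Kronecker sequences and the extended Koksma--Hlawka inequality are genuinely used). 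What the paper's heavier machinery buys is exactly that generality: your argument is tied to the exact uniformity of $z_j$, which decouples the law from the evaluation points, and it does not extend to the other cases treated in Theorems~\ref{thm_CLT_W1_general_ddim} and~\ref{thm_CLT_W2_general_ddim}.
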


Note that $z_1, \dots, z_d$ are not equal to the family $(z_i)_{1\leq i\leq n}$ of i.i.d. random
variables in \eqref{eq_def_wreath_product_matrix}. In fact, we deal here with $d$ different families
of i.i.d. random variables, where the distributions are given by $z_1, \dots, z_d$ and we thus deal also with 
$d$ different matrices, all basing on the same permutation matrix. We will treat
this more carefully in Section~\ref{sec_CLT_in_ddim}.
A remaining open question is the joint behaviour at different points of $\log\bigl(Z_{n,z}(x)\bigr)$ with $z$ uniform, but
we expect also in this case a central limit theorem.

Proposition~\ref{prop_CLT_Zn_general_ddim} shows that the characteristic polynomial of the random matrices $M(\sigma, z)$ follows
the tradition of matrices in the CUE, if evaluated at different points, due to the result by
\cite{HKO01}. 
Moreover, the proof of Proposition~\ref{prop_CLT_Zn_general_ddim} can also be used for regular
random permutation matrices, i.e. $M(\sigma, 1)$, but requires further assumptions on the points
$x_1, \dots, x_d$. We state this more precisely:

\begin{proposition}
\label{prop_CLT_Zn_trivial_ddim}
 Let $S_n$ be endowed with the Ewens distribution with parameter $\theta$ 
 and $x_1=e^{2\pi i \varphi_1} ,\dots, x_d = e^{2\pi i \varphi_d} \in \mathbb{T}$ be 
 pairwise of finite type (see Definition~\ref{def_finite_type_pairwise}).

We then have for $z_1=\cdots=z_d=1$, as $n\rightarrow\infty$,
\begin{align*}
\frac{1}{\sqrt{\frac{\pi^2}{12}\theta\log n}}
  \left(
\begin{array}{c}
 \log(Z_{n,1}(x_1)\bigr)\\
\vdots\\
\log \bigl(Z_{n,1}(x_d) \bigr)
\end{array}
\right)
\stackrel{d}{\longrightarrow}
  \left(
\begin{array}{c}
N_1\\
\vdots\\
N_d
\end{array}
\right)
\end{align*}
with $\Re(N_1),\dots,\Re(N_d), \Im(N_1),\dots,\Im(N_d)$ independent standard normal distributed random variables.
\end{proposition}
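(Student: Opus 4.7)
My plan is to combine the approach of Proposition~\ref{prop_Zn_general_1dim} with the Cram\'er--Wold device. Since the target limit vector has $2d$ independent standard normal real and imaginary parts, it suffices to show that, for every $a_1,\dots,a_d,b_1,\dots,b_d\in\R$, the real random variable
\begin{align*}
S_n:=\sum_{j=1}^d\Bigl(a_j\Re\log Z_{n,1}(x_j)+b_j\Im\log Z_{n,1}(x_j)\Bigr)=\sum_{m=1}^n C_m\,\phi_m,
\end{align*}
where $\phi_m:=\sum_{j=1}^d\bigl(a_j\log|1-x_j^{-m}|+b_j\arg(1-x_j^{-m})\bigr)$, satisfies
\begin{align*}
\frac{S_n}{\sqrt{(\pi^2/12)\theta\log n}}\stackrel{d}{\longrightarrow}\mathcal{N}\Bigl(0,\tfrac12\sum_{j=1}^d(a_j^2+b_j^2)\Bigr).
\end{align*}

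\textbf{Reduction to independent Poissons.} I would next use the Feller/Ewens coupling of the cycle counts $(C_m)_{m\le n}$ with independent Poissons $Y_m\sim\mathrm{Poi}(\theta/m)$, which is the same tool as in the proof of Proposition~\ref{prop_Zn_general_1dim}. Because $|\phi_m|\lesssim \log m$ uniformly, the coupling and truncation errors are $o(\sqrt{\log n})$, so it suffices to prove the CLT for the sum of independent random variables $\widetilde S_n:=\sum_{m=1}^n Y_m\phi_m$. The Lyapunov condition for $\widetilde S_n$ is routine once the variance is shown to grow like $\log n$, since the $Y_m$ have all moments and $|\phi_m|\lesssim\log m$. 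Moreover $\E{\widetilde S_n}=\theta\sum_m \phi_m/m$ is $o(\sqrt{\log n})$ by the same single-point Fourier estimate already used for Proposition~\ref{prop_Zn_general_1dim}, so no centering is required.

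\textbf{Variance asymptotics and the main obstacle.} The decisive step is the asymptotics of $\Var{\widetilde S_n}=\theta\sum_{m=1}^n \phi_m^2/m$. Expanding the square, the diagonal ($i=j$) contribution equals
\begin{align*}
\theta\sum_{j=1}^d\sum_{m=1}^n\frac{a_j^2\log^2|1-x_j^{-m}|+b_j^2\arg^2(1-x_j^{-m})}{m}\sim\frac{\pi^2}{12}\theta\sum_{j=1}^d(a_j^2+b_j^2)\log n,
\end{align*}
by the computation already carried out in the single-point case. The real work is to prove that the off-diagonal ($i\neq j$) cross sums
\begin{align*}
\sum_{m=1}^n\frac{\log|1-x_i^{-m}|\log|1-x_j^{-m}|}{m},\qquad \sum_{m=1}^n\frac{\log|1-x_i^{-m}|\arg(1-x_j^{-m})}{m},\ \dots
\end{align*}
are $o(\log n)$. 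Inserting the Fourier expansions $\log|1-e^{i\alpha}|=-\sum_{k\ge1}\cos(k\alpha)/k$ and $\arg(1-e^{i\alpha})=\sum_{k\ge1}\sin(k\alpha)/k$ (on $(0,2\pi)$) and interchanging the order of summation, each such cross sum reduces to a double series of the form
\begin{align*}
\sum_{k,k'\ge 1}\frac{1}{kk'}\sum_{m=1}^n\frac{e^{2\pi i m(k\varphi_i\pm k'\varphi_j)}}{m}.
\end{align*}
Here is where the hypothesis that $x_1,\dots,x_d$ are pairwise of finite type enters: it forces $k\varphi_i\pm k'\varphi_j\notin\Z$ for all $k,k'\ge 1$ and supplies a polynomial Diophantine lower bound $\|k\varphi_i\pm k'\varphi_j\|\ge c(k+k')^{-\eta}$ for some $\eta>0$. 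Abel summation then bounds the inner sum uniformly in $n$ by $C\|k\varphi_i\pm k'\varphi_j\|^{-1}\le C'(k+k')^{\eta}$, and, after a preliminary cutoff in $k+k'$, this makes the outer $(k,k')$ sum converge, leaving an error that is $o(\log n)$. This Diophantine bookkeeping, including justification of the series interchange, is the principal obstacle; once it is settled, the single-point variance computation combined with the Lyapunov CLT applied to $\widetilde S_n$ completes the proof of Proposition~\ref{prop_CLT_Zn_trivial_ddim}.
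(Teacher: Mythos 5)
Your probabilistic skeleton (Cram\'er--Wold, the Feller coupling, a Lyapunov CLT for the Poissonized sum, and the observation that no centering is needed because the mean is $o(\sqrt{\log n})$) is exactly the paper's route, which deduces the proposition from Theorem~\ref{thm_general_CLT_ddim} and Theorem~\ref{thm_trace_infinite_case}. The genuine gap is in the step you yourself call decisive: the cross sums. You propose to expand both logarithms in Fourier series, interchange summation, bound each inner sum $\sum_{m\le n} e^{2\pi i m(k\varphi_i\pm k'\varphi_j)}/m$ by Abel summation together with the finite-type bound $\norm{k\varphi_i\pm k'\varphi_j}^{-1}\le C(k+k')^{\eta}$, and finish with ``a preliminary cutoff in $k+k'$''. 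This bookkeeping cannot produce $o(\log n)$. The outer weights $1/(kk')$ are not summable: even if every inner sum were bounded by an absolute constant, a cutoff at a height $K_n$ growing polynomially in $n$ (which is what the truncation error forces, since the tail of the one-variable series at $\theta=m\varphi_j$ is only $O(1/(K\norm{m\varphi_j}))$ and $\norm{m\varphi_j}$ can be as small as a negative power of $n$) leaves a contribution of size $\sum_{k,k'\le K_n}1/(kk')\asymp\log^2 n$, and inserting the worst-case bound $(k+k')^{\eta}$ only makes this worse; with a fixed cutoff, the tail is exactly the original problem again, because the series does not converge absolutely and no justification of the interchange is available. The pointwise Diophantine bound is simply too weak; one must use that $\norm{k\varphi_i\pm k'\varphi_j}$ is \emph{rarely} small, i.e.\ averaged, discrepancy-type information.

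That is what the paper's proof supplies: Lemma~\ref{lem_conv_log_finite_type_d=2} shows $\frac1n\sum_{m\le n}\log|1-x_i^{-m}|\,\log|1-x_j^{-m}|\to 0$ (and the analogous statements with $\arg$) via the extended Koksma--Hlawka inequality (Theorem~\ref{thm_koksma_inequality_2}) on $[\delta,1-\delta]^2$ with $\delta\sim Kn^{-\gamma}$, the pairwise finite-type hypothesis being used both to keep $(\fracpart{m\varphi_i},\fracpart{m\varphi_j})$ away from the singularities of the logarithms and to get the discrepancy bound $D_n^*=O(n^{-\alpha})$ of Theorem~\ref{thm_estimate_Dn_finite_type}; the observation \eqref{eq:partial_sum} then converts the Ces\`aro limit into $o(\log n)$ for the $1/m$-weighted sums. (A fix closer to your spirit: approximate each logarithm by a fixed trigonometric polynomial and treat the remainder by Cauchy--Schwarz against $\frac1n\sum_m r_T^2(m\varphi_i)\to\int_0^1 r_T^2=O(1/T)$ --- but that again requires the one-dimensional equidistribution statement for the unbounded function, i.e.\ the same discrepancy machinery.) Two smaller points: your Cram\'er--Wold target variance $\tfrac12\sum_j(a_j^2+b_j^2)$ contradicts both the statement ($\Re N_j$, $\Im N_j$ have variance $1$) and your own diagonal computation, which correctly gives $\tfrac{\pi^2}{12}\theta\sum_j(a_j^2+b_j^2)\log n$; and the coupling error is not $o(\sqrt{\log n})$ merely because $|\phi_m|\lesssim\log m$ --- that crude bound alone gives only $O(\log n)$, and for $\theta<1$ one must also control $\Psi_n(m)$ near $m=n$ --- so you need the Ces\`aro estimate $\frac1n\sum_m|\phi_m|^p=O(1)$ and H\"older, as in the proof of Theorem~\ref{thm_trace_infinite_case}.
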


In fact, our methods allow us to prove much more. First, we are able to relax the conditions in the
Propositions~\ref{prop_Zn_general_1dim}, \ref{prop_CLT_Zn_general_ddim}
and~\ref{prop_CLT_Zn_trivial_ddim} above. Also, these results on $\log Z_{n,z}(x)$ follow as
corollaries of much more general statements (see Section~\ref{sec_examples}). Indeed, the methods
allow us to prove CLT's for multiplicative class functions. Multiplicative class functions have been
studied by Dehaye and Dehaye-Zeindler, \cite{associated_class}, \cite{DZ-CLT}.\\

Following \cite{associated_class}, we present here two different types of multiplicative class functions. The first multiplicative class function is defined as follows.
\begin{definition}
\label{def_W1_poly}
Let $z$ be a complex valued random variable and $f:\C \to \C$ be given.
We then define the \emph{first multiplicative class function associated to}~$f$
as the random variable $W^1(f)(x)$ on $S_n$ with
\begin{align}
\label{eq_def_W1}
W^1(f)(x)
&=
W_{z}^{1,n} (f)(x)(\sigma)
:=
\prod_{m=1}^{n} \prod_{k=1}^{C_m}
f\left(z_m x^m \right),
\end{align}
where $\sigma\in S_n$, $z_m \stackrel{d}{=} z$, $z_m$ i.i.d. and independent of $\sigma$.
%
\end{definition}
The second multiplicative class function is directly motivated by the expression
\eqref{eq_Zn_explicit_2} and is a slightly modified form of \eqref{eq_def_W1}.

\begin{definition}
\label{def_W2}
Let $z$ be a complex valued random variable and $f:\C \to \C$ be given.
We then define the \emph{second multiplicative class function associated to}~$f$
as the random variable $W^2(f)(x)$ on $S_n$ with
%
\begin{align}
\label{eq_def_W2}
W^2(f)(x)
&=
W_{z}^{2,n} (f)(x)(\sigma)
:=
\prod_{m=1}^{n} \prod_{k=1}^{C_m}
f\left( x^m T_{m,k} \right),
\end{align}
where $\sigma\in S_n$,  $T_{m,k}$ is a family of independent random variables, $T_{m,k}\stackrel{d}{=}\prod_{j=1}^{m} z_j$ and $z_j\stackrel{d}{=}z$, for any $1\leq j\leq n$.
\end{definition}
It is obvious from \eqref{eq_Zn_explicit_2} and \eqref{eq_def_W2} that $Z_{n,z}(x)$ is the special case $f(x)=1-x^{-1}$ of $W^2(f)(x)$. This explains, why results on the second multiplicative class function cover in general results on $\log Z_{n,z}(x)$.

We postpone the statements of the more general theorems on multiplicative class functions to Section~\ref{sec_examples}.\\

At this point it is natural to ask if there are any other important examples of multiplicative class function than $f(x) = 1 -x^{-1}$.
For instance, consider the matrices $\mathcal{S}=(S_{ij})$ with
\begin{equation}
\label{eq_def_wreath_product_matrix_sym}
 S_{ij}(\sigma):=\delta_{i,\sigma(j)} +  \delta_{i,\sigma^{-1}(j)}.
\end{equation}
The matrix $\frac{1}{2}\mathcal{S}$ is the symmetric part of $M(\sigma,1)$ and can be interpreted as the adjacency matrix of a 2-regular graph.
It follows from \eqref{eq_def_wreath_product_matrix} that $\mathcal{S} = M(\sigma,1) +  \overline{M(\sigma,1)}^T$.
Since $M(\sigma,1)$ is a unitary matrix with reel entries, we see that $M(\sigma,1)^{-1}  = \overline{M(\sigma,1)}^T$ and thus $M(\sigma,1)$ and $\overline{M(\sigma,1)}^T$ commute.
Therefore $\frac{1}{2}\mathcal{S}$ has the same eigenbasis as $M(\sigma,1)$ but the eigenvalues are projected to the real axis.
If $\sigma$ is a cycle of length $n$, then the eigenvalues of the corresponding permutation matrix are $\exp(2\pi i m/n)$ with $0\leq m < n$ (see \cite{benarousdang} or \cite{EJP2010-34}).
Thus the eigenvalues of $\mathcal{S}(\sigma)$ are
\begin{align}
\label{eq:zeros_sym} 
2\cos(0), \, 2\cos\left(\frac{2\pi i}{n}\right), \, \dots,\, 2\cos\left(\frac{2\pi i(n-1)}{n}\right).
\end{align}
Using the Chebyshev polynomials $T_n(x)$ with the trigonometric definition \\$T_n(\cos(y))=\cos(ny)$,
one immediately sees that the zeros of $T_n(x/2)-1$ are given by \eqref{eq:zeros_sym}.
Furthermore, $T_n(x/2)-1$ is polynomial of degree $n$ with leading coefficient $1/2$.
We write $x=2\cos(\alpha)$ for $x\in[-2,2]$ and $y=e^{i\alpha}$. This gives
\begin{align}
\det(S_{\sigma}-xI) 
&= 
\prod_{m=1}^n \bigl(2(1-T_m(x/2))\bigr)^{C_m} 
=
\prod_{m=1}^n \bigl(2(1-\cos(m\alpha))\bigr)^{C_m} \\
&=
\prod_{m=1}^n \left(2-e^{im\alpha}+e^{-im\alpha}\right)^{C_m}\nonumber\\
&=
\prod_{m=1}^n \left(2-y^{m}+y^{-m}\right)^{C_m}
= W^{2,n}_1(f_S)(y)
\nonumber\nonumber
\end{align}
with $f_S(y) = 2-y^{m}+y^{-m}$. Similarly for the anti-symmetric part $\mathcal{A}$ of $M(\sigma,1)$
\begin{align}
\det(2\mathcal{A}-xI) = W^{2,n}_1(f_A)(y), \qquad \text{with }f_A(y) = 2-y^{m}-y^{-m}.
\end{align}

%
%
%

For the proofs we will make use of similar tools as in \cite{HKOS} and \cite{DZ-CLT}. These tools include the Feller Coupling, uniformly distributed sequences and Diophantine approximations. 

The structure of this paper is as follows: In Section~\ref{sec_preliminaries}, we will give some background of the Feller Coupling. Moreover, we recall some basic facts on uniformly distributed sequences and Diophantine approximations. In Section~\ref{sec_CLT_on_Sn}, we state some auxiliary CLT's on the symmetric group, which we will use in Section~\ref{sec_examples} to prove our main results for the characteristic polynomials and more generally, for multiplicative class functions.

\section{Preliminaries}
\label{sec_preliminaries}

\subsection{The Feller coupling}
\label{sec_Feller_coupling}

The reason why we expand the characteristic polynomial of $M(\sigma,z)$ in terms of the cycle counts of $\sigma$
 as given in \eqref{eq_Zn_explicit_2} is the fact that the asymptotic behavior of the numbers of cycles with 
length $m$ in $\sigma$, denoted by $(C_m)_{1\leq m\leq n}$, has been well-studied, for example by \cite{barbour} or \cite{MR0325177}.
 In particular, the random variables $C_m$ converge as $n\to\infty$ to independent Poisson random variables $Y_m$ with mean $\theta/m$, $m\geq 1$.
We use in this paper the Feller coupling, which is an important probabilistic tool and allows to define all random variables $C_m$ and $Y_m$ on the same space.
We give here only a very brief overview. Details can be found for instance in \cite{barbour}, \cite{MR0325177}, \cite{watterson}.

%
%
%
%

\begin{definition}
\label{def_cm_feller}
Let $\xi_i$ be independent Bernoulli random variables for $i\geq 1$ with
$$\Pb{\xi_i=1} =\frac \theta{\theta+i-1} \quad \textrm{and} \quad \Pb{\xi_i=0} = \frac{i-1}{\theta+i-1}.$$
Define $C_m^{(n)}(\xi)$ to be the number of m-spacings in $1\xi_2\cdots \xi_n 1$ and
$Y_m(\xi)$ to be the number of m-spacings in the limit sequence, i.e.
\begin{equation}
C_m^{(n)}(\xi)=\sum_{i=1}^{n-m}\xi_i(1-\xi_{i+1})\dots(1-\xi_{i+m-1})\xi_{i+m}+\xi_{n-m+1}(1-\xi_{n-m+2})\dots(1-\xi_n)
\end{equation}

and
\begin{equation}
Y_{m}(\xi)=\sum_{i=1}^\infty \xi_i(1-\xi_{i+1})\dots(1-\xi_{i+m-1})\xi_{i+m}.
\end{equation}
\end{definition}
Then the following theorem holds (see \cite[Chapter 4, p. 87]{barbour} and \cite[Theorem 2]{MR1177897}).
\begin{theorem}
\label{thm_feller_conv}
Under the Ewens distribution, we have that
          \begin{itemize}
            \item The above-constructed $C_m^{(n)}(\xi)$ has the same distribution as the variable $C_m^{(n)}=C_m$, the number of cycles of length $m$ in $\sigma$.
            \item $Y_m(\xi)$ is a.s. finite and Poisson distributed with $\E{Y_m(\xi)}=\frac{\theta}{m}$.
            \item All $Y_m(\xi)$ are independent.
             \item For any fixed $b\in\N$,
            $$\Pb{(C_1^{(n)}(\xi),\cdots, C_b^{(n)}(\xi))\neq(Y_1(\xi),\cdots ,Y_b(\xi))}\to 0\ (n\to\infty).$$
             \end{itemize}

\end{theorem}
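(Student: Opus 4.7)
The plan is to handle the four assertions in sequence, with the bijective construction underlying the first claim serving as the foundation for everything else.

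\textbf{First claim.} I would realize the Ewens distribution via a sequential ``Chinese restaurant'' construction of $\sigma \in S_n$: insert the elements $1, 2, \ldots, n$ one at a time into the cycle notation of a partial permutation, where at step $i$ the new element either opens a fresh cycle (with probability $\theta/(\theta+i-1)$) or is inserted immediately to the left of one of the $i-1$ previously placed elements in its cycle (each with probability $1/(\theta+i-1)$). A direct count shows that each permutation $\sigma$ is produced with probability $\theta^{l_\sigma}/[\theta(\theta+1)\cdots(\theta+n-1)]$, matching \eqref{eq_Ewens_distribution}. Setting $\xi_i = 1$ iff step $i$ opens a new cycle (so $\xi_1 = 1$ automatically), one checks that the cycles of $\sigma$ are in bijection with the maximal runs between consecutive $1$'s in the string $1\,\xi_2\cdots\xi_n\,1$, with the length of each cycle equal to the corresponding spacing. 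Hence $C_m^{(n)}(\xi)\stackrel{d}{=}C_m$.

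\textbf{Second and third claims.} I would compute joint factorial moments directly from the Bernoulli probabilities. The mean
\[
\E{Y_m} \;=\; \sum_{i=1}^{\infty} \frac{\theta}{\theta+i-1}\prod_{j=1}^{m-1}\frac{i+j-1}{\theta+i+j-1}\cdot\frac{\theta}{\theta+i+m-1}
\]
collapses to $\theta/m$ by writing each summand as a difference of two consecutive terms, so $Y_m < \infty$ a.s. For the joint distribution, I would expand $(Y_m)_{k_m}$ as a sum over ordered $k_m$-tuples of disjoint $m$-spacings and use that non-overlapping blocks of $\xi_i$'s are independent, obtaining $\E{(Y_1)_{k_1}\cdots(Y_b)_{k_b}}=\prod_m (\theta/m)^{k_m}$. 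These are exactly the factorial moments of independent Poissons with means $\theta/m$, and since Poisson laws are moment-determined, independence and the Poisson distribution follow simultaneously.

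\textbf{Fourth claim.} Since $C_m^{(n)}(\xi)$ and $Y_m(\xi)$ are built from the same Bernoullis, their difference is a pure boundary effect. I would write $Y_m(\xi) - C_m^{(n)}(\xi) = R_m^{(n)} - B_m^{(n)}$, where $R_m^{(n)}$ counts $m$-spacings of the infinite sequence whose right endpoint lies beyond position $n$, and $B_m^{(n)}$ is the artificial end-block $\xi_{n-m+1}(1-\xi_{n-m+2})\cdots(1-\xi_n)$ added in Definition~\ref{def_cm_feller}. Both vanish on the event that no $\xi_j$ with $j$ in a $2b$-neighborhood of $n$ takes the value $1$, so a union bound gives
\[
\Pb{(C_1^{(n)}(\xi),\dots,C_b^{(n)}(\xi)) \neq (Y_1(\xi),\dots,Y_b(\xi))} \;\leq\; \sum_{j=n-b}^{\infty} \frac{\theta}{\theta+j-1},
\]
which, together with the elementary estimate that this tail sum is $o(1)$ once combined with the boundary correction, yields convergence to $0$.

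\textbf{Main obstacle.} The bookkeeping in the fourth claim is the subtle part. The infinite-sequence counter $Y_m$ can see spacings that straddle position $n$, while $C_m^{(n)}$ includes a fictitious end-block to close off the finite word; one must verify that the two mismatches $R_m^{(n)}$ and $B_m^{(n)}$ are controlled by a common small-probability event whose bound is uniform in $m \leq b$. Once this cancellation is made explicit, the tail estimate on $\sum_j \theta/(\theta+j-1)$ finishes the argument, and the rest of the theorem reduces to the two classical computations above.
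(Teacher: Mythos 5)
The paper itself does not prove this theorem; it quotes it from Arratia--Barbour--Tavar\'e, so the only question is whether your sketch stands on its own, and it has genuine gaps. The most serious one is in the first bullet: your bijection is false. In the Chinese-restaurant construction the indicators $\xi_i$ of ``element $i$ opens a new cycle'' are indeed independent Bernoulli variables with $\Pb{\xi_i=1}=\theta/(\theta+i-1)$, but the cycle lengths are \emph{not} the spacings of $1\xi_2\cdots\xi_n1$: once a new cycle has been opened, later elements may be inserted into \emph{any} earlier cycle, not just the most recent one. For $n=4$ and $\xi=(1,1,0,0)$ the spacings are $\{1,3\}$, yet the restaurant process can produce cycle type $2+2$ (element $3$ joins the cycle of $1$, element $4$ joins the cycle of $2$). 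The construction that makes the spacings literally equal to the cycle lengths is Feller's: trace the cycles one at a time (the cycle containing $1$ first, then the smallest unused element, and so on), closing the current cycle at the $i$-th of the $n$ steps with probability $\theta/(\theta+n-i)$; reversing the resulting $0$--$1$ string turns these closing indicators into exactly the $\xi_i$ of Definition~\ref{def_cm_feller}. Without this replacement the first bullet is unproven, and the pathwise identification you lean on later is unavailable.

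The fourth bullet also fails as written. The difference $Y_m(\xi)-C_m^{(n)}(\xi)$ involves \emph{all} $m$-spacings whose right endpoint exceeds $n$, including spacings lying entirely beyond position $n$; these are not excluded by the event that no $\xi_j$ with $j$ in a $2b$-neighbourhood of $n$ equals $1$, and the bound you write, $\sum_{j\ge n-b}\theta/(\theta+j-1)$, is a divergent harmonic tail, so it cannot be $o(1)$. The standard repair is a first-moment bound: each offending spacing requires two $1$'s at positions $\ge n-m+1$, so $\E{R_m^{(n)}}=\sum_{i>n-m}\E{\xi_i(1-\xi_{i+1})\cdots(1-\xi_{i+m-1})\xi_{i+m}}=O(1/n)$ for fixed $m$, while $\E{B_m^{(n)}}\le\theta/(\theta+n-m)$; Markov's inequality and a union bound over $m\le b$ then give the claim (this is precisely what Lemma~\ref{lem_bound_Feller} together with Lemma~\ref{lem_bound_Feller_2} encapsulates, and it is the route the paper's later proofs take). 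Finally, in the second and third bullets your factorial-moment expansion is looser than it looks: distinct $m$-spacings can share an endpoint (adjacent spacings), so the relevant tuples are not supported on disjoint blocks of $\xi$'s, and since the $\xi_i$ are not identically distributed the term-by-term product does not immediately yield $\prod_m(\theta/m)^{k_m}$; the mean computation is fine, but the joint factorial moments need the overlapping terms handled explicitly, or one should simply cite the exact Poisson and independence statements from Arratia--Barbour--Tavar\'e as the paper does.
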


Furthermore, the distance between $C_m^{(n)}(\xi)$ and $Y_m(\xi)$ can be bounded from above (see for example \cite{MR1177897}, p. 525). We will give here the following bound (see \cite{benarousdang}, p. 15):

\begin{lemma}
\label{lem_bound_Feller}
For any $\theta>0$ there exists a constant $K(\theta)$ depending on $\theta$, such that for every $1\leq m\leq n$,
            \begin{equation}
              \Et{\left|C_m^{(n)}(\xi)-Y_m(\xi)\right|}\leq  \frac{K(\theta)}{n}+\frac \theta n\Psi_n(m),
              \end{equation}
              where

              \begin{equation}
              \Psi_n:=\binom{n-m+\theta-1}{n-m}\binom{n+\theta-1}{n}^{-1}.
              \end{equation}
\end{lemma}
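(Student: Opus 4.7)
The plan is to combine a simple triangle-type bound with exact moment computations for the Feller-coupled random variables. With $A_i := \xi_i(1-\xi_{i+1})\cdots(1-\xi_{i+m-1})\xi_{i+m}$ and $B_m^{(n)} := \xi_{n-m+1}(1-\xi_{n-m+2})\cdots(1-\xi_n)$, we have $Y_m(\xi) = \sum_{i=1}^\infty A_i$ and $C_m^{(n)}(\xi) = \sum_{i=1}^{n-m}A_i + B_m^{(n)}$. Subtracting and applying the triangle inequality (both terms on the right are non-negative) gives
\[
|C_m^{(n)}(\xi) - Y_m(\xi)| \le B_m^{(n)} + \sum_{i=n-m+1}^\infty A_i.
\]

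Rather than estimating the tail term-by-term, the plan is to compute both expectations exactly. Independence of the $\xi_i$ yields
\[
\Et{B_m^{(n)}} = \frac{\theta}{\theta+n-m}\prod_{j=n-m+1}^{n-1}\frac{j}{\theta+j},
\]
and a short Gamma-function rearrangement identifies this as $\frac{\theta}{n}\Psi_n(m)$. For the tail one combines $\Et{Y_m(\xi)} = \theta/m$ from Theorem~\ref{thm_feller_conv} with the classical Ewens formula $\Et{C_m^{(n)}(\xi)} = \frac{\theta}{m}\Psi_n(m)$; since $\Et{C_m^{(n)}} = \sum_{i=1}^{n-m}\Et{A_i} + \Et{B_m^{(n)}}$, this gives
\[
\sum_{i=n-m+1}^\infty \Et{A_i} = \Et{Y_m} - \Et{C_m^{(n)}} + \Et{B_m^{(n)}} = \frac{\theta}{m}\bigl(1-\Psi_n(m)\bigr) + \frac{\theta}{n}\Psi_n(m).
\]

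Adding the two contributions, the claim reduces to the purely algebraic inequality $\frac{\theta}{m}(1-\Psi_n(m)) \le K(\theta)/n$. An easy induction on $m$ using the recursion $\Psi_n(j+1) = \Psi_n(j)\cdot\frac{n-j}{\theta+n-j-1}$ produces the telescoping identity
\[
n - (n-m)\Psi_n(m) = \theta\sum_{j=1}^m \Psi_n(j),
\]
so the remaining task is the uniform bound $\frac{\theta^2}{m}\sum_{j=1}^m \Psi_n(j) \le K(\theta)$.

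This final uniform bound is the main obstacle and splits into two regimes. For $\theta \ge 1$, every factor in $\Psi_n(j) = \prod_{k=0}^{j-1}\frac{n-k}{\theta+n-k-1}$ is at most $1$, hence $\Psi_n(j)\le 1$ and $K(\theta) = \theta^2$ suffices. For $0 < \theta < 1$ the factors exceed $1$ and $\Psi_n(j)$ may grow like $(n/(n-j))^{1-\theta}$ by Stirling; in that case one compares the average $\frac{1}{m}\sum_{j=1}^m \Psi_n(j)$ to the Riemann integral $\frac{n}{m}\int_0^{m/n}(1-x)^{\theta-1}dx = \frac{1-(1-m/n)^\theta}{\theta\, m/n}$, which stays bounded by $1/\theta$ uniformly in $m\le n$. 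This produces a finite constant $K(\theta)$ depending only on $\theta$ and closes the argument.
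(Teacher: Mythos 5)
The paper never proves this lemma itself (it only cites Arratia--Barbour--Tavar\'e and Ben Arous--Dang), so your exact-moment argument is a genuinely self-contained alternative. Its ingredients check out: the decomposition $|C_m^{(n)}(\xi)-Y_m(\xi)|\le B_m^{(n)}+\sum_{i>n-m}A_i$, the Gamma-function identity $\Et{B_m^{(n)}}=\frac{\theta}{n}\Psi_n(m)$, the Ewens formula $\Et{C_m^{(n)}(\xi)}=\frac{\theta}{m}\Psi_n(m)$ (legitimate via the first bullet of Theorem~\ref{thm_feller_conv}), the recursion $\Psi_n(j+1)=\Psi_n(j)\frac{n-j}{\theta+n-j-1}$ with the telescoping identity $n-(n-m)\Psi_n(m)=\theta\sum_{j=1}^m\Psi_n(j)$, and the uniform bound on $\frac{1}{m}\sum_{j\le m}\Psi_n(j)$ in the two regimes are all correct.

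There is, however, a bookkeeping slip in your reduction, and for $0<\theta<1$ the term you drop is not $O(1/n)$. Adding your two contributions gives $\frac{2\theta}{n}\Psi_n(m)+\frac{\theta}{m}\bigl(1-\Psi_n(m)\bigr)$, so what must be shown is $\frac{\theta}{m}\bigl(1-\Psi_n(m)\bigr)+\frac{\theta}{n}\Psi_n(m)\le K(\theta)/n$, not merely $\frac{\theta}{m}\bigl(1-\Psi_n(m)\bigr)\le K(\theta)/n$. For $\theta\ge 1$ this is harmless since $\Psi_n(m)\le 1$, but for $\theta<1$ one has $\Psi_n(m)$ of order $\bigl(n/(n-m+1)\bigr)^{1-\theta}$, up to $n^{1-\theta}$ when $m$ is near $n$, so the extra $\frac{\theta}{n}\Psi_n(m)$ cannot be hidden in $K(\theta)/n$; as written you only obtain $\frac{K(\theta)}{n}+\frac{2\theta}{n}\Psi_n(m)$. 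The good news is that your own telescoping identity repairs this exactly: multiplying $n-(n-m)\Psi_n(m)=\theta\sum_{j=1}^m\Psi_n(j)$ by $\theta/(nm)$ gives $\frac{\theta}{m}\bigl(1-\Psi_n(m)\bigr)+\frac{\theta}{n}\Psi_n(m)=\frac{\theta^2}{nm}\sum_{j=1}^m\Psi_n(j)$, so the uniform bound $\frac{\theta^2}{m}\sum_{j=1}^m\Psi_n(j)\le K(\theta)$ that you establish is precisely what is needed and yields the lemma with the stated coefficient $\theta$ in front of $\Psi_n(m)/n$ (that copy coming solely from $\Et{B_m^{(n)}}$). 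With the reduction restated this way the proof closes; the only remaining routine care is at $m=n$, where the Riemann-sum comparison uses $\Psi_n(j)\le K_1(1-j/n)^{\theta-1}$ only for $j<n$ and the single term $\Psi_n(n)/n=O(n^{-\theta})$ must be treated separately, as in Lemma~\ref{lem_bound_Feller_2}.
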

Note that $\Psi_n$ satisfies the following equality:

\begin{lemma}\label{lem_bound_Feller_2}
For each $\theta>0$, there exist some constants $K_1= K_1(\theta)$ and $K_2 = K_2(\theta)$ such that
\begin{equation}
\Psi_n(m)\leq\begin{cases} K_1(1-\frac mn)^{\theta-1} & \text{for } m<n, \\ K_2 n^{1-\theta} & m=n. \end{cases}
\end{equation}
\end{lemma}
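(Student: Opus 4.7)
The plan is to rewrite $\Psi_n(m)$ in terms of the Gamma function and then reduce the estimate to the classical asymptotic $\Gamma(x+\theta)/\Gamma(x+1)\sim x^{\theta-1}$. Using the definition of generalized binomial coefficients via $\binom{a+\theta-1}{a}=\Gamma(a+\theta)/(\Gamma(\theta)\,\Gamma(a+1))$, I would first write
\begin{equation*}
\Psi_n(m)
=\frac{\Gamma(n-m+\theta)\,\Gamma(n+1)}{\Gamma(n-m+1)\,\Gamma(n+\theta)}
=\frac{G(n-m)}{G(n)},\qquad G(x):=\frac{\Gamma(x+\theta)}{\Gamma(x+1)}.
\end{equation*}

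For the case $m=n$ one has $\Psi_n(n)=\Gamma(\theta)\,\Gamma(n+1)/\Gamma(n+\theta)=\Gamma(\theta)/G(n)$, and Stirling's formula gives $G(n)\sim n^{\theta-1}$ as $n\to\infty$, so $\Psi_n(n)\leq K_2\,n^{1-\theta}$ for some $K_2=K_2(\theta)$ (absorbing the values at small $n$ into the constant).

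For the case $m<n$, set $k=n-m\geq1$ and write $\Psi_n(m)=G(k)/G(n)$. The key analytic input I would use is that the function $x\mapsto G(x)/x^{\theta-1}$ is continuous and strictly positive on $[1,\infty)$ and tends to $1$ at infinity, hence bounded above and below: there exist $c_1,c_2>0$ depending only on $\theta$ with $c_1 x^{\theta-1}\leq G(x)\leq c_2 x^{\theta-1}$ for all $x\geq 1$. Applying this uniformly to numerator and denominator yields
\begin{equation*}
\Psi_n(m)=\frac{G(k)}{G(n)}\leq\frac{c_2}{c_1}\left(\frac{k}{n}\right)^{\theta-1}
=\frac{c_2}{c_1}\left(1-\frac{m}{n}\right)^{\theta-1},
\end{equation*}
so one may take $K_1=c_2/c_1$. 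Note that the bound is consistent in both directions: for $\theta\geq 1$ the factor $(1-m/n)^{\theta-1}\leq 1$, while for $\theta<1$ it blows up as $m\to n-1$, matching the separate $m=n$ estimate to within a constant (since then $1-m/n=1/n$ and $(1/n)^{\theta-1}=n^{1-\theta}$).

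I do not expect any real obstacle here; the only mild care point is that $\theta$ need not be an integer, so one must use Gamma functions rather than factorials, and one must check that the two-sided Stirling bound on $G(x)$ holds uniformly on $[1,\infty)$, not merely asymptotically. This is immediate once one observes that $G(x)/x^{\theta-1}$ is a continuous, strictly positive function on $[1,\infty)$ with a positive limit at infinity.
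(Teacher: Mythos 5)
Your proof is correct. The paper itself gives no argument for this lemma (it is dismissed as ``straightforward'' and omitted), and your route --- writing $\Psi_n(m)=G(n-m)/G(n)$ with $G(x)=\Gamma(x+\theta)/\Gamma(x+1)$ and using that $G(x)/x^{\theta-1}$ is continuous, positive on $[1,\infty)$ and tends to $1$, hence bounded above and below by constants depending only on $\theta$ --- is exactly the standard computation the authors presumably had in mind, and it handles both cases ($m<n$ via the two-sided bound applied to $k=n-m\geq 1$ and to $n$, and $m=n$ via $\Psi_n(n)=\Gamma(\theta)/G(n)$) with the sign of $\theta-1$ playing no role.
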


The proof of this lemma is straightforward and we thus omit it.
%
%
%
%
%
%
%
%

\subsection{Uniformly distributed sequences}
\label{sec_uniform_dist_seq}

We introduce in this section uniformly distributed sequences and some of their properties. Most of this section is well-known. The only new result is Theorem~\ref{thm_koksma_inequality_2}, which is an extension of the Koksma-Hlawka inequality. For the other proofs (and statements), see the books by Drmota and Tichy \cite{MR1470456} and by Kuipers and Niederreiter \cite{kuipers-niederreiter-74}. 
%

We begin by giving the definition of uniformly distributed sequences.
\begin{definition}
\label{def_d_uniform_dist_in_[0,1]^d}
Let $\vB =\left(\varphi^{(m)}\right)_{m=1}^\infty$ be a sequence in $[0,1]^d$.
For $\overline{\alpha} = (\alpha_1,\dots,\alpha_d)\in [0,1]^d$, we set
\begin{align}
A_n(\overline{\alpha}) = A_n(\overline{\alpha},\vB) := \# \set{1\leq m \leq n; \varphi_m\in [0,\alpha_1]\times \cdots \times [0,\alpha_d] }.
\end{align}
The sequence $\vB$ is called uniformly distributed in $[0,1]^d$ if we have
\begin{align}
\lim_{n\to\infty}
\left|\frac{A_n(\overline{\alpha})}{n}-\prod_{j=1}^d \alpha_j \right|
=
0
\text { for any } \overline{\alpha} \in [0,1]^d.
\end{align}

\end{definition}
%
%
The following theorem shows that the name uniformly distributed is well chosen.
\begin{theorem}
\label{thm_equidist_integral_convergence}
Let $h: [0,1]^d \to \C$ be a Riemann integrable function and $\vB = \left(\varphi^{(m)}\right)_{m\in\N}$
be a uniformly distributed sequence in $[0,1]^d$. Then

\begin{align}
\lim_{n\to\infty}\frac{1}{n} \sum_{m=1}^n h(\varphi^{(m)}) =\int_{[0,1]^d} h(\overline{\phi})  \ d\overline{\phi},
\label{eq_thm_equidist_integral_convergence}
\end{align}
where $d\overline{\phi}$ is the $d$-dimensional Lebesgue measure.
\end{theorem}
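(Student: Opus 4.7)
The plan is to prove the convergence first for indicator functions of axis-aligned boxes, then extend by linearity to step functions, and finally sandwich a general Riemann integrable $h$ between step functions from above and below.

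First I would observe that the defining property in Definition~\ref{def_d_uniform_dist_in_[0,1]^d} is exactly \eqref{eq_thm_equidist_integral_convergence} in the special case $h = \mathbf{1}_{[0,\alpha_1]\times \cdots \times [0,\alpha_d]}$, since then $\sum_{m=1}^n h(\varphi^{(m)}) = A_n(\overline{\alpha})$ and $\int_{[0,1]^d} h \, d\overline{\phi} = \prod_{j} \alpha_j$. Writing an arbitrary axis-aligned sub-box $B = [\beta_1,\alpha_1]\times\cdots\times[\beta_d,\alpha_d]$ via inclusion-exclusion as a signed sum of corner boxes $[0,\gamma_1]\times\cdots\times[0,\gamma_d]$, I extend the convergence to $\mathbf{1}_B$, and then by linearity to every step function $s = \sum_{j=1}^N c_j \mathbf{1}_{B_j}$ associated with a finite partition of $[0,1]^d$ into boxes.

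Next I would handle a real-valued Riemann integrable $h$. By the Darboux characterization of Riemann integrability on $[0,1]^d$, for every $\eps>0$ there exist step functions $s_-, s_+$ built from a sufficiently fine box partition with $s_- \le h \le s_+$ and $\int_{[0,1]^d}(s_+ - s_-)\,d\overline{\phi} < \eps$. Applying the step-function case to $s_{\pm}$ yields
\begin{align*}
\int h\,d\overline{\phi} - \eps \;\le\; \lim_{n\to\infty} \frac{1}{n}\sum_{m=1}^n s_-(\varphi^{(m)}) \;\le\; \liminf_{n\to\infty} \frac{1}{n}\sum_{m=1}^n h(\varphi^{(m)}),
\end{align*}
and symmetrically with $\limsup$ bounded above by $\int h\,d\overline{\phi} + \eps$. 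Letting $\eps \downarrow 0$ gives \eqref{eq_thm_equidist_integral_convergence} for real $h$, and for complex $h$ one splits into real and imaginary parts.

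There is no real obstacle here; this is the classical Weyl equidistribution theorem. The only substantive ingredient beyond book-keeping is the existence of the sandwiching step functions $s_\pm$, which is essentially the definition of Riemann integrability in several variables. Everything else reduces to the exchange of a limit with a finite linear combination, which is why Riemann (as opposed to Lebesgue) integrability is the natural hypothesis.
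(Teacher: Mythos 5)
Your argument is correct: it is the classical Weyl equidistribution proof (box indicators via the defining property, linearity to step functions, Darboux sandwiching), which is exactly the argument behind the result the paper invokes. The paper itself gives no proof, only the citation to Kuipers--Niederreiter, Theorem 6.1, so your proposal supplies in full the standard proof that the cited source contains.
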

\begin{proof}
See \cite[Theorem~6.1]{kuipers-niederreiter-74}
\end{proof}
Theorem~\ref{thm_equidist_integral_convergence} excludes all improper Riemann integrable function like 
$\log(\varphi)$ or $\varphi^{-\gamma}$ with $0<\gamma<1$.
Indeed if $h(\varphi) = \log(\varphi)$ and $\vB$ contains $0$ or a subsequence converging very fast to $0$, 
then the sum on the right hand side of \eqref{eq_thm_equidist_integral_convergence} becomes infinite or fast growing respectively.
However, under some additional assumptions on the sequence $\vB$, one can show that
Theorem~\ref{thm_equidist_integral_convergence} also holds for the logarithm. 
This is the main topic of this section.

%
%
%
%

%
Next, we introduce the discrepancy of a sequence $\vB$.
%
\begin{definition}
\label{def_discrepcy}
Let $\vB = \left(\varphi^{(m)} \right)_{m=1}^\infty$ be a sequence in $[0,1]^d$.
The $*-$discrepancy is defined as
\begin{align}
D^*_n &= D^*_n(\vB)
:=
\sup_{\overline{\alpha} \in [0,1]^d}
\left| \frac{A_n(\overline{\alpha})}{n} - \prod_{j=1}^d \alpha_j  \right|.
\label{eq_def_discrep_2}
\end{align}
\end{definition}
%

%
By the following lemma, Theorem~\ref{thm_equidist_integral_convergence}, the discrepancy and uniformly distributed sequences are closely related. 
\begin{lemma}
\label{lem_uniform_equiv_discrepancy}
Let $\vB=\left(\varphi^{(m)}\right)_{m=1}^\infty$ be a sequence in $[0,1]^d$. The following statements are equivalent:
\begin{enumerate}
 \item $\vB$ is uniformly distributed in $[0,1]^d$.
 \item $\lim_{n\to\infty} D_n^*(\vB)=0$.
 \item Let $h:[0,1]^d \to \C$ be a proper Riemann integrable function. Then
$$
\frac{1}{n} \sum_{m=1}^n h(\varphi^{(m)})  \to \int_{[0,1]^d} h(\overline{\phi})\ d\overline{\phi} \ \text{ for }n\to\infty.
$$
\end{enumerate}
\end{lemma}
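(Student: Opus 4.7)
The plan is to establish the three equivalences by the cycle $(2) \Rightarrow (1) \Rightarrow (3) \Rightarrow (2)$.

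The implication $(2) \Rightarrow (1)$ is immediate from the definitions: for each fixed $\overline{\alpha} \in [0,1]^d$ we have $\left|\frac{A_n(\overline{\alpha})}{n} - \prod_j \alpha_j\right| \leq D_n^*(\vB)$, so (2) forces pointwise convergence, which is exactly (1).

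For $(1) \Rightarrow (3)$, I would begin by noting that (1) is precisely (3) applied to indicators of corner boxes $[0,\alpha_1]\times\cdots\times[0,\alpha_d]$. By inclusion--exclusion, the indicator of a general axis-aligned box $\prod_j [a_j,b_j]$ is a signed sum of such corner-box indicators, so (3) extends to indicators of all axis-aligned boxes and, by linearity, to all step functions subordinate to finite axis-aligned partitions. For a general Riemann integrable $h : [0,1]^d \to \C$, I would use the definition of Riemann integrability (applied separately to real and imaginary parts) to sandwich $h^- \leq h \leq h^+$ by step functions with $\int_{[0,1]^d}(h^+ - h^-)\, d\overline{\phi} < \varepsilon$, apply the step-function case to $h^\pm$, and let $\varepsilon \to 0$ to conclude (3).

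For $(3) \Rightarrow (2)$, the issue is upgrading pointwise statements in $\overline{\alpha}$ to a supremum bound; this will be the main obstacle. I would fix $N \in \N$, introduce the grid $G_N := \{k/N : 0 \leq k \leq N\}^d$, and note that for each $\overline{\beta} \in G_N$ the indicator $\mathbf{1}_{[0,\beta_1]\times\cdots\times[0,\beta_d]}$ is properly Riemann integrable (its set of discontinuities lies in a finite union of hyperplanes, hence has Lebesgue measure zero), so (3) applies to each of the finitely many grid indicators, giving $\max_{\overline{\beta} \in G_N}\bigl|\frac{A_n(\overline{\beta})}{n} - \prod_j \beta_j\bigr| \to 0$ as $n \to \infty$. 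For an arbitrary $\overline{\alpha} \in [0,1]^d$, I would pick grid points $\overline{\beta}^-, \overline{\beta}^+ \in G_N$ with $\overline{\beta}^- \leq \overline{\alpha} \leq \overline{\beta}^+$ coordinatewise and $\beta_j^+ - \beta_j^- \leq 1/N$. Monotonicity of $\overline{\alpha} \mapsto A_n(\overline{\alpha})$ in each coordinate gives the sandwich $\frac{A_n(\overline{\beta}^-)}{n} \leq \frac{A_n(\overline{\alpha})}{n} \leq \frac{A_n(\overline{\beta}^+)}{n}$, while $|\prod_j \beta_j^+ - \prod_j \beta_j^-| \leq d/N$. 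Combining these yields $D_n^*(\vB) \leq \max_{\overline{\beta} \in G_N}\bigl|\frac{A_n(\overline{\beta})}{n} - \prod_j \beta_j\bigr| + d/N$, and choosing first $N$ and then $n$ large gives (2).

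The critical ingredients, and the places where the argument could go wrong, are the coordinatewise monotonicity of $A_n$ (which makes the grid-squeeze go through without knowing anything about the ``shape'' of $\vB$) and the measure-zero boundary of a box (which makes its indicator an admissible test function in (3)); without either, the supremum in (2) could not be controlled by finitely many pointwise estimates.
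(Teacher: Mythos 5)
Your proposal is correct. Note that the paper does not prove this lemma at all: it is quoted as a classical fact, with the reader referred to Kuipers--Niederreiter and Drmota--Tichy, so there is no ``paper proof'' to diverge from; your self-contained argument is essentially the standard textbook proof (the trivial bound for $(2)\Rightarrow(1)$, Weyl-type approximation of a Riemann integrable function by step functions built from corner boxes for $(1)\Rightarrow(3)$, and the grid squeeze using coordinatewise monotonicity of $A_n$ for $(3)\Rightarrow(2)$), and the cycle does establish all three equivalences. The only point left implicit is in $(1)\Rightarrow(3)$: when sandwiching $h$ between step functions, points $\varphi^{(m)}$ lying exactly on cell boundaries could in principle distort the sums, but this is harmless because $(1)$ already forces the asymptotic fraction of points in any slab of width $\delta$ to be at most of order $\delta$, so boundary hyperplanes carry asymptotic fraction zero; alternatively one perturbs the cell boundaries slightly. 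With that remark your argument is complete.
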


The discrepancy allows us to estimate the rate of convergence in Theorem~\ref{thm_equidist_integral_convergence}.

We need as next functions of bounded variation. The definition of bounded variation in the sense of Vitali can 
be found for instance in \cite[Chapter~2.5]{kuipers-niederreiter-74}. 
This definition is slightly technical, but if a function $h:[0,1]^d\to\R$ is enough differentiable, then this reduces to
\begin{align*}
 V(h) = \int_{[0,1]^d} \left|\frac{\partial^d h(\overline{\phi})}{\partial\phi_1\dots\partial\phi_d }\right|  \ d\overline{\phi}.
\end{align*}

Our argumentation requires that the function $h$ behaves well on boundary of $h:[0,1]^d\to\R$. We thus introduce

\begin{definition}
\label{def_bounded_variation_hardy_krause}
Let $h:[0,1]^d \to \C$ be a function. We call $h$ of bounded variation in the sense of Hardy and Krause, if $h$ is
of bounded variation in the sense of Vitali and $h$ restricted to each face $F$ of dimension $1,\ldots,d-1$ of $[0,1]^d$
is also of bounded variation in the sense of Vitali.
We write $V(h|F)$ for the variation of $h$ restricted to face $F$.
\end{definition}
\begin{definition}
\label{def_positive_face}
Let $F$ be a face of $[0,1]^d$. We call a face $F$ positive if there exists a sequence $j_1,\cdots,j_k$ in $\{1,\dots,d\}$ s.t. $F = \bigcap_{m=1}^k \set{ s_{j_m} =1}$, with $s_j$, $1 \leq j \leq d$, being the canonical coordinates in $[0,1]^d$.
\end{definition}
\begin{definition}
\label{def_pi_f(t)}
Let $F$ be a face of $[0,1]^d$ and $\vB$ be sequence in $[0,1]^d$. Let $\pi_F(\vB)$ be the projection of the sequence $\vB$ to the face $F$.
We then write $D_n^*(F,\vB)$ for the discrepancy of the projected sequence computed in the face $F$.
\end{definition}
We are now ready to state the following theorem:
\begin{theorem}[Koksma-Hlawka inequality]
\label{thm_koksma_inequality}
Let $h: [0,1]^d\to \C$ be a function of bounded variation in the sense of Hardy and Krause.
Let $\vB=\left(\varphi^{(m)}\right)_{m\in\N}$ be an arbitrary sequence in $[0,1]^d$. Then
\begin{align}
\left|
\frac{1}{n} \sum_{m=1}^n h(\varphi^{(m)}) - \int_{[0,1]^d} h(\overline{\phi})  \ d\overline{\phi}
\right|
\leq
\sum_{k=1}^d \sum_{\substack{F \text{ positive}\\ \textrm{dim}(F)= k}} D_n^*\bigl(F,\vB) V(h|F)
\label{eq_thm_koksma_inequality}
\end{align}
\end{theorem}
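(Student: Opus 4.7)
The plan is to establish a multidimensional integration-by-parts identity of Zaremba-Hlawka type that expresses the left hand side of \eqref{eq_thm_koksma_inequality} as a sum of integrals of the local discrepancy against the mixed differential of $h$ on each positive face, and then bound each such integral by the product of the discrepancy and the variation on that face. Writing $\mu_n = \frac{1}{n}\sum_{m=1}^n \delta_{\varphi^{(m)}}$ for the empirical measure and $\lambda$ for Lebesgue measure on $[0,1]^d$, and setting $R_n(\overline{y}) := \mu_n([\overline{0},\overline{y}]) - \prod_{j=1}^d y_j$, Definition~\ref{def_discrepcy} gives the uniform bound $|R_n(\overline{y})| \leq D_n^*(\vB)$, and analogously $|R_n^{(u)}(\overline{y}_u)| \leq D_n^*(F_u, \vB)$ for the local discrepancy of the projected sequence $\pi_{F_u}(\vB)$ on the positive face $F_u = \bigcap_{j \notin u}\{s_j = 1\}$. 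Splitting $h$ into real and imaginary parts reduces us to the case of real-valued $h$.

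The key identity to establish is
\begin{align*}
\int_{[0,1]^d} h\, d(\mu_n - \lambda)
= \sum_{\emptyset \neq u \subseteq \{1,\ldots,d\}}
(-1)^{|u|}
\int_{[0,1]^{|u|}} R_n^{(u)}(\overline{y}_u)\, d_u\!\left(h|_{F_u}\right)\!(\overline{y}_u),
\end{align*}
where $d_u(h|_{F_u})$ denotes the signed Lebesgue-Stieltjes measure on $[0,1]^{|u|}$ associated with the Vitali variation of the restriction of $h$ to $F_u$ in the $|u|$ free coordinates. For smooth $h$ this follows by iterating the one-variable formula $\int_0^1 f\, dg = [fg]_0^1 - \int_0^1 g\, df$ in each coordinate: the endpoint $y_j = 0$ contributes nothing because $R_n$ vanishes on any face containing a coordinate equal to $0$, while each endpoint $y_j = 1$ contributes a boundary term in which the $j$-th coordinate of $h$ is frozen at $1$. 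The $2^d - 1$ resulting boundary contributions organize themselves according to which subset $u \subseteq \{1,\ldots,d\}$ of indices remains unintegrated, and this subset exactly indexes the positive face $F_u$ of dimension $|u|$, as prescribed by Definition~\ref{def_positive_face}.

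Once the identity is available, the theorem follows immediately: bounding $|R_n^{(u)}(\overline{y}_u)| \leq D_n^*(F_u, \vB)$ pointwise and pulling it out of the integral, then using $\int |d_u(h|_{F_u})| = V(h|F_u)$, yields
\begin{align*}
\left|\int_{[0,1]^d} h\, d(\mu_n - \lambda)\right|
\leq \sum_{\emptyset \neq u \subseteq \{1,\ldots,d\}} D_n^*(F_u, \vB)\, V(h|F_u),
\end{align*}
which is \eqref{eq_thm_koksma_inequality} after reindexing by $k = |u| = \dim F_u$ and noting that positive faces of dimension $k$ correspond bijectively to subsets $u$ of size $k$.

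The main obstacle is making the integration-by-parts identity rigorous under the Hardy-Krause hypothesis alone, rather than for smooth $h$: the measures $d_u(h|_{F_u})$ must be interpreted in the Lebesgue-Stieltjes sense, the iterated one-variable integration by parts has to be phrased as a Riemann-Stieltjes identity, and each boundary term at $y_j = 1$ must be carefully identified with integration against the restriction of $h$ to the positive face $\{s_j = 1\}$. The cleanest remedy is to approximate $h$ by a sequence of smooth functions whose face variations converge to those of $h$, to derive the identity for the approximants where classical iterated integration by parts applies, and then to pass to the limit using Helly-type compactness for sequences of functions of bounded variation; the combinatorial bookkeeping of boundary terms is routine but is where Definition~\ref{def_positive_face} of positive face naturally emerges.
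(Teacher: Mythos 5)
The paper does not prove Theorem~\ref{thm_koksma_inequality} itself; it simply cites \cite[Theorem 5.5]{kuipers-niederreiter-74}. Your proposal is a correct outline of exactly the Zaremba--Hlawka integration-by-parts argument that underlies that reference, and it is also the same mechanism the paper itself uses when it does write out a proof, namely for the variant Theorem~\ref{thm_koksma_inequality_2} (``partial integration and partial summation''). The key identity you write, the observation that the $y_j=0$ endpoints of $R_n$ vanish while the $y_j=1$ endpoints produce the boundary terms indexed by positive faces, and the final pointwise bound $\bigl|\int R_n^{(u)}\,d_u(h|_{F_u})\bigr|\le D_n^*(F_u,\vB)\,V(h|F_u)$ are all correct and constitute the standard proof. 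One small remark: the smooth-approximation step you flag as ``the main obstacle'' is not actually needed. Because $h$ is of bounded variation in the sense of Hardy and Krause, the Riemann--Stieltjes integrals against $d_u(h|_{F_u})$ are already well-defined, and the one-variable integration by parts for the empirical measure is just Abel summation; the classical treatment works directly in this setting without a limiting argument. Your remedy would work, but it is heavier than necessary.
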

%
\begin{proof}
 See \cite[Theorem 5.5]{kuipers-niederreiter-74}.
\end{proof}
We will consider in this paper only functions of the form
\begin{align}
 h(\overline{\phi})=h(\phi_1,\dots,\phi_d) = \prod_{j=1}^d \log\bigl( f_j( e^{2\pi i\phi_j}) \bigr),
 \label{eq_used_h(s)}
\end{align}
with $f_j$ being piecewise real analytic. In the context of the characteristic polynomial, we will choose $f_j(\phi_j) = |1-e^{2 \pi i \phi_j}|$. Unfortunately, we cannot apply Theorem~\ref{thm_koksma_inequality} in this case, since $\log\bigl|1-e^{2\pi i\phi_j}  \bigr|$ is not of bounded variation.
We thus reformulate Theorem~\ref{thm_koksma_inequality}.
In order to do this, we follow the idea in \cite{HKOS} and \cite{EJP2010-34} and replace $[0,1]^d$ by a slightly smaller set $Q$ such that $\vB \subset Q$ and $h|Q$ is of bounded variation in the sense of Hardy and Krause.\\
We begin with the choice of $Q$. Considering \eqref{eq_used_h(s)}, it is clear that the zeros of $f_j$ cause problems. Thus, we choose $Q$ such that $f_j$ stays away from the zeros ($1\leq j\leq d$).\\
Let $a_{1,j}< \cdots< a_{k_j,j}$ be the zeros of $f_j$ and define $a_{0,j}:=0$ and $a_{k_j+1,j} = 1$ (for $1\leq j \leq d$).
%
We then set for sufficiently small $\delta>0$
\begin{align*}
Q:= \bigcup_{\overline{q}\in \N^d} Q_{\overline{q}}
\ \text{ with } \
 Q_{\overline{q}}:= \prod_{j=1}^d \left[a_{q_j,j}+\delta, a_{q_j+1,j}-\delta\right]
\ \text{ and }\
\widetilde{Q}_{\overline{q}}:= \prod_{j=1}^d \left[a_{q_j,j}, a_{q_j+1,j}\right]
\end{align*}
Note that $\overline{q}=(q_1,\dots,q_d)\in\{0,\dots,k_1+1\}\times\{0,\dots, k_2+1\}\times\cdots\times\{0,\dots,k_d+1\}$ and we consider $Q_{\overline{q}}$ as empty if we have $q_j>k_j+1$ for some $j$.
An illustration of possible $Q$ is given in Figure~\ref{fig_cube_Q}.\\

We will now adjust the Definitions~\ref{def_bounded_variation_hardy_krause},~\ref{def_positive_face} and~\ref{def_pi_f(t)}.
The modification of Definition~\ref{def_bounded_variation_hardy_krause} is obvious.
One simple takes $h$ to be of bounded variation in the sense of Hardy and Krause in each $Q_{\overline{q}}$.
The modification of Definition~\ref{def_positive_face} is also straightforward.
We call a face $F$ of $ Q$ positive if there exists a $\overline{q}\in \N^d$ and a sequence $j_1,\cdots,j_k$ in $\{1,\dots,d\}$ such that, for $s_j$ ($1\leq j\leq d$) being the canonical coordinates in $[0,1]^d$,
\begin{align*}
 F =  \bigcap_{m=1}^k \left( \set{ s_{j_m} = a_{q_{j_m+1},{j_m}}-\delta } \cap Q_{\overline{q}} \right).
\end{align*}

\begin{figure}[h!]
\centering
  \includegraphics[width=.4\textwidth]{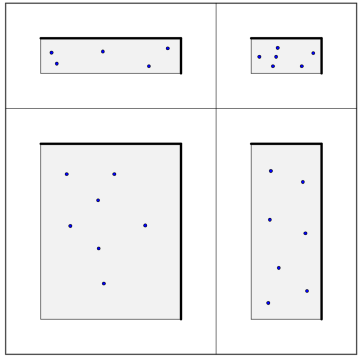}\\
  \caption{Illustration of $Q$, positive faces are bold}\label{fig_cube_Q}
\end{figure}

The modification of Definition~\ref{def_pi_f(t)} is slightly more tricky.
Let $F$ be a face of some $Q_{\overline{q}}$. Let $\vB\cap Q_{\overline{q}}$ be the subsequence of $\vB$ contained in $Q_{\overline{q}}$ and $\pi_F(\vB\cap Q_{\overline{q}})$ be the projection of $\vB\cap Q_{\overline{q}}$ to the face $F$.
Unfortunately we cannot directly compute the discrepancy in the face $F$.
We will see in the proof of Theorem~\ref{thm_koksma_inequality_2} that we have to ``extend $F$ to the boundary of $\widetilde{Q}_{\overline{q}}$''.
More precisely this means to following: We set $\widetilde{F} := L \cap \widetilde{Q}_{\overline{q}}$, where $L$ is the linear subspace generated by $F$ such that $\dim(L) = \dim(F)$ (see Figure~\ref{fig_compute_discrepanz} for an illustration).
The discrepancy $D^*_n(F,\vB)$ is then defined as the discrepancy of $\pi_F(\vB\cap Q_{\overline{q}})$ computed in $\widetilde{F}$.
\begin{figure}[h!]
\centering
  \includegraphics[width=.4\textwidth]{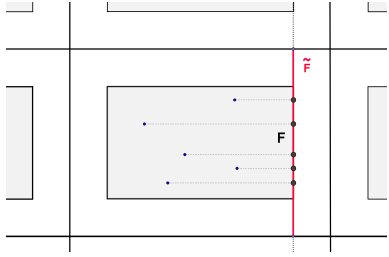}
  \caption{Illustration of $\widetilde{F}$}\label{fig_compute_discrepanz}
\end{figure}

%
%
%
%
%
%
%
%
%
We are now ready to state an extended version of Theorem~\ref{thm_koksma_inequality}. 
\begin{theorem}
\label{thm_koksma_inequality_2}
Let $\delta>0$ be fixed and $\vB = (\varphi^{(m)})_{m=1}^n$ be a sequence in $Q$.
Let $h: Q \to \C$ be a function of bounded variation in the sense of Hardy and Krause.
We then have
\begin{align}
\label{eq_koksma_inequality_2}
\left|
 \frac{1}{n} \sum_{m=1}^n h(\varphi^{(m)}) - \int_{Q} h(\overline{\phi})  \ d\overline{\phi}
\right|
\leq & \
\sum_{k=0}^{d-1} \delta^{d-k} \sum_{\substack{F \\ \textrm{dim}(F)= k}} \int_F h(\overline{\phi}) \ dF
\\
&+
\sum_{k=1}^d \sum_{\substack{F \text{ positive}\\ \textrm{dim}(F)= k}} D_n^*\bigl(F,\vB\bigr) V(h|F).
\nonumber
\end{align}
where $dF=dF(\overline{\phi})$ denotes the Lebesgue measure on the face $F$.
\end{theorem}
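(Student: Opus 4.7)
The plan is to reduce to the classical Koksma--Hlawka inequality (Theorem~\ref{thm_koksma_inequality}) applied separately on each sub-box $\widetilde{Q}_{\overline{q}}$, and to account for the mismatch between $\widetilde{Q}_{\overline{q}}$ (where $h$ is of bounded variation in the sense of Hardy and Krause) and the smaller box $Q_{\overline{q}}$ (over which the integral in \eqref{eq_koksma_inequality_2} is actually taken) by an explicit boundary-layer expansion of width $\delta$. Since $Q=\bigsqcup_{\overline{q}}Q_{\overline{q}}$ and $\vB\subset Q$, one has
\begin{align*}
\frac{1}{n}\sum_{m=1}^n h(\varphi^{(m)}) - \int_Q h(\overline{\phi})\,d\overline{\phi}
=\sum_{\overline{q}}\left(\frac{1}{n}\sum_{\varphi^{(m)}\in Q_{\overline{q}}}h(\varphi^{(m)})-\int_{Q_{\overline{q}}}h(\overline{\phi})\,d\overline{\phi}\right),
\end{align*}
so it suffices to estimate each summand separately.

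For each $\overline{q}$, the subsequence $\vB\cap Q_{\overline{q}}$ is contained in the slightly larger box $\widetilde{Q}_{\overline{q}}$, and an affine rescaling of $\widetilde{Q}_{\overline{q}}$ to $[0,1]^d$ combined with Theorem~\ref{thm_koksma_inequality} applied to $h$ (which is BV on $\widetilde{Q}_{\overline{q}}$ by hypothesis) yields
\begin{align*}
\left|\frac{1}{n}\sum_{\varphi^{(m)}\in Q_{\overline{q}}}h(\varphi^{(m)})-\int_{\widetilde{Q}_{\overline{q}}}h(\overline{\phi})\,d\overline{\phi}\right|
\leq\sum_{\substack{F\text{ positive face of }Q_{\overline{q}}\\ \dim F\geq 1}}D^*_n(F,\vB)\,V(h|F),
\end{align*}
where the identification of the rescaled face discrepancy with $D^*_n(F,\vB)$ is precisely the modified form of Definition~\ref{def_pi_f(t)}, which computes the discrepancy of $\pi_F(\vB\cap Q_{\overline{q}})$ inside the extended face $\widetilde{F}$. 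Summing over $\overline{q}$ contributes the second sum on the right-hand side of \eqref{eq_koksma_inequality_2}.

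It remains to convert $\int_{\widetilde{Q}_{\overline{q}}}h$ into $\int_{Q_{\overline{q}}}h$. Writing each interval $[a_{q_j,j},a_{q_j+1,j}]$ as the disjoint union of $[a_{q_j,j},a_{q_j,j}+\delta]$, $[a_{q_j,j}+\delta,a_{q_j+1,j}-\delta]$ and $[a_{q_j+1,j}-\delta,a_{q_j+1,j}]$ and applying Fubini, one obtains
\begin{align*}
\int_{\widetilde{Q}_{\overline{q}}}h\,d\overline{\phi}-\int_{Q_{\overline{q}}}h\,d\overline{\phi}
=\sum_{\emptyset\neq S\subseteq\{1,\dots,d\}}\sum_{\epsilon\in\{\pm\}^S}\int_{\mathrm{slab}(S,\epsilon)}h\,d\overline{\phi},
\end{align*}
where $\mathrm{slab}(S,\epsilon)$ is the $\delta^{|S|}$-thin slab hugging the corresponding face $F(S,\epsilon)$ of $\widetilde{Q}_{\overline{q}}$ of dimension $d-|S|$. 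To leading order each slab integral is bounded by $\delta^{|S|}\int_{F(S,\epsilon)}h\,dF$; regrouping by $k:=d-|S|\in\{0,\dots,d-1\}$ and summing over $\overline{q}$ then produces the first sum on the right-hand side of \eqref{eq_koksma_inequality_2}.

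The main obstacle I anticipate is the combinatorial bookkeeping between the sub-boxes: faces shared between neighbouring $Q_{\overline{q}}$ and $Q_{\overline{q}'}$ must be handled consistently without double counting, and one must verify that only \emph{positive} faces survive in the discrepancy sum --- which follows from the $*$-discrepancy being tied to anchored rectangles, so that contributions from the lower faces of each rescaled $\widetilde{Q}_{\overline{q}}$ drop out. Controlling the higher-order $\delta$-remainders in the boundary-layer expansion should be routine using continuity and boundedness of $h$ on each compact $\widetilde{Q}_{\overline{q}}$, and any such remainders can be absorbed into the explicit $\delta^{d-k}$ weights on the right-hand side of \eqref{eq_koksma_inequality_2}.
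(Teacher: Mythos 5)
Your plan does not reproduce (nor can it reach) the paper's argument, and it contains a gap that is in fact the very obstruction the theorem is designed to circumvent. The hypothesis is that $h\colon Q\to\C$ is of bounded variation in the Hardy--Krause sense \emph{on $Q$} (i.e.\ on each small box $Q_{\overline{q}}$). It is not assumed --- and in the intended applications it is false --- that $h$ extends to a BV function on the closed box $\widetilde{Q}_{\overline{q}}$: with $h(\phi)=\log|f(e^{2\pi i\phi})|$ and $f$ vanishing at the endpoints $a_{q_j,j}$, the function $h$ has a logarithmic singularity on $\partial\widetilde{Q}_{\overline{q}}$, so $V(h\mid\widetilde{Q}_{\overline{q}})=\infty$ and $h$ is unbounded near that boundary. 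Consequently you cannot invoke Theorem~\ref{thm_koksma_inequality} on (a rescaling of) $\widetilde{Q}_{\overline{q}}$ as you do in your second display, since its right-hand side is infinite. This is precisely the reason the authors cannot apply the classical Koksma--Hlawka inequality directly and have to retreat from $[0,1]^d$ to $Q$ in the first place; your reduction re-introduces the problem on each sub-box.

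The same difficulty breaks the boundary-layer step. The difference $\int_{\widetilde{Q}_{\overline{q}}}h-\int_{Q_{\overline{q}}}h$ is an integral of $h$ over $\delta$-thin slabs hugging the singular set, and for singular $h$ this is \emph{not} controlled by $\delta^{|S|}\int_{F(S,\epsilon)}h\,dF$ where $F(S,\epsilon)\subset\partial Q_{\overline{q}}$ sits at distance $\delta$ from the singularity; there is no "absorbing the remainder into the $\delta^{d-k}$ weights" when $h$ blows up in the slab. (There is also a secondary normalisation mismatch in the multi-box case: Koksma--Hlawka applied to the subsequence $\vB\cap Q_{\overline{q}}$ normalises by $\#(\vB\cap Q_{\overline{q}})$, not by $n$.) By contrast, the paper avoids $\widetilde{Q}_{\overline{q}}$ entirely: it integrates $\bigl(A_n(\overline{\phi})/n-\prod_j\phi_j\bigr)$ against $dh$ directly over $Q$ (where $h$ \emph{is} BV), uses repeated integration by parts and partial summation to produce an exact identity whose boundary terms live on $\partial Q$, and only then bounds the remaining Stieltjes integral by $D_n^*V(h\mid F)$. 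The reference to $\widetilde{F}$ enters solely through the \emph{definition} of $D_n^*(F,\vB)$ --- a bookkeeping device for the anchored-rectangle count $A_n$ --- not through any integral of $h$ over $\widetilde{Q}_{\overline{q}}$. So the correct route is to generalise the proof of Theorem~\ref{thm_koksma_inequality} on the truncated box, not to apply Theorem~\ref{thm_koksma_inequality} on the full box and then truncate.
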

\begin{proof}[Proof for $d=1$ and $d=2$]
We assume that $Q=[\delta, 1-\delta]^d$. The more general case can be proven in the same way.\\
The idea is to modify the proof of Theorem~\ref{thm_koksma_inequality} in \cite{kuipers-niederreiter-74}.
There are indeed only minor modifications necessary. We present here only the cases $d=1$ and $d=2$ since we only need these two cases.\\
\underline{$d=1$:} We consider the integral $I_1= I_1(h):=\int_{\delta}^{1-\delta} \bigl(\frac{A_n(\phi)}{n}-\phi\bigr) \ dh(\phi)$ with $A_n(\phi)$ given as in Definition~\ref{def_d_uniform_dist_in_[0,1]^d}.

It is clear from the definition of $D_n^*(\vB)$ that
\begin{equation}
 \left| \int_{\delta}^{1-\delta} \left(\frac{A_n(\phi)}{n}-\phi\right) \ dh(\phi) \right|
\leq  D_n^*(\vB) \int_{\delta}^{1-\delta} \ |dh(\phi)| =  D_n^*(\vB)V(h|[\delta,1-\delta]).
\end{equation}

On the other hand, one can use partial integration and partial summation to show that
\begin{equation}
 I_1
 =
 \bigl(\delta h(1-\delta) +\delta h(\delta)\bigr) + \int_{\delta}^{1-\delta} h(\phi)\ d\phi - \frac{1}{n}\sum_{m=1}^n h(\varphi^{(m)}).
\label{eq_proof_kos_d=1}
\end{equation}
This proves the theorem for $d=1$.\\
\underline{$d=2$:} In this case we consider the integral 
$$I_2=\int_{[\delta,1-\delta]^2} \left(\frac{A_n(\phi)}{n}-\phi_1\phi_2\right) \ dh(\phi_1,\phi_2).$$ 
The argumentation is similar to the case $d=1$. As above, it is immediate that $I_2$ is bounded 
by $D_n^*(\vB)V(h|[\delta,1-\delta]^2)$. On the other hand, we get after consecutive partial integration
\begin{eqnarray}
\label{eq_proof_kok_smooth_part}
 &&\int_{[\delta,1-\delta]^2} \phi_1\phi_2\ dh(\phi_1,\phi_2)\nonumber\\
 &&\quad=\sum_{k=0}^2 \sum_{\substack{F\\ \textrm{dim}(F)= k}} \delta^{2-k} \int_F h \ dF
 -
 \sum_{\substack{F \text{ positive}\\ \textrm{dim}(F)= 1}} \int_F h \ dF\nonumber\\
 &&\qquad+ h(1-\delta,1-\delta) -2\delta h(1-\delta,1-\delta) - \delta h(\delta,1-\delta) -\delta h(1-\delta,\delta)\qquad\quad
\end{eqnarray}
and with two times partial summation
\begin{align}
& \frac{1}{n}\int_{[\delta,1-\delta]^2} A_n(\phi_1,\phi_2) \ dh(\phi_1,\phi_2)\nonumber\\
&\qquad=
 h(1-\delta,1-\delta)
-\sum_{\substack{F \text{ positive}\\ \textrm{dim}(F)= 1}} \frac{1}{n} \sum_{m=1}^n h\bigl( \pi_F(\varphi^{(m)})  \bigr)
+ \frac{1}{n} \sum_{m=1}^n h(\varphi^{(m)}).
\label{eq_proof_kok_A_part}
\end{align}
We now subtract \eqref{eq_proof_kok_smooth_part} from \eqref{eq_proof_kok_A_part} and expand the sum over the positive faces (with $\varphi^{(m)} = (\varphi_1^{(m)}, \varphi_2^{(m)})$). We get
\begin{align}
 I_2
=&
\left(\frac{1}{n} \sum_{m=1}^n h(\varphi^{(m)}) - \int_{Q} h(\overline{\phi})  \ d\overline{\phi}  \right)
-\sum_{k=0}^1 \sum_{\substack{F \\ \textrm{dim}(F)= k}} \delta^{2-k} \int_F h \ dF \label{eq_proof_kok_2_I2}\\
&+
\left( \int_{\delta}^{1-\delta} h(u,1-\delta)\ du - \frac{1}{n} \sum_{m=1}^n h(\varphi_1^{(m)},1-\delta) + \delta h(\delta,1-\delta) +  \delta h(1-\delta,1-\delta)  \right)
\label{eq_proof_kok_2a}\\
&+
\left( \int_{\delta}^{1-\delta} h(1-\delta,v) \ dv - \frac{1}{n} \sum_{m=1}^n h(1-\delta,\varphi_2^{(m)}) + \delta h(1-\delta,\delta) +  \delta h(1-\delta,1-\delta)  \right).
\label{eq_proof_kok_2b}
\end{align}

The brackets \eqref{eq_proof_kok_2a} and \eqref{eq_proof_kok_2b} agree with \eqref{eq_proof_kos_d=1}
if we set ``$h(s) = h(s,1-\delta)$" in \eqref{eq_proof_kok_2a}, respectively "$h(s)= h(1-\delta,s)$" in \eqref{eq_proof_kok_2b}.
We thus can interpret the brackets \eqref{eq_proof_kok_2a} and \eqref{eq_proof_kok_2b} as integrals over the positive faces of $Q$ and apply the induction hypothesis ($d=1$). A simple application of the triangle inequality proves the theorem for $d=2$.\\
It is important to point out that the discrepancy of $(\varphi_1^{(m)})_{m=1}^n$ and $(\varphi_2^{(m)})_{m=1}^n$ is computed in $[0,1]$ and not in $[\delta,1-\delta]$. This observation is the origin for the definition of $D_n^*(F,\vB)$ before Theorem~\ref{thm_koksma_inequality_2}.

\end{proof}

In Section~\ref{sec_CLT_in_ddim}, we will consider sums of the form
\begin{align}
\label{eq:relevent_sums}
 \frac{1}{n} \sum_{m=1}^n \log\left( f_j\left( e^{2\pi i m\varphi_j}\right)\right)    \log\left(f_\ell\left(e^{2\pi i m\varphi_\ell}\right) \right).
\end{align}
We are thus primary interested in ($d$-dimensional) sequences $\vB_{\text{Kro}}=(\varphi_{\text{Kro}}^{(m)})_{m=1}^\infty$, for given $\overline{\varphi}=(\varphi_1,\cdots, \varphi_d) \in \R^d$, defined as follows:
\begin{align}
  \varphi_{\text{Kro}}^{(m)} = \left( \fracpart{m \varphi_1},\dots, \fracpart{m \varphi_d} \right),
\end{align}
where $\fracpart{s}:=s-\intpart{s} \text{ and } \intpart{s}:=\max\set{n\in\Z, n\leq s}$. The sequence $\vB=\vB_{\text{Kro}}$ is called  \emph{Kronecker-sequence of} $\overline{\varphi}$.
%
The next lemma shows that the Kronecker-sequence
is for almost all $\overline{\varphi}\in \R^d$ uniformly distributed.
\begin{lemma}
\label{lem_all_irrational_uniform_distributed}
Let $\overline{\varphi}=(\varphi_1,\dots, \varphi_d)\in \R^d$ be given.
The Kronecker-sequence of $\vB$ is uniformly distributed in $[0,1]^d$ if and only if $1,\varphi_1,\dots,\varphi_d$ are linearly independent over $\Z$.
\end{lemma}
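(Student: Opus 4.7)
The plan is to invoke Weyl's criterion for uniform distribution, which states that a sequence $(\varphi^{(m)})_{m=1}^\infty$ in $[0,1]^d$ is uniformly distributed if and only if, for every non-zero $\mathbf{h} = (h_1,\ldots,h_d) \in \mathbb{Z}^d$,
\begin{align*}
\frac{1}{n}\sum_{m=1}^n e^{2\pi i \langle \mathbf{h},\varphi^{(m)}\rangle} \longrightarrow 0 \quad \text{as } n\to\infty.
\end{align*}
This is standard (see for instance Kuipers--Niederreiter, Theorem~6.2), and since the authors cite this book throughout Section~2.2, I would simply state it and use it as a black box.

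For the Kronecker sequence $\varphi_{\text{Kro}}^{(m)}=(\{m\varphi_1\},\ldots,\{m\varphi_d\})$, I would observe that $\langle \mathbf{h},\varphi_{\text{Kro}}^{(m)}\rangle$ differs from $m(h_1\varphi_1+\cdots+h_d\varphi_d)$ by an integer, so
\begin{align*}
\frac{1}{n}\sum_{m=1}^n e^{2\pi i \langle \mathbf{h},\varphi_{\text{Kro}}^{(m)}\rangle}
=
\frac{1}{n}\sum_{m=1}^n e^{2\pi i m\alpha}, \qquad \alpha := h_1\varphi_1+\cdots+h_d\varphi_d.
\end{align*}
This is a finite geometric sum which I would bound explicitly: if $\alpha\notin\mathbb{Z}$ then $|1-e^{2\pi i \alpha}|>0$ and the modulus is at most $\tfrac{2}{n|1-e^{2\pi i \alpha}|}\to 0$; if $\alpha\in\mathbb{Z}$ then every summand equals $1$ and the sum is identically $1$.

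I would then convert this into the stated equivalence: by Weyl's criterion, $\vB_{\text{Kro}}$ is uniformly distributed iff $\alpha\notin\mathbb{Z}$ for every non-zero $\mathbf{h}\in\mathbb{Z}^d$, which is exactly the statement that no non-trivial integer relation $h_0\cdot 1 + h_1\varphi_1+\cdots+h_d\varphi_d=0$ exists, i.e.\ that $1,\varphi_1,\ldots,\varphi_d$ are linearly independent over $\mathbb{Z}$. The direction ``independent $\Rightarrow$ uniformly distributed'' uses the first case of the geometric-sum estimate, while ``not independent $\Rightarrow$ not uniformly distributed'' uses the second.

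I do not anticipate a substantial obstacle here: the only point worth being careful about is the equivalence between a $\mathbb{Z}$-linear relation among $1,\varphi_1,\ldots,\varphi_d$ and the existence of a non-zero $\mathbf{h}\in\mathbb{Z}^d$ with $\sum h_j\varphi_j\in\mathbb{Z}$, which is immediate by moving the constant term to the right-hand side. Everything else is the standard Weyl argument.
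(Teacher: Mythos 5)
Your proof is correct and complete. The paper itself offers no argument for this lemma: it simply cites Drmota--Tichy \cite[Theorem~1.76]{MR1470456}. What you have written out is precisely the standard proof underlying that citation -- Weyl's criterion plus the geometric-sum estimate for the Kronecker sequence -- and both directions are handled properly: the bound $\frac{2}{n\,|1-e^{2\pi i\alpha}|}\to 0$ when $\alpha=\sum_j h_j\varphi_j\notin\Z$ gives uniform distribution under linear independence, while the constant sum equal to $1$ for some non-zero $\mathbf{h}$ with $\alpha\in\Z$ violates the criterion when a non-trivial relation $h_0+h_1\varphi_1+\cdots+h_d\varphi_d=0$ exists. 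The translation between ``$\alpha\in\Z$ for some non-zero $\mathbf{h}$'' and $\Z$-linear dependence of $1,\varphi_1,\dots,\varphi_d$ is exactly as you say. So compared with the paper you buy self-containedness at the cost of invoking Weyl's criterion as a black box (itself available in the same references the authors use), which is a perfectly reasonable trade; there is no gap.
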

\begin{proof}
 See \cite[Theorem 1.76]{MR1470456}
\end{proof}

Our aim is to apply Theorem~\ref{thm_koksma_inequality} and Theorem~\ref{thm_koksma_inequality_2} for Kronecker sequences. We thus have to estimate the discrepancy in this case and find a suitable $\delta>0$.
We start by giving an upper bound for the discrepancy.
\begin{lemma}
\label{lem_estimate_Dn_Kronecker}
Let $\overline{\varphi} = (\varphi_1,\dots,\varphi_d)\in [0,1]^d$ be given with $1,\varphi_1,\dots,\varphi_d$ linearly independent over $\Z$.
Let $\vB$ be the Kronecker sequence of $\overline{\varphi}$. We then have for each $H\in\N$
\begin{align}
D_n^*(\vB)
\leq
3^d \left(\frac{2}{H+1} + \frac{1}{n} \sum_{0 < \norminf{\overline{q}} \leq H} \frac{1}{r(\overline{q}) \norm{\overline{q}\cdot \overline{\varphi}}}  \right)
\end{align}
with $\norminf{.}$ being the maximum norm, $\norm{a}:= \inf_{n\in\Z} |a-n|$ and $r(\overline{q})=\prod_{i=1}^d \max\set{1,q_i}$ for $\overline{q}=(q_1,\cdots,q_d)\in\N^d$.
\end{lemma}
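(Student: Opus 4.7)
The plan is to derive the estimate from the multidimensional Erdős–Turán–Koksma inequality combined with a direct summation of the exponentials along the Kronecker sequence. Recall that the Erdős–Turán–Koksma inequality (see, e.g., \cite[Theorem~1.21]{MR1470456} or \cite[Theorem~2.5]{kuipers-niederreiter-74}) states that for any sequence $\vB = (\varphi^{(m)})_{m=1}^\infty$ in $[0,1]^d$ and every $H\in\N$,
\begin{align*}
D_n^*(\vB)
\leq
3^d \left( \frac{2}{H+1} + \sum_{0<\norminf{\overline{q}}\leq H}\frac{1}{r(\overline{q})} \left| \frac{1}{n} \sum_{m=1}^n e^{2\pi i\,\overline{q}\cdot \varphi^{(m)}} \right| \right).
\end{align*}
So the work reduces to estimating the inner exponential sum for $\varphi^{(m)}_{\mathrm{Kro}} = (\fracpart{m\varphi_1},\dots,\fracpart{m\varphi_d})$, with no direct analysis of the counting function $A_n(\overline\alpha)$ required.

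For the Kronecker sequence the integer parts are absorbed by the exponential, so
\begin{align*}
e^{2\pi i\,\overline{q}\cdot \varphi^{(m)}_{\mathrm{Kro}}} = e^{2\pi i m (\overline{q}\cdot\overline{\varphi})},
\end{align*}
and the sum becomes a geometric progression in $m$. The hypothesis that $1,\varphi_1,\dots,\varphi_d$ are linearly independent over $\Z$ guarantees $\overline{q}\cdot\overline{\varphi}\notin\Z$ for every nonzero $\overline{q}\in\Z^d$ with $\norminf{\overline{q}}\leq H$, so the denominator in the closed form is nonzero and
\begin{align*}
\left|\frac{1}{n}\sum_{m=1}^n e^{2\pi i m(\overline{q}\cdot\overline{\varphi})}\right|
= \frac{1}{n} \left| \frac{1-e^{2\pi i n (\overline{q}\cdot\overline{\varphi})}}{1-e^{2\pi i (\overline{q}\cdot\overline{\varphi})}} \right|
\leq \frac{2}{n\,\bigl|1-e^{2\pi i (\overline{q}\cdot\overline{\varphi})}\bigr|}.
\end{align*}

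The final ingredient is the elementary estimate $|1-e^{2\pi i t}| = 2|\sin(\pi t)| \geq 4\norm{t}$, which follows from $|\sin(\pi x)|\geq 2|x|$ for $|x|\leq 1/2$ applied with $x = \norm{t}$. This converts the bound into
\begin{align*}
\left|\frac{1}{n}\sum_{m=1}^n e^{2\pi i m(\overline{q}\cdot\overline{\varphi})}\right|
\leq
\frac{1}{2n\,\norm{\overline{q}\cdot\overline{\varphi}}}
\leq
\frac{1}{n\,\norm{\overline{q}\cdot\overline{\varphi}}}.
\end{align*}
Substituting into the Erdős–Turán–Koksma inequality yields the claimed bound. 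The argument presents no substantial obstacle: it is a direct combination of two standard tools from the theory of uniform distribution, and the linear independence hypothesis on $1,\varphi_1,\dots,\varphi_d$ enters solely to ensure $\overline{q}\cdot\overline{\varphi}\notin\Z$, which is exactly what is needed to prevent the denominator from vanishing and to keep $\norm{\overline{q}\cdot\overline{\varphi}}$ strictly positive on the finite range $0<\norminf{\overline{q}}\leq H$.
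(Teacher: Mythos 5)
Your proposal is correct and follows essentially the same route as the paper, which simply invokes the Erd\H{o}s--Tur\'an--Koksma inequality (Theorem~1.21 in \cite{MR1470456}); you merely spell out the standard details, namely the geometric summation of $e^{2\pi i m(\overline{q}\cdot\overline{\varphi})}$ and the bound $|1-e^{2\pi i t}|\geq 4\norm{t}$, with the linear independence over $\Z$ ensuring $\norm{\overline{q}\cdot\overline{\varphi}}>0$. No gaps.
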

\begin{proof}
 The proof is a direct application of the Erd\"os-Tur\'an-Koksma inequality (see \cite[Theorem~1.21]{MR1470456}).
\end{proof}

It is clear that we can use Lemma~\ref{lem_estimate_Dn_Kronecker} to give an upper bound for the discrepancy, if we can find a lower bound for $\norm{\overline{q}\cdot \overline{\varphi}}$.
The most natural is thus to assume that $\overline{\varphi}$ fulfills some diophantine inequality.
In order to state this more precise, we give the following definition:
\begin{definition}\label{def_finite_type}
  Let $\overline{\varphi}\in [0,1]^d$ be given.
  We call $\overline{\varphi}$ of finite type if there exist constants $K>0$ and $\gamma \geq 1$ such that
  \begin{align}
    \norm{\overline{q}\cdot \overline{\varphi}} \geq \frac{K}{(\norminf{\overline{q}})^\gamma} \ \text{ for all }\overline{q}\in \Z^d\setminus\set{0}.
  \end{align}
\end{definition}

If $\overline{\varphi} = (\varphi_1,\dots,\varphi_d)$ is of finite type, then it follows immediately from the definition that each $\varphi_j$ is also of finite type and the sequence $1,\varphi_1,\dots,\varphi_d$ is linearly independent over $\Z$.

One can now show the following:
\begin{theorem}\label{thm_estimate_Dn_finite_type}
  Let $\overline{\varphi}\in [0,1]^d$ be of finite type and $\vB$ be the Kronecker sequence of $\overline{\varphi}$. Then
 \begin{align}
    D_n^*(\vB) = O(n^{-\alpha}) \ \text{ for some }\alpha>0.
  \end{align}
\end{theorem}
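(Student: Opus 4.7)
The plan is a direct application of the Erd\H{o}s--Tur\'an--Koksma-type bound from Lemma~\ref{lem_estimate_Dn_Kronecker} combined with the finite-type hypothesis, optimizing the truncation parameter $H$ in the end.

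First, I would invoke Lemma~\ref{lem_estimate_Dn_Kronecker}, which gives, for every $H\in\N$,
\begin{align*}
D_n^*(\vB) \leq 3^d \left(\frac{2}{H+1} + \frac{1}{n} \sum_{0 < \norminf{\overline{q}} \leq H} \frac{1}{r(\overline{q})\,\norm{\overline{q}\cdot \overline{\varphi}}}  \right).
\end{align*}
Since $\overline{\varphi}$ is of finite type with constants $K,\gamma$, every $\overline{q}$ appearing in the sum satisfies $\norminf{\overline{q}}\leq H$ and hence
\begin{align*}
\frac{1}{\norm{\overline{q}\cdot \overline{\varphi}}}\leq \frac{(\norminf{\overline{q}})^{\gamma}}{K} \leq \frac{H^{\gamma}}{K}.
\end{align*}
This reduces the problem to estimating $\sum_{0<\norminf{\overline{q}}\leq H} r(\overline{q})^{-1}$.

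Next, I would bound that combinatorial sum in the elementary way: writing $r(\overline{q})=\prod_{i=1}^d \max\{1,q_i\}$ and splitting coordinates into ``zero'' and ``nonzero'' parts, one gets
\begin{align*}
\sum_{0 < \norminf{\overline{q}} \leq H} \frac{1}{r(\overline{q})} \leq \Bigl(1 + 2\sum_{q=1}^{H}\frac{1}{q}\Bigr)^{d} = O\bigl((\log H)^{d}\bigr).
\end{align*}
Combining these estimates yields
\begin{align*}
D_n^*(\vB) \leq C_d\left(\frac{1}{H} + \frac{H^{\gamma}(\log H)^{d}}{n}\right)
\end{align*}
for some constant $C_d=C_d(K,\gamma,d)$.

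Finally, I would balance the two terms by choosing $H = \lfloor n^{1/(\gamma+1)}\rfloor$, which gives $1/H \asymp n^{-1/(\gamma+1)}$ and $H^\gamma/n \asymp n^{-1/(\gamma+1)}$, so that
\begin{align*}
D_n^*(\vB) = O\bigl(n^{-1/(\gamma+1)}(\log n)^{d}\bigr).
\end{align*}
Absorbing the logarithmic factor by taking any $\alpha < 1/(\gamma+1)$ then gives $D_n^*(\vB)=O(n^{-\alpha})$, as claimed. There is no real obstacle here; the only mild subtlety is the logarithmic loss coming from $\sum 1/r(\overline{q})$, but this is harmless since the statement only asks for \emph{some} positive $\alpha$, so any $\alpha\in(0,1/(\gamma+1))$ works.
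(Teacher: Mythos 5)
Your proposal is correct and follows essentially the same route as the paper, which also deduces the result directly from the Erd\H{o}s--Tur\'an--Koksma bound of Lemma~\ref{lem_estimate_Dn_Kronecker} together with the finite-type condition and an optimization in $H$ (the paper merely cites this as a ``simple computation'', referring to Drmota--Tichy). Your cruder bound $\norm{\overline{q}\cdot\overline{\varphi}}^{-1}\leq H^{\gamma}/K$ loses a bit in the exponent compared with the sharper estimate in the cited reference, but since only \emph{some} $\alpha>0$ is claimed, any $\alpha<1/(\gamma+1)$ suffices and the argument is complete.
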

\begin{proof}
  This theorem is a direct consequence of Lemma~\ref{lem_estimate_Dn_Kronecker} and a simple computation.
  Further details can be found in \cite[Theorem~1.80]{MR1470456} or in \cite{MR2352304}.
\end{proof}
As already mentioned above, we will consider in Section~\ref{sec_CLT_in_ddim} sums of the form \eqref{eq:relevent_sums}.
Surprisingly, it is not necessary to consider summands with more than two factors,
 even when we study the joint behavior at more than two points. We thus give the following definition:
\begin{definition}
\label{def_finite_type_pairwise}
  Let $x_1=e^{2\pi i \varphi_1},\dots, x_d = e^{2\pi i \varphi_d}$ be given. We call both sequences $(x_j)_{j=1}^d$ and $(\varphi_j)_{j=1}^d$ pairwise of finite type, if we have for all $j \neq \ell$ that $(\varphi_j,\varphi_\ell) \in [0,1]^2$ is of finite type in the sense of Definition~\ref{def_finite_type}.
\end{definition}

\section{Central Limit Theorems for the Symmetric Group}
\label{sec_CLT_on_Sn}

In this section, we state general Central Limit Theorems (CLT's) on the symmetric group. 
These theorems will allow us to prove CLT's for the logarithm of the characteristic polynomial and for multiplicative class functions.

For a permutation $\sigma\in S_n$, chosen with respect to the Ewens distribution with parameter $\theta$, let $C_m$ be the random variable corresponding to the number of cycles of length $m$ of $\sigma$. In order to state the CLT's on the symmetric group, we introduce random variables
\begin{align}
\label{eq_def_An}
A_n:=\sum_{m=1}^n\sum_{k=1}^{C_m}X_{m,k},
\end{align}
where we consider $X_{m,k}$ to be independent real valued random variables with
$X_{m,k} \stackrel{d}{=} X_{m,1}$,  for all $1\leq m\leq n$ and $k\geq 1$. Furthermore, all $X_{m,k}$ are independent of $\sigma$. 
Of course, if $X_{m,k}=\Re(\log (1-x^{-m} T_{m,k}))$ (or $\Im(\log (1-x^{-m} T_{m,k}))$), then $A_n$ is equal in law to the real (or imaginary) part of $\log Z_{n,z}(x)$, which is the logarithm of the characteristic polynomial of $M_{\sigma, z}$. This will be treated in Section~\ref{sec_examples}.

\subsection{Degenerate case}

We give in this subsection an overview over degenerate case $X_{m,k} \equiv a_m$ with $a_m\in\mathbb{R}$.
The second author has proven for this situation in \cite{DZ-CLT} a central limit theorem for $A_n$ with a Lyapunov condition using the Feller-coupling.
A more modern approach base on generating functions and complex analysis.
This method has been used by Manstavi{\v{c}}ius in \cite{Ma96} to prove a central limit theorem for $A_n$ with a Lindeberg-Feller condition.
Furthermore Manstavi{\v{c}}ius has given in \cite{Ma08} sufficient and necessary conditions for the weak convergence of a sightly more general random variables and 
Babu and Manstavi{\v{c}}ius have extended in \cite{BaMa99} the CLT to a functional limit theorem. An overview can be found in \cite{Ma11} and in the references therein.

\subsection{One dimensional CLT}

The argumentation by Manstavi{\v{c}}ius can also be used in the situation for non degenerate $X_{m,k}$ and
to extend the CLT to weighted measure recently studied by Ercolani and Ueltschi \cite{ErUe11} 
(Details about the weighted measure on the symmetric group can be found for instance in \cite{HuNaNiZe11}, \cite{MaNiZe11}, \cite{NiStZe13}, \cite{NiZe11}).
This computations are quit involed. We thus postpone them to a further paper and use instead the following CLT proven in \cite{MaNiZe11}

\begin{theorem}[Hughes, Nikeghbali, Najnudel, Zeindler (2011)]
\label{thm_trace_infinite_case}
Assume that
\begin{align}\label{eq:V_N}
    V_n := \sum_{m=1}^n \frac{1}{m} \E{\left(X_{m,1}\right)^2} \longrightarrow \infty \qquad (n\to\infty).
  \end{align}
Assume further that there exists a $p>\max\left\{ \frac{1}{\theta} , 2 \right\}$ such
        that
\begin{equation}\label{eq:inf_var_cond}
\sum_{m=1}^{n} \frac{1}{m}  \E{\left|X_{m,1}\right|^p} =
        o\left(V_n^{p/2}\right)
\end{equation}
Then
  \begin{align}
    \left( \frac{A_n - \E{A_n}}{ \sqrt{\theta V_N}} \right)_{n \geq 1}
  \end{align}
converges in distribution to a standard gaussian random
variable.
%
\end{theorem}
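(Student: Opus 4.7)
The plan is to reduce the problem to a sum of independent random variables via the Feller coupling of Section~\ref{sec_Feller_coupling}. On the coupled space, both the cycle counts $C_m^{(n)}$ and the independent Poissons $Y_m$ (with $\E{Y_m}=\theta/m$) live together, and writing
$$\tilde{A}_n := \sum_{m=1}^n \sum_{k=1}^{Y_m} X_{m,k}, \qquad R_n := A_n - \tilde{A}_n,$$
one obtains a random variable $\tilde{A}_n$ that is a sum of \emph{independent} compound Poisson contributions $S_m := \sum_{k=1}^{Y_m} X_{m,k}$, the family $(X_{m,k})$ being taken independent of the Feller coupling. Slutsky's theorem then reduces the theorem to (i) a Gaussian limit for the centered $\tilde{A}_n$, and (ii) negligibility of $R_n$ (and of the difference of means $\E{A_n}-\E{\tilde{A}_n}$) at scale $\sqrt{V_n}$.

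For step (i) I would invoke Lyapunov's CLT. The compound Poisson variance identity gives
$$\var(S_m) = \frac{\theta}{m}\,\E{X_{m,1}^2}, \qquad \sum_{m=1}^n \var(S_m) = \theta V_n,$$
which, combined with the hypothesis $V_n\to\infty$, produces the correct normalization. A Rosenthal-type inequality for compound Poisson sums yields, for any $p>2$,
$$\E{|S_m - \E{S_m}|^p} \; \leq \; C_p\Bigl( \frac{\theta}{m}\,\E{|X_{m,1}|^p} + \Bigl(\frac{\theta}{m}\E{X_{m,1}^2}\Bigr)^{p/2}\Bigr),$$
and both pieces sum to $o(V_n^{p/2})$ under hypothesis \eqref{eq:inf_var_cond} (for the second piece one uses that $\theta/m \leq \theta$ and concavity of $s\mapsto s^{p/2}$). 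This verifies Lyapunov's condition, so $(\tilde{A}_n-\E{\tilde{A}_n})/\sqrt{\theta V_n}\to \mathcal{N}(0,1)$.

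For step (ii) I would use the Feller coupling estimates of Lemmas~\ref{lem_bound_Feller} and~\ref{lem_bound_Feller_2}. Since $(X_{m,k})$ is independent of $(\xi_i)$, Wald's identity gives
$$\E{|R_n|} \; \leq \; \sum_{m=1}^n \E{|C_m^{(n)}-Y_m|}\,\E{|X_{m,1}|} \; \leq \; \sum_{m=1}^n \Bigl(\frac{K(\theta)}{n} + \frac{\theta}{n}\Psi_n(m)\Bigr)\E{|X_{m,1}|},$$
and the same bound (up to constants) applies to $|\E{A_n}-\E{\tilde{A}_n}|$. Using $\Psi_n(m)\leq K_1(1-m/n)^{\theta-1}$ for $m<n$ together with the H\"older estimate $\E{|X_{m,1}|}\leq (\E{|X_{m,1}|^p})^{1/p}\, m^{1/p}$ (after pulling out $1/m$ to get the form of \eqref{eq:inf_var_cond}), and then applying H\"older's inequality in $m$, produces two $\ell^q$ factors. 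The hypothesis $p>1/\theta$ is exactly what is needed so that the conjugate exponent $q=p/(p-1)$ satisfies $q(1-\theta)<1$, making the singular sum $\sum_m (1-m/n)^{q(\theta-1)}$ of order $n$, while \eqref{eq:inf_var_cond} converts the remaining factor into $o(V_n^{p/2}/n)$. The net result is $\E{|R_n|}+|\E{A_n}-\E{\tilde{A}_n}| = o(\sqrt{V_n})$, so by Markov's inequality $R_n/\sqrt{\theta V_n}\to 0$ in probability.

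The principal technical obstacle is the H\"older bookkeeping in step (ii): one must marry the blow-up of $\Psi_n(m)$ near $m=n$ (where the crude bound is only $n^{1-\theta}$) with the possibly unbounded growth of $\E{|X_{m,1}|^p}$ in such a way that both hypotheses $p>2$ and $p>1/\theta$ are genuinely used, and verify that the chosen conjugate exponents are compatible. Once this estimate is in hand, Slutsky's theorem combines the CLT for $\tilde{A}_n$ with the negligibility of $R_n$ to yield the stated convergence.
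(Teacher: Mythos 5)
Your proposal is correct and is essentially the paper's own argument: the Feller coupling replaces the cycle counts $C_m$ by the independent Poisson variables $Y_m$, the $L^1$-difference of the two sums is controlled via Lemma~\ref{lem_bound_Feller}, Lemma~\ref{lem_bound_Feller_2} and H\"older's inequality (with $p>1/\theta$ entering exactly as you describe), and the resulting sum of independent compound Poisson blocks is handled by the Lyapunov CLT, which is precisely the sketch the paper gives (deferring details to Theorem 6.2 of the cited work of Maples--Nikeghbali--Zeindler). One cosmetic remark: for the second Rosenthal term, $s\mapsto s^{p/2}$ is convex (not concave) for $p>2$; the clean justification is Jensen's inequality $\bigl(\E{X_{m,1}^2}\bigr)^{p/2}\le\E{\left|X_{m,1}\right|^p}$ combined with $m^{-p/2}\le m^{-1}$, after which \eqref{eq:inf_var_cond} yields the required $o\bigl(V_n^{p/2}\bigr)$.
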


\begin{proof} This theorem can be obtained immediately from Theorem 6.2 in \cite{MaNiZe11} by setting $X_{m,k}= k \Delta_k$.
For completeness, we give a short overview over the proof.
The proof base on the Feller coupling (see Section~\ref{sec_Feller_coupling}).
This ensures that the random variables $C_m$ and $Y_m$ are defined on the same space and can be compared with Lemma~\ref{lem_bound_Feller}.
The strategy of the proof is the following: define

\begin{align}
 B_n=\sum_{m=1}^n\sum_{k=1}^{Y_m}X_{m,k},
\end{align}
and show that $A_n$ and $B_n$ have the same asymptotic behavior after normalization.
This can be done for instance by showing that $\E{|A_n-B_n|} = O(1)$. We have
\begin{eqnarray}
&&\E{|A_n-B_n|}
=
\E{\left|\sum_{m=1}^n \left(\sum_{k=1}^{C_m}X_{m,k} - \sum_{k=1}^{Y_m} X_{m,k} \right) \right|}\nonumber\\
&&\quad
\leq
\sum_{m=1}^n\E{\left|\sum_{k=(C_m\wedge Y_m)+1}^{C_m\vee Y_m}X_{m,k}\right|}
\leq
\sum_{m=1}^n\E{\sum_{k=(C_m\wedge Y_m)+1}^{C_m\vee Y_m} \E{\left| X_{m,k}\right|}  }\nonumber\\
&&\quad\leq
\sum_{m=1}^n \E{|X_{m,1}|}\E{\left|{C_m}-{Y_m}\right|}\qquad
\end{eqnarray}
By Lemma~\ref{lem_bound_Feller}, there exists for any $\theta>0$ a constant $K(\theta)$, such that
\begin{equation}\label{eq_proof_aux_thm_1}
 \sum_{m=1}^n\E{|X_{m,1}|}\E{|C_m-Y_m|}
 \leq
 \frac {K(\theta)}n\sum_{m=1}^n\E{|X_{m,1}|}+\frac \theta n\sum_{m=1}^n\Psi_n(m)\E{|X_{m,1}|}.
\end{equation}
One now can show with the H\"older inequality, Lemma~\ref{lem_bound_Feller_2} and the assumptions of the theorem that this quantity
is indeed $O(1)$. It is thus enough to consider only $B_n$, but $B_n$ is just a sum of independent random variables and the 
theorem follows from the Lyapunov CLT.
\end{proof}

\subsection{Multi dimensional central limit theorems}

In this section, we replace the random variables $X_{m,k}$ in Theorem~\ref{thm_trace_infinite_case} by $\R^d$-valued random variables $\overline{X}_{m,k}=(X_{m,k,1},\dots,X_{m,k,d})$ and prove a CLT for
\begin{equation}
\overline{A}_{n,d}
:=
\sum_{m=1}^n\sum_{k=1}^{C_m}\overline{X}_{m,k}.
\end{equation}
As before, we assume that $\overline{X}_{m,k}$ is a sequence of independent random variables such that $\overline{X}_{m,k} \stackrel{d}{=} \overline{X}_{m,1}$ and all $\overline{X}_{m,k}$ and $\sigma\in S_n$ are independent. We will prove the following theorem:

\begin{theorem}
\label{thm_general_CLT_ddim}
Assume there exists constants $V_n$ with $V_n \to \infty$ as $n\to\infty$ 
and there exists constants $\sigma_{j,\ell}$ such that for all $1\leq j,\ell \leq d$
  %
%
\begin{align}\label{eq:V_N_ddim}
 \sum_{m=1}^n \frac{1}{m} \E{X_{m,1,j}X_{m,1,\ell}} \sim \sigma_{j,\ell} \cdot V_n \qquad (n\to\infty).
\end{align}
Assume further that there exists a $p>\max\left\{ \frac{1}{\theta} , 2 \right\}$ such
        that for each $1\leq j \leq d$
\begin{equation}\label{eq:inf_var_cond_ddim}
\sum_{m=1}^{n} \frac{1}{m}  \E{\left|X_{m,1,j}\right|^p} =
        o\left(\bigl(V_n\bigr)^{p/2}\right)
\end{equation}
Then the distribution of
\begin{align}
\frac{\overline{A}_{n,d}-\E{\overline{A}_{n,d}}}{\sqrt{\theta\,V_n}}
\end{align}
converges in law to the normal distribution $\mathcal{N}(0,\Sigma)$,
where $\Sigma$ is the covariance matrix $(\sigma_{i,j})_{1\leq i,j\leq d}$.
\end{theorem}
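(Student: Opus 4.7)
The plan is to reduce the multidimensional statement to the one-dimensional Theorem~\ref{thm_trace_infinite_case} via the Cramér--Wold device. For an arbitrary $\overline{t}=(t_1,\dots,t_d)\in\R^d$, set $X^{(\overline{t})}_{m,k}:=\sum_{j=1}^d t_j X_{m,k,j}$. These are real-valued, independent across $(m,k)$, identically distributed in $k$, and independent of $\sigma$, and they satisfy
\begin{equation*}
\overline{t}\cdot\overline{A}_{n,d} \;=\; \sum_{m=1}^n\sum_{k=1}^{C_m} X^{(\overline{t})}_{m,k} \;=:\; A^{(\overline{t})}_n,
\end{equation*}
which is exactly the kind of sum handled by Theorem~\ref{thm_trace_infinite_case}. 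It therefore suffices to show that $(A^{(\overline{t})}_n-\E{A^{(\overline{t})}_n})/\sqrt{\theta V_n}$ converges in law to $\mathcal{N}(0,\overline{t}^{\,T}\Sigma\overline{t})$ for every $\overline{t}$.

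The first step is to check the variance asymptotics: by bilinearity and hypothesis~\eqref{eq:V_N_ddim},
\begin{equation*}
\sum_{m=1}^n \frac{1}{m}\E{\bigl(X^{(\overline{t})}_{m,1}\bigr)^2}
= \sum_{j,\ell=1}^d t_j t_\ell \sum_{m=1}^n \frac{1}{m}\E{X_{m,1,j}X_{m,1,\ell}}
\sim (\overline{t}^{\,T}\Sigma\overline{t})\, V_n.
\end{equation*}
Next, the Lyapunov-type condition~\eqref{eq:inf_var_cond} transfers from the coordinates: by the power-mean inequality $|X^{(\overline{t})}_{m,1}|^p\leq d^{p-1}\sum_j |t_j|^p |X_{m,1,j}|^p$, so summing in $m$ and using \eqref{eq:inf_var_cond_ddim} componentwise gives $\sum_m \frac{1}{m}\E{|X^{(\overline{t})}_{m,1}|^p}=o(V_n^{p/2})$. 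When $\overline{t}^{\,T}\Sigma\overline{t}>0$, one then applies Theorem~\ref{thm_trace_infinite_case} to the scalars $X^{(\overline{t})}_{m,k}$ with $V_n$ replaced by $(\overline{t}^{\,T}\Sigma\overline{t})V_n$ (still tending to infinity) to obtain the desired one-dimensional CLT.

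The only case not directly covered by Theorem~\ref{thm_trace_infinite_case} is the degenerate one $\overline{t}^{\,T}\Sigma\overline{t}=0$, which I would handle by repeating the Feller-coupling step from the proof of Theorem~\ref{thm_trace_infinite_case}. Define $B^{(\overline{t})}_n := \sum_{m=1}^n\sum_{k=1}^{Y_m} X^{(\overline{t})}_{m,k}$ on the coupled space. The same Hölder plus Lemmas~\ref{lem_bound_Feller} and~\ref{lem_bound_Feller_2} argument gives $\E{|A^{(\overline{t})}_n-B^{(\overline{t})}_n|}=O(1)$, while the compound-Poisson variance formula yields
\begin{equation*}
\var\bigl(B^{(\overline{t})}_n\bigr) = \sum_{m=1}^n \frac{\theta}{m}\,\E{\bigl(X^{(\overline{t})}_{m,1}\bigr)^2} = o(V_n).
\end{equation*}
Together these imply $(A^{(\overline{t})}_n-\E{A^{(\overline{t})}_n})/\sqrt{\theta V_n}\to 0$ in probability, matching the degenerate Gaussian $\mathcal{N}(0,0)$. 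Cramér--Wold then delivers the joint convergence to $\mathcal{N}(0,\Sigma)$.

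The main obstacle I expect is precisely this degenerate case, because Theorem~\ref{thm_trace_infinite_case} is not applicable verbatim when $\overline{t}$ lies in the null space of $\Sigma$. Everything else is bookkeeping: verifying that the moment hypotheses pass to arbitrary linear combinations, and tracking the rescaling $V_n\mapsto (\overline{t}^{\,T}\Sigma\overline{t})V_n$ so that the limiting variance comes out exactly as $\overline{t}^{\,T}\Sigma\overline{t}$.
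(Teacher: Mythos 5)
Your argument is essentially the paper's own proof: Cram\'er--Wold applied to the scalars $H_{m,k}=\sum_{j=1}^d t_j X_{m,k,j}$, with the variance asymptotics obtained by bilinearity from \eqref{eq:V_N_ddim} and the $p$-th moment condition transferred coordinatewise via $|x+y|^p\le 2^{p-1}(|x|^p+|y|^p)$, followed by an appeal to Theorem~\ref{thm_trace_infinite_case}. The only addition is your explicit treatment of the degenerate directions $\overline{t}^{\,T}\Sigma\,\overline{t}=0$ through the Feller coupling and a Chebyshev bound, a case the paper's proof passes over in silence when invoking Theorem~\ref{thm_trace_infinite_case}; this is a sound refinement of the same method, not a different route.
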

%
%
%
%
%

\begin{proof}
The theorem follows from the Cramer-Wold theorem if we can show for each $\overline{t}=(t_1,\dots, t_d)\in\R^d$
\begin{align}
 \overline{t}\cdot \frac{\overline{A}_n-\E{\overline{A}_n}}{\sqrt{V_n}}
 \stackrel{d}{\longrightarrow} \mathcal{N}(0,\theta\overline{t}\Sigma\overline{t}^T).
\end{align}
A simple computation shows that
\begin{align}
 \overline{t} \cdot \overline{A}_n
 =
 \sum_{m=1}^n\sum_{k=1}^{C_m} H_{m,k}^{(d)}
 \end{align}
with
\begin{equation}
H_{m,k}^{(d)}:=H_{m,k}=\sum_{j=1}^dt_jX_{m,k,j}.
\end{equation}
We now show that $H_{m,k}$ fulfills the conditions of Theorem~\ref{thm_trace_infinite_case}.
Clearly, $H_{m,k}$ is a sequence of independent random variables, $H_{m,k} \stackrel{d}{=}H_{m,1}$ and $H_{m,k}$ is independent of $C_b$ for all $m,k,b$.
We get 
\begin{align}
\sum_{m=1}^n \frac{1}{m} \E{H_{m,1}^2}
  &=
 \sum_{m=1}^n \frac{1}{m} \sum_{j,\ell =1}^d t_j t_\ell
  \E{X_{m,1,j}X_{m,1,\ell}}\nonumber\\
  &\sim
  V_n\sum_{j,\ell =1}^d t_j t_\ell \sigma_{j,\ell}
  =
  V_n \cdot\overline{t}\Sigma\overline{t}^T
\end{align}
with $\Sigma = (\sigma_{j,\ell})_{1 \leq j,\ell \leq d}$. This shows that \eqref{eq:V_N} is fulfilled.
%
%
We now look at \eqref{eq:inf_var_cond}. We use that $|x+y|^p \leq 2^{p-1}(|x|^p+|y|^p)$ for $p\geq 1$ and get
\begin{align*}
 &\sum_{m=1}^n \frac{1}{m}\E{|H_{m,1}|^p}
  \leq K_p
  \sum_{m=1}^n \frac{1}{m}\E{ \sum_{j=1}^d |t_j|^p|X_{m,1,j}|^p}
 =
 o\left(\bigl(V_n\bigr)^{p/2}\right)
 \end{align*}
where $K_p$ depends only on $p$ and $d$. This concludes the proof of Theorem~\ref{thm_general_CLT_ddim}.
\end{proof}

{\bf Remark:}
It is clear that Theorem~\ref{thm_general_CLT_ddim} can be used for complex random variables, by identifying $\C$ by $\R^2$.

\section{Results on the Characteristic Polynomial and Multiplicative Class Functions}
\label{sec_examples}

In this section we apply the theorems in Section~\ref{sec_CLT_on_Sn} to the characteristic polynomial and multiplicative class functions.
We start by considering in Section~\ref{sec_CLT_in_1dim} the real and imaginary parts separately and give results on the joint behavior 
and the behavior at different points in in Section~\ref{sec_CLT_in_ddim}.

Recall that we study the characteristic polynomial in terms of $Z_{n,z}(x)$ and recall the definitions for the multiplicative class functions $W^{1,n}_z(f)$ and $ W^{2,n}_z(f)$, given by Definitions~\ref{def_W1_poly} and~\ref{def_W2}.
As in Definition~\ref{def_log_Zn,Wn_1dim_1}, it is natural to choose the branch of logarithm as follows:

\begin{definition}
\label{def_log_Zn,Wn_1dim}
Let $x = e^{2 \pi i \varphi} \in \mathbb{T}$ be a fixed number, $z$ a $\mathbb{T}$--valued random variable and $f:\mathbb{T} \to \C$  a real analytic function. Furthermore, let $(z_{m,k})_{m,k=1}^\infty$ and $(T_{m,k})_{m,k=1}^\infty$ be two sequences of independent random variables, independent of $\sigma \in S_n$ with
\begin{align}
  z_{m,k} \stackrel{d}{=} z \ \text{ and } \  T_{m,k} \stackrel{d}{=} \prod_{j=1}^m z_{j,k}.
\end{align}
We then set
\begin{align}
 \label{eq_log_Zn}
  \log\bigl(Z_{n,z}(x) \bigr)
  &:=
  \sum_{m=1}^n\sum_{k=1}^{C_m} \log (1-x^{-m}T_{m,k}),\\
  \label{eq_log_W1}
  w^{1,n}(f)(x)
  :=
  \log\left( W^{1,n}_z(f)(x) \right)
  &:=
  \sum_{m=1}^n\sum_{k=1}^{C_m} \log\bigl( f(x^m z_{m,k} )\bigr),\\
  \label{eq_log_W2}
  w^{2,n}(f)
  :=
  \log\left( W^{2,n}_z(f)(x) \right)
  &:=
  \sum_{m=1}^n\sum_{k=1}^{C_m} \log\bigl( f(x^mT_{m,k})\bigr).
\end{align}
%
\end{definition}
%

%
%

\subsection{Limit behavior at $1$ point}
\label{sec_CLT_in_1dim}

The following results are important cases for which the conditions in Theorem~\ref{thm_trace_infinite_case} are satisfied. We will show the following central limit theorem results for multiplicative class functions.

\begin{theorem}
\label{thm_CLT_W1_1dim_general}
Let $S_n$ be endowed with the Ewens distribution with parameter $\theta$, $f$ be a non zero real analytic function, $z$ a $\mathbb{T}$-valued random variable and $x = e^{2\pi i \varphi} \in \mathbb{T}$ be not a root of unity, i.e. $x^m \neq 1 $ for all $m\in\Z$.

Suppose that one of the following conditions is satisfied,

\begin{itemize}
  \item $z$ is uniformly distributed,
  \item $z$ is absolutely continuous with bounded, Riemann integrable density,
  \item $z$ is discrete, there exists a $\rho>0$ with $z^\rho \equiv 1$, all zeros of $f$ are roots of unity and $x$ is of finite type (see Definition~\ref{def_finite_type}).
\end{itemize}

Then,
   \begin{align}
  \label{eq CLT_z_w1_real}
    \frac{\Re\left(w^{1,n}(f)\right)}{\sqrt{\log n}}
    -
    \theta\cdot m_R(f)\sqrt{\log n} 
    \stackrel{d}{\longrightarrow}
     N_R,\\
      \label{eq CLT_z_w1_im}
     \frac{\Im\left(w^{1,n}(f)\right)}{\sqrt{\log n}}
    -
    \theta\cdot m_I(f)\sqrt{\log n}
    \stackrel{d}{\longrightarrow}
     N_I
  \end{align}
  with $N_R\sim \mathcal{N} \left(0,\theta V_R(f) \right), N_I\sim \mathcal{N} \left(0,\theta V_I(f) \right)$ and
  \begin{align}
    m_R(f)
    &=
    \Re\left( \int_{0}^1 \log\bigl(f(e^{2 \pi i \phi}) \bigr) \ d\phi \right), \quad
    V_R(f)
    =
    \int_{0}^1 \log^2\bigl|f(e^{2 \pi i \phi}) \bigr| \ d\phi,\\
    m_I(f)
    &=
    \Im\left( \int_{0}^1 \log\bigl(f(e^{2 \pi i \phi}) \bigr) \ d\phi \right),\quad
    V_I(f)
    =
    \int_{0}^1 \arg^2\bigl(f(e^{2 \pi i \phi}) \bigr) \ d\phi.
  \end{align}
\end{theorem}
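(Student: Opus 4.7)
The plan is to apply Theorem~\ref{thm_trace_infinite_case} separately to the real and imaginary parts of $w^{1,n}(f)$. Setting
\begin{equation*}
X_{m,k}^R := \log\bigl|f(x^m z_{m,k})\bigr|, \qquad X_{m,k}^I := \arg f(x^m z_{m,k}),
\end{equation*}
we have $\Re w^{1,n}(f) = \sum_{m=1}^n\sum_{k=1}^{C_m} X_{m,k}^R$ and $\Im w^{1,n}(f) = \sum_{m=1}^n\sum_{k=1}^{C_m} X_{m,k}^I$, each of which matches the form of the random variable $A_n$ of Theorem~\ref{thm_trace_infinite_case}: because the $z_{m,k}$ are i.i.d.\ and independent of $\sigma$, the $X_{m,k}^R$ are independent, equal in law for fixed $m$, and independent of $\sigma$. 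To invoke Theorem~\ref{thm_trace_infinite_case} one must check the variance growth $V_n^R := \sum_{m=1}^n \tfrac{1}{m}\E{(X_{m,1}^R)^2} \sim V_R(f)\log n$, the $p$-th moment bound of the theorem for some $p>\max\{2,1/\theta\}$, and the centering estimate $\E{\Re w^{1,n}(f)} = \theta\, m_R(f)\log n + o(\sqrt{\log n})$, and likewise for the imaginary part.

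Since $\E{\Re w^{1,n}(f)} = \sum_{m=1}^n \E{C_m}\, \E{X_{m,1}^R}$ and $\E{C_m}$ is asymptotically $\theta/m$ in Ces\`aro average (the Ewens expectation $\E{l_\sigma}$ grows like $\theta\log n$, controllable via the Feller coupling of Section~\ref{sec_Feller_coupling} and Lemma~\ref{lem_bound_Feller}), all three requirements reduce to a Ces\`aro statement of the shape
\begin{equation*}
\frac{1}{\log n}\sum_{m=1}^n \frac{1}{m}\, \E{g(x^m z)} \;\longrightarrow\; \int_0^1 g\bigl(e^{2\pi i\phi}\bigr)\,d\phi
\end{equation*}
for $g \in \{\log|f|,\ \arg f,\ \log^2|f|,\ \arg^2 f,\ |\log f|^p\}$, together with a quantitative error estimate sharper than the $O(1)$ level for the centering.

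The verification of this Ces\`aro convergence is where the three hypotheses on $z$ enter. If $z$ is uniform on $\mathbb{T}$, then $x^m z \stackrel{d}{=} z$ for every $m$, so $\E{g(x^m z)} = \int_0^1 g(e^{2\pi i\phi})\,d\phi$ is constant in $m$ and the statement is immediate; finiteness of the relevant integrals follows because a nonzero real analytic $f$ has at most logarithmic singularities of $\log|f|$ on $\mathbb{T}$. If $z$ has a bounded Riemann integrable density, then $\E{g(x^m z)}$ is $g$ integrated against a translate of this density, and the convergence follows from Weyl equidistribution of $(x^m)$ (Lemma~\ref{lem_all_irrational_uniform_distributed}) after truncating $g$ in a shrinking neighborhood of the zeros of $f$. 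If $z$ is supported on the $\rho$-th roots of unity, then $\E{g(x^m z)}$ is a finite average $\sum_{\zeta^\rho=1} \mathbb{P}(z=\zeta)\, g(x^m\zeta)$; here one applies the extended Koksma--Hlawka inequality (Theorem~\ref{thm_koksma_inequality_2}) to each orbit $(x^m\zeta)_m$, using the polynomial discrepancy bound of Theorem~\ref{thm_estimate_Dn_finite_type} for $x$ of finite type together with the hypothesis that the zeros of $f$ lie at roots of unity.

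The hard part is this last case. Because $\log f$ blows up precisely at its zeros, which are roots of unity, the singular set of $g$ coincides with the points the orbit $(x^m\zeta)$ can approach. The finite-type condition on $x$ yields a Diophantine lower bound that keeps $x^m\zeta$ away from the singular roots of unity at a polynomial rate, while Theorem~\ref{thm_koksma_inequality_2} demands a cutoff $\delta$ small enough to recover the full integral but large enough that the discrepancy term remains manageable. Balancing $\delta$ as a negative power of $n$ against the polynomial discrepancy bound of Theorem~\ref{thm_estimate_Dn_finite_type} yields the Ces\`aro limits with enough uniformity to guarantee both the variance asymptotics $V_n^R \sim V_R(f)\log n$ and the $o(\sqrt{\log n})$ centering correction. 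Once these are established, Theorem~\ref{thm_trace_infinite_case} delivers the desired one-dimensional central limit theorems for $\Re w^{1,n}(f)$ and $\Im w^{1,n}(f)$ with limiting variances $\theta V_R(f)$ and $\theta V_I(f)$, completing the proof.
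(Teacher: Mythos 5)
Your proposal follows essentially the same route as the paper: reduce $\Re\,w^{1,n}(f)$ and $\Im\,w^{1,n}(f)$ to the form of $A_n$ and apply Theorem~\ref{thm_trace_infinite_case}, verify the variance and $p$-th moment conditions via Ces\`aro limits of $\E{g(x^m z)}$ (trivially in the uniform case, by Weyl equidistribution of $\fracpart{m\varphi}$ against the translated density in the absolutely continuous case, and via the extended Koksma--Hlawka inequality of Theorem~\ref{thm_koksma_inequality_2} with the finite-type discrepancy bound and a cutoff $\delta=\delta(n)$ in the discrete case), and control the centering through the Feller coupling and Lemma~\ref{lem_bound_Feller}. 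This matches the paper's proof in both structure and the key estimates, so no substantive differences or gaps to report.
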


\begin{theorem}
\label{thm_CLT_W2_1dim_general}
Let $S_n$ be endowed with the Ewens distribution with parameter $\theta$, $f$ be a non zero real analytic function, $z$ a $\mathbb{T}$-valued random variable and $x \in \mathbb{T}$ be not a root of unity.

Suppose that one of the following conditions is satisfied,

\begin{itemize}
  \item $z$ is uniformly distributed,
  \item $z$ is absolutely continuous with density $g:[0,1] \to \R_+$, such that
\begin{align}
g(\phi)
=
\sum_{j\in\Z} c_j e^{2 \pi i j \phi}
\ \text{ with } |c_j|<1 \text{ for } j\neq 0 \text{ and } \sum_{j\in\Z} |c_j| < \infty.
\end{align}
\item $z$ is discrete, there exists a $\rho>0$ with $z^\rho \equiv 1$, all zeros of $f$ are roots of unity, $x$ is of finite type (see Definition~\ref{def_finite_type}) and for each $1\leq k\leq \rho$, 
\begin{align}
  \Pb{z = e^{2 \pi i k/\rho}} = \frac{1}{\rho} \sum_{j=0}^{\rho-1} c_j e^{2 \pi i jk}
\ \text{ with } |c_j|<1 \text{ for } j\neq0.
\end{align}
\end{itemize}

Then,
   \begin{align}
  \label{eq CLT_z_w2_real}
    \frac{\Re\left(w^{2,n}(f)\right)}{\sqrt{\log n}}
    -
     \theta\cdot m_R(f)\sqrt{\log n}
    \stackrel{d}{\longrightarrow}
     N_R,\\
      \label{eq CLT_z_w2_im}
     \frac{\Im\left(w^{2,n}(f)\right)}{\sqrt{\log n}}
    -
    \theta \cdot m_I(f)\sqrt{\log n} 
    \stackrel{d}{\longrightarrow}
     N_I,
  \end{align}
with $m_R(f), m_I(f), N_R$ and $N_I$ as in Theorem~\ref{thm_CLT_W1_1dim_general}.
\end{theorem}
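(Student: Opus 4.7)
The plan is to apply the one-dimensional symmetric-group CLT, Theorem~\ref{thm_trace_infinite_case}, with $X_{m,k} := \Re\log f(x^m T_{m,k})$ for the real part and $X_{m,k} := \Im\log f(x^m T_{m,k})$ for the imaginary part. This reduces the theorem to three asymptotic statements: (i) the variance growth $V_n = \sum_{m=1}^n \frac{1}{m}\E{X_{m,1}^2} \sim V_R(f)\log n$ (respectively $V_I(f)\log n$); (ii) the Lyapunov bound $\sum_{m=1}^n \frac{1}{m}\E{|X_{m,1}|^p} = o(V_n^{p/2})$ for some $p>\max\{1/\theta,2\}$; and (iii) $\E{w^{2,n}(f)} = \theta \sum_{m=1}^n\frac{1}{m}\E{X_{m,1}} + o(\sqrt{\log n}) = \theta m_R(f)\log n + o(\sqrt{\log n})$, which produces the centering appearing in the theorem after division by $\sqrt{\log n}$. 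All three reduce to computing $\E{h(x^m T_{m,1})}$ for $h\in\{\Re\log f,\,(\Re\log f)^2,\,|\Re\log f|^p\}$ (and the $\Im$ analogues), i.e.\ to understanding the distribution of $x^m T_{m,1}$ on $\mathbb{T}$.

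The first two cases are handled via Fourier analysis. Under the uniform hypothesis (Case~1), $T_{m,1}$ is itself uniform on $\mathbb{T}$ for every $m$, so $\E{h(x^m T_{m,1})} = \int_0^1 h(e^{2\pi i\phi})\,d\phi$ is a constant in $m$; this constant is finite for $h=|\Re\log f|^p$ because $\log|f|$ has only logarithmic singularities. The three assertions then follow from $\sum_{m\le n}1/m\sim\log n$. Under the density hypothesis (Case~2), the density of $T_{m,1}$ is the $m$-fold convolution of $g$ and has Fourier coefficients $c_j^m$, giving
\[
\E{h(x^m T_{m,1})} \;=\; \int_0^1 h(e^{2\pi i\phi})\,d\phi \;+\; \sum_{j\neq 0} c_j^m\,\hat h(x^m;j),
\]
with $\hat h(y;j):=\int_0^1 h(y e^{2\pi i\phi})e^{2\pi i j\phi}d\phi$ bounded uniformly in $y$. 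Combining with $\sum_{m\ge 1}|c_j|^m/m = -\log(1-|c_j|)$ and $\sum_j |c_j|<\infty$, the correction to the uniform integral contributes only $O(1)$ to each of the three sums.

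Case~3 is the main work. By discrete Fourier inversion using $\E{T_{m,1}^\ell}=(\E{z^\ell})^m$ and the assumption $|c_j|<1$ for $j\neq 0$, one has $\Pb{T_{m,1}=e^{2\pi ik/\rho}} = \rho^{-1} + O(c^m)$ for some $c<1$. Substituting, each $\E{h(x^m T_{m,1})}$ equals $\rho^{-1}\sum_{k=0}^{\rho-1} h(x^m e^{2\pi ik/\rho})$ up to an exponentially small error, so one is reduced to analysing sums of the form
\[
\sum_{m=1}^n \frac{1}{m}\, h_k(\fracpart{m\varphi}), \qquad h_k(\phi):=h(e^{2\pi i(\phi+k/\rho)}),
\]
for each fixed $k$. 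Since $x$ is of finite type, Theorem~\ref{thm_estimate_Dn_finite_type} gives $D_n^*=O(n^{-\alpha})$ for the Kronecker sequence of $\varphi$; combining this with the refined Koksma--Hlawka estimate of Theorem~\ref{thm_koksma_inequality_2} (applied on the set $Q=[0,1]\setminus\bigcup(\text{$\delta$-neighbourhoods of zeros of }f)$) and an Abel summation, each such sum is $(\int_0^1 h_k)\log n + o(\sqrt{\log n})$, which after averaging over $k$ and invoking the invariance of the integral under rotation by $e^{2\pi ik/\rho}$ produces the desired $m_R(f)\log n$, $V_R(f)\log n$ and $o((\log n)^{p/2})$.

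The principal obstacle is precisely this last step: $\log|f|$ and $\arg f$ are only improperly Riemann integrable, so the classical Koksma--Hlawka inequality fails and one must cut out $\delta_n$-neighbourhoods of the zeros of $f$. The cost is twofold---the truncated boundary integrals grow like $\delta_n|\log\delta_n|^{k}$, while the Hardy--Krause variation of $h_k$ on $Q$ blows up like $|\log\delta_n|^{k}$---and these must be balanced against the polynomial decay $D_n^*=O(n^{-\alpha})$. Choosing $\delta_n=n^{-\beta}$ with $0<\beta<\alpha$ makes every error term vanish; the hypothesis that the zeros of $f$ are roots of unity is essential here, since it places the singularities of $h_k$ at rational points while the orbit $\{m\varphi\}$ stays a Diophantine distance $\gtrsim n^{-\gamma}$ away from them. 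The imaginary part is treated identically with $\log|f|$ replaced by $\arg f$, whose jumps across zeros of $f$ are handled the same way. Once these estimates are in place, the hypotheses of Theorem~\ref{thm_trace_infinite_case} are verified and the conclusion follows.
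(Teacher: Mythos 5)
Your proposal is correct and mirrors the paper's own argument: reduce to Theorem~\ref{thm_trace_infinite_case} with $X_{m,k}=\Re\log f(x^m T_{m,k})$ (resp.\ $\Im$), compute the moment sums $\frac1n\sum_m\E{|X_{m,1}|^p}$ via the Fourier coefficients $c_j^m$ of the density of $T_{m,1}$, and in the discrete case combine the truncated Koksma--Hlawka inequality (Theorem~\ref{thm_koksma_inequality_2}) with the finite-type Diophantine estimates and the partial-summation observation~\eqref{eq:partial_sum}. The only small deviations are cosmetic: in the absolutely continuous case you bound the Fourier correction $\sum_{j\neq0}c_j^m\,\hat h(x^m;j)$ directly rather than via the paper's dominated-convergence argument on $\frac1n\sum_m g^{*m}(\phi-m\varphi)\to1$, and the paper dispatches the uniform case by noting $w^{2,n}\stackrel{d}{=}w^{1,n}$ rather than recomputing.
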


Note that the uniform case is included in the absolutely continuous case. Furthermore, $Z_{n,z}(x)$ is the special case $f(x)=1-x^{-1}$ of $W^2$. Thus, a direct consequence of Theorem~\ref{thm_CLT_W2_1dim_general} is the following corollary, which, after a short computation, covers Proposition~\ref{prop_Zn_general_1dim}:
\begin{corollary}
\label{cor_Zn_general_1dim}
Let $S_n$ be endowed with the Ewens distribution with parameter $\theta$, $z$ a $\mathbb{T}$-valued
random variable and $x \in \mathbb{T}$ be not a root of unity, i.e. $x^m \neq 1 $ for all $m\in\Z$.

Suppose that one of the conditions in Theorem~\ref{thm_CLT_W2_1dim_general} holds, then
 \begin{align}
  \label{eq CLT_z_general_Zn_real}
    &\frac{\Re\left(\log\bigl(Z_{n,z}(x) \bigr)\right)}{\sqrt{\log n}}
    \stackrel{d}{\longrightarrow}
     N_R \quad\text{and}\\
      \label{eq CLT_z_gerneral_Zn_im}
     &\frac{\Im\left(\log\bigl(Z_{n,z}(x) \bigr)\right)}{\sqrt{\log n}}
    \stackrel{d}{\longrightarrow}
     N_I,
  \end{align}
  with $N_R,N_I \sim \mathcal{N} \left(0,\theta\frac{\pi^2}{12} \right)$.
\end{corollary}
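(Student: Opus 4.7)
The plan is to recognise $Z_{n,z}(x)$ as the second multiplicative class function $W^{2,n}_z(f)(x)$ for the specific choice $f(u) = 1 - u^{-1}$. Comparing Definitions~\ref{def_log_Zn,Wn_1dim_1} and~\ref{def_log_Zn,Wn_1dim}, the expression \eqref{eq_log_Zn} for $\log Z_{n,z}(x)$ coincides term-by-term with $w^{2,n}(f)(x)$. The function $f$ is real analytic on $\C^*$ with unique zero at $u=1$ (itself a root of unity), so each of the three hypotheses of Theorem~\ref{thm_CLT_W2_1dim_general} transfers without modification from the corresponding hypothesis on $z$ in Corollary~\ref{cor_Zn_general_1dim}; the assumption that $x$ is not a root of unity also matches. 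Hence Theorem~\ref{thm_CLT_W2_1dim_general} applies and only a computation of the four constants remains.

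For $\phi \in (0,1)$, the factorisation
\[
1 - e^{-2\pi i \phi} = 2 \sin(\pi \phi)\, e^{i(\pi/2 - \pi \phi)}
\]
gives $|f(e^{2\pi i \phi})| = 2 \sin(\pi \phi)$ and $\arg f(e^{2\pi i \phi}) = \pi/2 - \pi \phi$, consistent with the principal-branch convention of Definition~\ref{def_log_Zn,Wn_1dim_1}. Then $m_R(f) = \int_0^1 \log(2\sin(\pi \phi))\, d\phi = 0$ is the classical Jensen integral, while $m_I(f) = \int_0^1 (\pi/2 - \pi \phi)\, d\phi = 0$ is immediate by symmetry. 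Consequently the centring term $\theta \, m(f) \sqrt{\log n}$ in Theorem~\ref{thm_CLT_W2_1dim_general} vanishes, which accounts for the absence of centring in \eqref{eq CLT_z_general_Zn_real}--\eqref{eq CLT_z_gerneral_Zn_im}.

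For the variances, a direct integration yields $V_I(f) = \int_0^1 (\pi/2 - \pi \phi)^2\, d\phi = \pi^2/12$, and $V_R(f) = \int_0^1 \log^2(2 \sin(\pi \phi))\, d\phi$ is obtained from the Fourier expansion $\log(2\sin(\pi \phi)) = -\sum_{k\geq 1} \cos(2\pi k \phi)/k$ together with Parseval's identity, which gives $V_R(f) = \frac{1}{2} \sum_{k\geq 1} 1/k^2 = \pi^2/12$. Substituting these four constants into the conclusion of Theorem~\ref{thm_CLT_W2_1dim_general} produces exactly \eqref{eq CLT_z_general_Zn_real} and \eqref{eq CLT_z_gerneral_Zn_im} with limiting variance $\theta \, \pi^2/12$. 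No serious obstacle is expected: the only nontrivial input is the Parseval evaluation of $V_R(f)$, which is standard, and all remaining work is bookkeeping to verify that the hypotheses of Theorem~\ref{thm_CLT_W2_1dim_general} are met for $f(u) = 1 - u^{-1}$.
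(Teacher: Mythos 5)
Your proposal is correct and follows the paper's proof exactly: identify $Z_{n,z}(x)$ with $W^{2,n}_z(f)$ for $f(u)=1-u^{-1}$, invoke Theorem~\ref{thm_CLT_W2_1dim_general}, and evaluate the four constants. The paper compresses the computation of $V_R(f)=V_I(f)=\pi^2/12$ and $m(f)=0$ into one line (``a simple computation and Jensen's formula''), whereas you spell out the factorization $1-e^{-2\pi i\phi}=2\sin(\pi\phi)e^{i(\pi/2-\pi\phi)}$, the Jensen integral, and the Parseval evaluation of $\int_0^1\log^2(2\sin\pi\phi)\,d\phi$ — all correct.
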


In Corollary~\ref{cor_Zn_general_1dim}, $\Re\left(\log\bigl(Z_{n,z}(x) \bigr)\right)$ and $\Im\left(\log\bigl(Z_{n,z}(x) \bigr)\right)$ are converging to normal random variables without centering. We will see that this is due to the expectation being $o(\sqrt{\log n})$. \\

{\bf Remark:}
  The case $x$ a root of unity can be treated similarly. The computations are indeed much simpler, see for instance \cite{DZ-CLT} for $z\equiv 1$.\\

{\bf Proof of Theorem~\ref{thm_CLT_W1_1dim_general}}
\begin{proof}
It is clear from Definition~\ref{def_log_Zn,Wn_1dim} that the real and imaginary parts of the random variables 
$\log\bigl(Z_{n,z}(x) \bigr)$, $w^{1,n}(f)(x)$ and $w^{2,n}(f)$ have the form \eqref{eq_def_An}. 
We thus can use Theorem~\ref{thm_trace_infinite_case} to study their behaviour as $n\to\infty$.
We show that the assumptions 
of Theorem~\ref{thm_trace_infinite_case} are fulfilled with $V_n\sim \text{const.}\log(n)$ 
in each case considered in Theorem~\ref{thm_CLT_W1_1dim_general}.
For this, we use the following observation: If $(a_m)_{m\in\N}$ is a sequence of complex numbers, then 
\begin{align}
\label{eq:partial_sum}
 \frac{1}{n}\sum_{m=1}^n a_m \to E\ \Longrightarrow \sum_{m=1}^n \frac{a_m}{m} = E\log(n) + O(1) \qquad(n\to\infty).
\end{align}
This statement follows with partial summation and a direct computation. It is thus enough to show that
we have for $p=2$ and some $p>2$ 
\begin{align}
\label{eq:moment_sum_X_m}
\frac{1}{n} \sum_{m=1}^{n}   \E{\left|X_{m,1}\right|^p} \ \longrightarrow  \ E_p 
\end{align}
as $n\to\infty$ with $E_p\in\R$ depending on $p$ and the case studied.

 \ \newline
{\it Uniform measure on the unit circle.}

We start with the simple case where $z$ is uniformly distributed. 
We begin with the real part and put $X_{m,k} = \log|f(x^mz_{m,k})|$. We use that  $x^m z_{m,k} \stackrel{d}{=} z_{m,1}$ for $m$ fix and get
\begin{align}
   \label{eq_z_uniform_w1}
\E{|X_{m,1}|^p}= \E{\bigl|\log|f(x^mz_{m,1})|\bigr|^p}
=
\int_0^1 \bigl|\log\bigl|f(e^{2 \pi i \phi})\bigr|\bigr|^p \ d\phi.
\end{align}
We have to justify that the integral in \eqref{eq_z_uniform_w1} exists. Since $f$ is a non-zero real analytic function, 
we have for $x_0 = e^{2\pi i \phi_0}$ being a zero of $f$, 
\begin{align}
  \log\bigl| f(e^{2\pi i \phi})| \sim K \log|\phi -\phi_0|,
\end{align}
as $\phi \to\phi_0$ and a $K>0$. The integral in \eqref{eq_z_uniform_w1} now exists for each $p\geq 1$ 
since $ \log|\phi -\phi_0|^p$ is integrable in a neighbourhood of $\phi_0$ for each $p\geq 1$ and $f$ has at most finitely many zeros. 

We thus have obviously for each $p\geq 1$
\begin{align}
\label{eq:uniform_moment_p}
\frac{1}{n} \sum_{m=1}^{n}   \E{\left|X_{m,1}\right|^p} = \int_0^1 \bigl|\log\bigl|f(e^{2 \pi i \phi})\bigr|\bigr|^p \ d\phi.
\end{align}
The observation in \eqref{eq:partial_sum} together with \eqref{eq:uniform_moment_p} for 
$p=2$ and any $p>2$ implies that the assumptions Theorem~\ref{thm_trace_infinite_case} are fulfilled with $V_n \sim V_R(f) \log(n)$.
%
%
%
%
It remains to compute the asymptotic behaviour of the expectation.
We use the Feller-coupling (see Section~\ref{sec_Feller_coupling}) and get
%
\begin{align}
  &\E{\Re\left(w^{1,n}(f)\right)}
  =
  \sum_{m=1}^n \E{C_m} \E{\log\bigl|f(x^mz_{m,1})\bigr|}\nonumber\\ 
  &=
  \left(\sum_{m=1}^n \E{Y_m} \E{\log\bigl|f(x^mz_{m,1})\bigr|}\right)
 +  
\left(\sum_{m=1}^n \E{C_m - Y_m} \E{\log\bigl|f(x^mz_{m,1})\bigr|}\right)  
\nonumber\\
  &=
  \left(\int_0^1 \log\bigl|f(e^{2 \pi i \phi})\bigr| \ d\phi \right) \left( \sum_{m=1}^n\frac{\theta}{m} \right) + O\left(\sum_{m=1}^n (\E{C_m}-\E{Y_m}) \right)  
 \nonumber\\
  &=
  \theta\cdot m_R(f) \log n  + O(1).
\end{align}

We have used in last equality the inequalities in Lemma~\ref{lem_bound_Feller} and Lemma~\ref{lem_bound_Feller_2} to obtain the $O(1)$ term.
The computations are straightforward and we thus omit them. This completes the computations for the real part.

Consider now the imaginary part with 
$$X_{m,k}=\Im(\log(f(x^mz_{m,k})))=\arg\bigl(f(x^mz_{m,k})\bigr).$$ 
Obviously, $\arg\bigl( f(e^{2\pi i \phi})\bigr)$ is bounded and piecewise real analytic with at most finitely many discontinuity points as function in $\phi$.
Thus all moments of $X_{m,k}$ exists. We therefore can use precise the same argumentation as for the real part and thus omit this computations.
%

{\it Absolute continuous case.}

We start again with the real part and use as before $X_{m,k}:= \log\bigl| f (z_{m,k}x^m)\bigr|$
with $x = e^{2\pi i \varphi}$. 
For simplicity, we write $h(\phi):=\log\bigl| f(e^{2\pi i \phi}) \bigr|$.
We first show that all moments of $X_{m,k}$ exist. We write $g$ for the density of $z_{m,k}$ and obtain for all $p\geq1$,
\begin{align}
  \E{|X_{m,k}|^p} 
&= 
\int_0^1 \left|\log\bigl| f(x^m e^{2\pi i \phi}) \bigr|\right|^p g(\phi) \ d\phi
=
\int_{0}^1 |h(\phi+m\varphi)|^p g(\phi) \ d\phi
\end{align}
We extend the function $g(\phi)$ periodically to $\R$ and get
\begin{align}
 \E{|X_{m,k}|^p} &= \int_0^1 |h(\phi)|^p g(\phi-m\varphi) \ d\phi \leq
\sup_{\alpha\in[0,1]} |g(\alpha)| \int_{0}^1 |h(\phi)|^p \ d\phi 
< \infty.
\end{align}
This is finite since $g$ is by assumption bounded.
We now show that the assumptions of Theorem~\ref{thm_trace_infinite_case} are satisfied 
by computing the asymptotic behaviour of the expression \eqref{eq:moment_sum_X_m} in this case.
By assumption, $x=e^{2i\pi\varphi}$ is not a root of unity and $\varphi$ is thus irrational.
Therefore, $(\fracpart{m\varphi})_{m=1}^\infty$ is uniformly distributed in $[0,1]$. Since $g$ is Riemann integrable, 
we can apply Theorem~\ref{thm_equidist_integral_convergence} for fixed $\phi$ and obtain as $n\to\infty$
\begin{align}
\label{eq:sum_density}
  \frac{1}{n} \sum_{m=1}^n g(\phi-m\varphi)
  =
  \frac{1}{n} \sum_{m=1}^n g(\phi-\fracpart{m\varphi})
  \longrightarrow
  \int_{0}^1 g(\psi) \ d\psi = 1.
\end{align}
Since $g$ is bounded and $h^p$ is integrable, we can use dominated converge and get
\begin{align}\label{eq_W1_hp_expectation}
\frac{1}{n} \sum_{m=1}^n \E{|X_{m,k}|^p}
=
\int_0^1 |h(\phi)|^p  \left(\frac{1}{n} \sum_{m=1}^n g(\phi-m\varphi) \right)\ d\phi
\to
\int_{0}^1 |h(\phi)|^p\ d\phi.
\end{align}
Thus the assumptions of Theorem~\ref{thm_trace_infinite_case} are fulfilled with $V_n=V_R(f)\log(n)$.
It remains to show that the real part of $\E{w^{1,n}(f)}$ can be replaced by $\theta\cdot m_R(f) \log n$.
This computation is similar and we thus omit it. 
Also the computations for the imaginary part are almost the same as for the real part and can be omitted as well.

{\it Discrete $z$.}
We have $z^\rho\equiv1$ for some $\rho\geq 1$ and thus
\begin{align}
  \ \E{\bigl|\log\bigl| f (z_{m,1}x^m)\bigr|\bigr|^p}
&=
\sum_{k=1}^\rho \Pb{z=e^{2\pi i k/\rho}}  \left( \bigl|\log\bigl| f (e^{2\pi i k/\rho}x^m)\bigr|\bigr|^p \right)\nonumber\\
&=
\sum_{k=1}^\rho \Pb{z=e^{2\pi i k/\rho}} \left|h\left(k/\rho + m\varphi\right)\right|^p
\label{eq_discrete_pth_moment}
\end{align}
This sum is well defined since $x= e^{2\pi i \varphi}$ is by assumption not a root of unity and all zeros of $f$ 
are roots of unity. The computation of the expression \eqref{eq:moment_sum_X_m}
is in this case slightly more difficult. We use
that the sequence $(x^m)_{m\in\N}$ is uniformly distributed and 
show here for each $1\leq k\leq \rho$
\begin{align}
\label{eq:sum_diskrte_2}
\frac{1}{n}\sum_{m=1}^n 
 \left|h\left(k/\rho + m\varphi\right)\right|^p
 \longrightarrow
\int_0^1 \left|h(\phi)\right|^p \ d\phi.
\end{align}
The function $h(\phi)$
is not of bounded variation (except when $f$ is zero-free) and we thus use Theorem~\ref{thm_koksma_inequality_2} for $d=1$. 
%
%
%
We omit the details of this computation since they can be founded in \cite[p.14--15]{DZ-CLT} and 
since we use in Section~\ref{sec_CLT_in_ddim} the same argumentation for $d=2$.
It follows with \eqref{eq_discrete_pth_moment} and \eqref{eq:sum_diskrte_2} that
\begin{align}
\frac{1}{n}\sum_{m=1}^n \E{\bigl|\log\bigl| f (z_{m,1}x^m)\bigr|\bigr|^p}
\to
\int_0^1 \left|h(\phi)\right|^p \ d\phi.
\end{align}
The remaining argumentation is the same as in the previous cases and will be thus omitted.

%
%
%
%


\end{proof}

{\bf Proof of Theorem~\ref{thm_CLT_W2_1dim_general}}
\begin{proof}\ We will use here the same argumentation as in the proof of Theorem~\ref{thm_CLT_W1_1dim_general} and thus
verify only \eqref{eq:moment_sum_X_m} for each case considered. 
We will use again the notation $h(\phi):=\log\bigl| f(e^{2\pi i \phi}) \bigr|$ and $x =e^{2\pi i \varphi}$.

{\it z uniform.}\\
Since $T_{m,k}$ is uniformly distributed, we have $T_{m,k}\stackrel{d}{=} z_{m,k}$ and thus $w^{1,n}(f) \stackrel{d}{=} w^{2,n}(f)$. This case is therefore already proven.


{\it z absolutely continuous}

%

We first consider the real part of $w^{2,n}(f)$, i.e.
\begin{align}
  X_{m,k}:= \log\bigl| f (x^m T_{m,k})\bigr|.
\end{align}
The density of $T_{m,k}$ is $g^{*m}$, where $g^{*m}$
is the $m-$times convolution of $g$ with itself and $g$ is the density of $z$.
We first show that all moments of $X_{m,k}$ exists.
By assumption,%
\begin{align}
\label{eq:assume:g_w2}
g(\phi)
=
\sum_{j\in\Z} c_j e^{2 \pi i j \phi}
\ \text{ with } \ |c_j|\leq 1 \text{ for } j\neq 0
\ \text{ and } \ \sum_{j\in\Z} |c_j| < \infty.
\end{align}
The properties of the Fourier transform immediately imply
\begin{align}
\label{eq:assume:g*_w2}
g^{*m}(\phi)
=
\sum_{j\in\Z} c_j^m e^{2 \pi i j \phi}.
\end{align}
%
As before, we first show that all moments of $X_{m,k}$ are finite. We have
\begin{align}
  \E{|X_{m,k}|^p}
&=
\int_{0}^1 |h(\phi+m\varphi)|^p g^{*m}(\phi) \ d\phi
\leq \sum_{j\in\Z} |c_j|^m \int_{0}^1 |h(\phi)|^p \ d\phi
\nonumber\\
&\leq
\sum_{j\in\Z} |c_j| \int_{0}^1 |h(\phi)|^p \ d\phi
<
\infty
\end{align}
This shows that all moments exists and can be bounded independently of $m$.


We now show that for each $p\geq 1$
\begin{align}
    \frac{1}{n} \sum_{m=1}^n \E{|X_{m,k}|^p}
\to
\int_0^1 |h(\phi)|^p \ d\phi.
%
\end{align}
We have
\begin{align}
  \frac{1}{n} \sum_{m=1}^n \E{|X_{m,k}|^p}
&=
  \frac{1}{n} \sum_{m=1}^n \int_0^1 |h(\phi + m\varphi)|^p g^{*m} (\phi) \ d\phi
\nonumber\\
&=
  \int_{0}^1 |h(\phi)|^p \left(\frac{1}{n} \sum_{m=1}^n g^{*m} (\phi - m\varphi)\right) \ d\phi.
\label{eq_stetig W2_second_moment_1dim}
\end{align}
Consider now $\frac{1}{n} \sum_{m=1}^n g^{*m} (\phi - m\varphi)$  for $\phi$ fix.
We use assumption~\ref{eq:assume:g_w2} together with \eqref{eq:assume:g*_w2} and get
\begin{align}
  \frac{1}{n} \sum_{m=1}^n g^{*m} (\phi -m\varphi)
&=
\frac{1}{n} \sum_{m=1}^n \sum_{j\in\Z} c^m_j e^{2 \pi i j (\phi-m\varphi)}
\nonumber\\
&=
\sum_{j\in\Z}  e^{2 \pi i j \phi} \left(\frac{1}{n} \sum_{m=1}^n c^m_j e^{-2 \pi i j m\varphi}\right).
\label{eq_fourier_W2_eingesetz}
\end{align}
We thus have to compute the behavior of 
\begin{align}
\label{eq:case_w2_absolute_boring}
 \frac{1}{n}\sum_{m=1}^n c^m_j e^{-2 \pi i j m\varphi}.
\end{align}

For $j=0$, this expression is always $1$, since $c_0 = \int_0^1 g(\phi) \ d\phi = 1$. 
For $j \neq 0$, we use the assumption $|c_j|<1$ and get 
\begin{align}
\label{eq:w_2_blla_to 0}
\frac{1}{n} \left|\sum_{m=1}^n c^m_j e^{-2 \pi i j m\varphi}\right|
\leq
\frac 1n \left|\frac{1-c_j^{n+1}e^{2i\pi j\varphi(n+1)}}{1-c_j e^{2i\pi j\varphi}}\right|
\leq 
\frac 1n \frac{2}{1-|c_j|}\to 0 \qquad (n\to\infty).\nonumber\\
\end{align}
%
It is thus to expect that the expression in \eqref{eq_fourier_W2_eingesetz} converges for almost all $\phi$ to $1$.
To verify this, we use dominated convergence. We have $\frac{1}{n} \sum_{m=1}^n |c_j^m| \leq |c_j|$  and thus
\begin{align}
  \frac{1}{n} \sum_{m=1}^n g^{*m} (\phi -m\varphi)
  \leq 
  \left|\sum_{j\in\Z}  e^{2 \pi i j \phi} \left(\frac{1}{n} \sum_{m=1}^n c^m_j e^{-2 \pi i j m\varphi}\right)\right|
\leq
\sum_{j\in\Z} |c_j|  < \infty.
\end{align}
Therefore, as $n\to\infty$ and for almost all $\phi$,
\begin{align}
  \frac{1}{n} \sum_{m=1}^n g^{*m} (\phi -m\varphi)
\longrightarrow 1 \ \ (n\to\infty).
\end{align}
Furthermore, $\sum_{j} |c_j|$ is also an upper bound for $\frac{1}{n} \sum_{m=1}^n g^{*m} (\phi - m\varphi)$. So again, we can use in \eqref{eq_stetig W2_second_moment_1dim} dominated convergence and obtain
\begin{align}
    \frac{1}{n} \sum_{m=1}^n \E{|X_{m,k}|^p}
\to
\int_0^1 |h(\phi)|^p \ d\phi.
\end{align}
Similarly one can show 
\begin{align}
    \frac{1}{n} \sum_{m=1}^n \E{X_{m,k}}
\to
\int_0^1 h(\phi) \ d\phi.
\end{align}

Applying these arguments to the imaginary part of $w^{2,n}(f)$ completes the proof for absolutely continuous $z$.\\

{\it Discrete $z$.}

Recall that for discrete $z$ with $z^\rho\equiv1$, there exist always a sequence $(c_j)_{0\leq j\leq \rho-1}$ such that 
\begin{align}
  \Pb{z = e^{2 \pi i k/\rho}} = \frac{1}{\rho} \sum_{j=0}^{\rho-1} c_j e^{2 \pi i jk}.
\end{align}
(See for more details \cite{stein}, chapter 7.)
It follows immediately
\begin{align}
  \Pb{T_{m,1} = e^{2 \pi i k/\rho}} = \frac{1}{\rho} \sum_{j=0}^{\rho-1} c^m_j e^{2 \pi i jk}.
\end{align}
%
%
For any $p\geq 1$, we have
\begin{align}
 \frac{1}{n} \sum_{m=1}^n \E{\bigl|\log\bigl|f(x^m T_{m,1})  \bigr|\bigr|^p}
 &=
  \frac{1}{n} \sum_{m=1}^n \sum_{k=0}^{\rho-1} \bigl| \log\bigl|f(x^m e^{2 \pi i k/\rho})  \bigr|\bigr|^p \Pb{T_{m,1} =e^{2 \pi i jk/\rho}}
\nonumber\\
 &=
 \sum_{j=0}^{\rho-1}  \left(\frac{1}{n} \sum_{m=1}^n \left( \frac{1}{\rho} \sum_{k=0}^{\rho-1} c^m_j e^{2 \pi i jk}  \bigr|h(k/\rho+ m\varphi)|^p \right)\right)
\label{eq_discrete_W2_fourier}
\end{align}
%
Since $c_0=1$, we have that the summands corresponding to $j=0$ give
\begin{align}
\frac{1}{n} \sum_{m=1}^n \left( \frac{1}{\rho} \sum_{k=0}^{\rho-1} \bigr|h(k/\rho+ m\varphi)|^p \right)
=
\frac{1}{n} \sum_{m=1}^n  \bigr|h(k/\rho+ m\varphi)|^p.
\end{align}
We already mentioned in the proof of Theorem~\ref{thm_CLT_W2_1dim_general} that this expression converges to 
$\int_0^1  \bigr|h(\phi)|^p \ d\phi$. We now show that the remaining sum is $o(1)$.
Since by assumption $|c_j|<1$ for $j\neq 0$, we can find a $m_0$ such that $|c_j|^m < \eps$ for $m\geq m_0$ and all $j\neq 0$.  We thus get for all $m \geq m_0$,
\begin{align}
  \left|c^m_j \bigl|\log\bigl|f(x^m e^{2 \pi i k/\rho})\bigr| \bigr|^p \right|
&\leq
 \eps  \left| \log\bigl|f(x^m e^{2 \pi i k/\rho})\bigr|^p  \right|
\end{align}
Since $\epsilon$ was arbitrary and $\rho$ is finite, we see that the sum over all terms with $j\neq0$ is $o(1)$.
The remaining argumentation are the same as in the previous cases and we thus omit them.
%
%
%
\end{proof}

{\bf Proof of Corollary~\ref{cor_Zn_general_1dim}}
\begin{proof}
We use that $Z_{n,z}(x)= W^{2,n}_z(f)$ with $f(x)=1-x^{-1}$. Then Corollary~\ref{cor_Zn_general_1dim} is a direct application of Theorem~\ref{thm_CLT_W2_1dim_general}. 
One only has to compute $V_R(f)$, $V_I(f)$ and $m(f)$. 
A simple computation and Jensen's formula give in this case

\begin{align}
 V_R(f)
=
 V_I(f)
  =
  \frac{\pi^2}{12}
\text{ and }
m(f)
=
0. 
\end{align}
This completes the proof.

\end{proof}

\subsection{Behavior at different points}
\label{sec_CLT_in_ddim}

In this section, we study the joint behavior of the real and the imaginary parts of the characteristic polynomial of $M(\sigma, z)$ and of multiplicative class functions.
Furthermore, we consider the behavior at a finite set of different points $x_1=e^{2\pi i \varphi_1},\dots, x_d = e^{2\pi i \varphi_d}$, $d\in\N$ fixed. \\ 

Before we state the results of this section, it is important to emphasize that we will allow different random variables $z_1,\dots, z_d$ at the different points $x_1,\dots, x_d$. 
Of course, we need to specify the joint behavior at the different points. 
The idea is to define it in such a way that the behavior in disjoint cycles is still independent and the
behaviour in given cycle depends only on the cycle length.
For the multiplicative class function $w^{1,n}(f_j)(x_j)$, we define the following joint behavior. Let $\overline{z}= (z_1,\dots,z_d)$ be a random variable with values in $\mathbb{T}^d$.
Let further $\overline{z}^{(m,k)} = (z_{1}^{(m,k)},\dots,z_{d}^{(m,k)})$ be a sequence of i.i.d. random variables with $\overline{z}^{(m,k)} \stackrel{d}{=} \overline{z}$ (in $m$ and $k$, for $1\leq m\leq n$ and $1\leq k\leq C_m$, where $C_m$ denotes the number of cycles of $m$ in $\sigma$). Then, for functions $f_1,\dots, f_d$ and for any fixed $1\leq j\leq d$,

\begin{align}
w^{1,n} (f_j)(x_j)
=
w_{z_j}^{1,n} (f_j)(x_j)
:=
\sum_{m=1}^{n} \sum_{k=1}^{C_m}
\log\left(f_j\left(z_j^{(m,k)} x_j^m \right)\right).
\end{align}

As requested, we get with this definition that the behavior in disjoint cycles of $\sigma$ is independent. 
But the behavior in a given cycle at different points is determined by $\overline{z}$.\\

For the logarithm of the characteristic polynomial $\log\bigl(Z_{n,z}(x_j)\bigr)$ and for the multiplicative class function $w^{2,n}(f_j)(x_j)$, we do something similar. Intuitively, we construct for each point $x_j$ a matrix $M_{\sigma, z_j}$ as in \eqref{eq_def_wreath_product_matrix}, where we choose for $M_{\sigma, z_1}$ $n$ i.i.d. random variables, which are equal in distribution to $z_1$. At point $x_2$, we choose again $n$ i.i.d random variables, which are equal in distribution to $z_2$ and so on. 
Formally, we define for (the same sequence as above) $\overline{z}^{(m,k)} = (z_{1}^{(m,k)},\dots,z_{d}^{(m,k)})$ another sequence (in $m$ and in $k$) $\overline{T}^{(m,k)}=(T_{1}^{(m,k)},\dots,T_{d}^{(m,k)})$ of independent random variables, so that for any fixed $1\leq j\leq d$ and fixed $1\leq m\leq n$,

\begin{align}
(T_{1}^{(m,k)},\dots,T_{d}^{(m,k)})\stackrel{d}{=}\left(\prod_{\ell=1}^{m}z_1^{(m,\ell)},\dots,\prod_{\ell=1}^{m}z_d^{(m,\ell)}\right),
\end{align}

which implies
$$T_j^{(m,k)} \stackrel{d}{=} \prod_{\ell=1}^{m}z_j^{(m,\ell)}.$$ 

This gives for fixed $j$'s and function $f_j$:

\begin{align}
w^{2,n} (f_j)(x_j)
=
w_{z_j}^{2,n} (f_j)(x_j)
:=
\sum_{m=1}^{n} \sum_{k=1}^{C_m}
\log\left(f_j\left(T_j^{(m,k)} x_j^m \right)\right).
\end{align}

We now state the results of this section:
\begin{theorem}
\label{thm_CLT_W1_general_ddim}
  Let $S_n$ be endowed with the Ewens distribution with parameter $\theta$, $f_1,\dots,f_d$ be non zero real analytic functions, $\overline{z} = (z_1, \dots, z_d)$ a $\mathbb{T}^d$-valued random variable and $x_1=e^{2\pi i \varphi_1} ,\dots, x_d = e^{2\pi i \varphi_d} \in \mathbb{T}$ be such that $1, \varphi_1,\dots,\varphi_d$ are linearly independent over $\Z$.

Suppose that one of the following conditions is satisfied:

\begin{itemize}
  \item $z_1, \dots, z_d$ are uniformly distributed and independent.
  \item For all $1\leq j,\ell\leq d$ and $j\neq \ell$, the joint law of $(z_j,z_\ell)$ is absolutely continuous. The joint density of $z_j$ and $z_\ell$ is bounded and Riemann integrable for all $j\neq \ell$.
  \item For all $1\leq j\leq d$, $z_j$ is trivial, i.e. $z_j\equiv 1$, and all zeros of $f_j$ are roots of unity. Furthermore, $x_1,\dots,x_d$ are pairwise of finite type (see Definition~\ref{def_finite_type_pairwise}).
  \item For all $1\leq j\leq d$, there exists a $\rho_j>0$ with $\left(z_j\right)^{\rho_j} \equiv 1$, all zeros of $f_j$ are roots of unity and $x_1,\dots,x_d$ are pairwise of finite type.
\end{itemize}
We then have, as $n\to\infty$,
\begin{align*}
  \frac{1}{\sqrt{\log n}}
  \left(
\begin{array}{c}
 w^{1,n}(f_1)(x_1)\\
\vdots\\
w^{1,n}(f_d)(x_d)
\end{array}
\right)
-
\theta
\sqrt{\log n}
\left(
\begin{array}{c}
 m(f_1)\\
\vdots\\
m(f_d)
\end{array}
\right)
\stackrel{d}{\longrightarrow}
N
=
  \left(
\begin{array}{c}
 N_1\\
\vdots\\
N_d
\end{array}
\right),
\end{align*}
where $N$ is a $d-$variate complex normal distributed random variable with, for $j\neq\ell$,
\begin{align}
  \cov \bigl( \Re (N_j), \Re(N_\ell) \bigr)
  &=
  \theta\int_{[0,1]^2} \log\bigl|f_j(e^{2\pi i u}) \bigr| \log\bigl| f_\ell(e^{2\pi i v}) \bigr| \ du dv,
  \\
  \cov \bigl( \Re (N_j), \Im(N_\ell) \bigr)
  &=
  \theta\int_{[0,1]^2} \log\bigl|f_j(e^{2\pi i u}) \bigr| \arg\bigl( f_\ell(e^{2\pi i v}) \bigr) \ du dv,
  \\
  \cov \bigl( \Im (N_j), \Im(N_\ell) \bigr)
  &=
  \theta\int_{[0,1]^2} \arg\bigl( f_j(e^{2\pi i u}) \bigr)\arg\bigl( f_\ell(e^{2\pi i v}) \bigr) \ du dv.
\end{align}
and for $j=\ell$,
\begin{align}
\cov \bigl( \Re (N_j), \Re(N_j) \bigr)
&=
 \Var{\Re (N_j)}
  =
  \theta\int_{[0,1]} \log^2\bigl|f_j(e^{2\pi i u}) \bigr|\ du,\\
\cov \bigl( \Re (N_j), \Im(N_j) \bigr)
&=
 \Var{\Im(N_j)}
  =
  \theta\int_{[0,1]} \arg^2\bigl(f_j(e^{2\pi i v}) \bigr) \ dv.
\end{align}

\end{theorem}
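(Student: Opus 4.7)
The plan is to apply the multi-dimensional CLT on the symmetric group, Theorem~\ref{thm_general_CLT_ddim}, to the $\R^{2d}$-valued random variables
\begin{align*}
\overline{X}_{m,k} = \bigl( \Re\log f_j(z_j^{(m,k)}x_j^m),\; \Im\log f_j(z_j^{(m,k)}x_j^m) \bigr)_{j=1}^d,
\end{align*}
so that the associated vector $\overline{A}_{n,2d}$ has components equal to the real and imaginary parts of $w^{1,n}(f_j)(x_j)$. I would take $V_n=\log n$, with the target covariance matrix $\Sigma$ read off directly from the statement, and verify that the centering $\theta\,m(f_j)\sqrt{\log n}$ agrees up to $o(1)$ with $\E{w^{1,n}(f_j)(x_j)}/\sqrt{\log n}$, which was already carried out in the proof of Theorem~\ref{thm_CLT_W1_1dim_general}.

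All diagonal entries of $\Sigma$ and the Lyapunov-type $p$-th moment condition \eqref{eq:inf_var_cond_ddim} were already established in Theorem~\ref{thm_CLT_W1_1dim_general} for each of the four hypotheses on $\overline{z}$, and since each component of $\overline{X}_{m,k}$ is one of the scalar variables treated there, these conditions transfer componentwise. The only genuinely new content, by \eqref{eq:partial_sum}, is to show that for every pair $j\neq\ell$ and every choice of real/imaginary projections $h_j\in\{\log|f_j|,\arg f_j\}$ and $h_\ell\in\{\log|f_\ell|,\arg f_\ell\}$,
\begin{align*}
\frac{1}{n}\sum_{m=1}^n \E{X_{m,1,j}X_{m,1,\ell}} \longrightarrow \int_{[0,1]^2} h_j(u)\,h_\ell(v)\,du\,dv.
\end{align*}

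For the uniform independent case, $(z_j^{(m,1)}x_j^m,z_\ell^{(m,1)}x_\ell^m)$ is uniform on $\mathbb{T}^2$ for every $m$, so the expectation is constant in $m$ and already equals the stated double integral. For the absolutely continuous case, translating the arguments of $f_j$ and $f_\ell$ reduces the task to showing that, for a.e.\ $(u,v)$,
\begin{align*}
\frac{1}{n}\sum_{m=1}^n g_{j,\ell}(u-m\varphi_j,\,v-m\varphi_\ell) \longrightarrow 1,
\end{align*}
where $g_{j,\ell}$ is the joint density. This follows from Lemma~\ref{lem_all_irrational_uniform_distributed}, which under linear independence of $1,\varphi_1,\dots,\varphi_d$ over $\Z$ makes the Kronecker sequence $(\fracpart{m\varphi_j},\fracpart{m\varphi_\ell})$ uniformly distributed in $[0,1]^2$, combined with Theorem~\ref{thm_equidist_integral_convergence} applied to the bounded Riemann-integrable $g_{j,\ell}$; dominated convergence then exchanges sum and outer integral against the envelope $\sup g_{j,\ell}\cdot|h_j||h_\ell|$.

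The main obstacle is the trivial case $z_j\equiv 1$, where the cross moment becomes the purely arithmetic average $\frac{1}{n}\sum_{m=1}^n \log|f_j(x_j^m)|\log|f_\ell(x_\ell^m)|$, an evaluation of the two-dimensional Kronecker sequence on a product-of-logarithms with singularities along finitely many coordinate lines, so Theorem~\ref{thm_equidist_integral_convergence} no longer applies. Here I would invoke the extended Koksma--Hlawka inequality, Theorem~\ref{thm_koksma_inequality_2}, in dimension $d=2$ on the sliced domain $Q=Q(\delta)$ avoiding $\delta$-neighbourhoods of the singular lines; the pairwise finite-type hypothesis together with Theorem~\ref{thm_estimate_Dn_finite_type} supplies the face-discrepancy bound $D_n^*(F,\vB)=O(n^{-\alpha})$, while each face integral $\int_F h\,dF$ grows only polynomially in $\log(1/\delta)$ because $\log|f_j|$ is locally $L^p$-integrable. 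Balancing the two errors by choosing $\delta=\delta(n)\to 0$ at an appropriate rate annihilates the right-hand side of \eqref{eq_koksma_inequality_2}; this is the direct two-dimensional extension of the argument in \cite[pp.~14--15]{DZ-CLT}. The general discrete case is then reduced to this by expanding $\Pb{z_j=e^{2\pi i k/\rho_j}}=\rho_j^{-1}\sum_{s}c_s^{(j)}e^{2\pi isk}$ as in the proof of Theorem~\ref{thm_CLT_W2_1dim_general}: the $s=0$ contribution recovers the trivial case, while the other contributions decay geometrically because $|c_s^{(j)}|<1$. Once these cross-moment limits are in place, Theorem~\ref{thm_general_CLT_ddim} yields the joint CLT with the stated covariance structure via the Cram\'er--Wold device built into its proof.
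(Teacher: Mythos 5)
Your overall route coincides with the paper's: reduce to Theorem~\ref{thm_general_CLT_ddim}, observe that the diagonal entries of $\Sigma$, the Lyapunov condition \eqref{eq:inf_var_cond_ddim} and the centering are inherited from the one-point proof of Theorem~\ref{thm_CLT_W1_1dim_general}, and establish the mixed limits $\frac1n\sum_{m=1}^n\E{X_{m,1,j}X_{m,1,\ell}}\to\sigma_{j,\ell}$. Your uniform and absolutely continuous cases are exactly the paper's (two-dimensional Kronecker equidistribution plus dominated convergence), and your trivial case is precisely the content of Lemma~\ref{lem_conv_log_finite_type_d=2}, proved via Theorem~\ref{thm_koksma_inequality_2} with $d=2$. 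One small caveat there: $\delta$ is not a free parameter to ``balance''; Theorem~\ref{thm_koksma_inequality_2} requires the first $n$ Kronecker points to lie in $Q(\delta)$, and it is the pairwise finite-type hypothesis that permits the choice $\delta=Kn^{-\gamma}$, after which the terms $\delta\cdot\mathrm{polylog}(n)$ and $n^{-\alpha}\log^2 n$ both vanish.

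The genuine gap is in your fourth bullet (discrete $z_j$ with $z_j^{\rho_j}\equiv 1$). You reduce it to the trivial case by Fourier-expanding $\Pb{z_j=e^{2\pi i k/\rho_j}}=\rho_j^{-1}\sum_s c^{(j)}_s e^{2\pi i sk}$ and claiming the $s\neq 0$ contributions ``decay geometrically because $|c^{(j)}_s|<1$''. That is the mechanism for $w^{2,n}$ (Theorem~\ref{thm_CLT_W2_general_ddim}), where the law of $T^{(m,k)}_j$ involves $c_s^{m}$ and the bound $|c_s|<1$ is explicitly assumed. In the present $w^{1,n}$ theorem neither ingredient is available: the hypotheses impose no restriction on the Fourier coefficients of the law of $z_j$ (for instance $z_j$ deterministic equal to a primitive $\rho_j$-th root of unity gives $|c^{(j)}_s|=1$ for every $s$), and since the relevant variable is $z^{(m,k)}_j$ itself rather than an $m$-fold product, the coefficients are never raised to the $m$-th power, so there is no decay in $m$ at all; the step as written fails. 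The correct (and simpler) argument, which is what the paper does by referring back to the discrete case of Theorem~\ref{thm_CLT_W1_1dim_general}, is to decompose over the finitely many atoms: write $\E{X_{m,1,j}X_{m,1,\ell}}=\sum_{k_1,k_2}\Pb{z_j=e^{2\pi i k_1/\rho_j},\,z_\ell=e^{2\pi i k_2/\rho_\ell}}\,h_j\bigl(k_1/\rho_j+m\varphi_j\bigr)h_\ell\bigl(k_2/\rho_\ell+m\varphi_\ell\bigr)$, note that for each fixed atom the shifted functions $\phi\mapsto f_j\bigl(e^{2\pi i k_1/\rho_j}e^{2\pi i\phi}\bigr)$ still have only roots of unity as zeros, apply the trivial-case convergence of Lemma~\ref{lem_conv_log_finite_type_d=2} to each atom, and then sum against the probabilities, which total one.
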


Note that, in Theorem~\ref{thm_CLT_W1_general_ddim}, the first condition implies the second and the third condition implies the fourth.
For the multiplicative function $w^{2,n}$, we have the following result:
\begin{theorem}
\label{thm_CLT_W2_general_ddim}
  Let $S_n$ be endowed with the Ewens distribution with parameter $\theta$, $f_1,\dots, f_d$ be non zero real analytic functions, $\overline{z} = (z_1, \dots, z_d)$ a $\mathbb{T}^d$-valued random variable and $x_1=e^{2\pi i \varphi_1} ,\dots, x_d = e^{2\pi i \varphi_d} \in \mathbb{T}$ be such that $1, \varphi_1,\dots,\varphi_d$ are linearly independent over $\Z$.

Suppose that one of the following conditions is satisfied:

\begin{itemize}
  \item $z_1, \dots, z_d$ are uniformly distributed and independent.
  \item For all $1\leq j,\ell\leq d$ and $j\neq \ell$, the joint law of $(z_j,z_\ell)$ is absolutely continuous. For each $j \neq \ell$, the joint density $g_{j,\ell}$ of $z_j$ and $z_\ell$ satisfies 
  \begin{align}
g_{j,\ell}(\phi_j,\phi_\ell)
=
\sum_{a,b\in\Z} c_{a,b} e^{2 \pi i (a \phi_j + b \phi_\ell)}
\ \text{ and } \ \sum_{a,b\in\Z} |c_{a,b}| < \infty.
\end{align}
  \item For all $1\leq j\leq d$, $z_j$ is trivial, i.e. $z_j\equiv 1$, and all zeros of $f_j$ are roots of unity. Furthermore, $x_1,\dots,x_d$ are pairwise of finite type (see Definition~\ref{def_finite_type_pairwise}),
  \item For all $1\leq j\leq d$, $z_j$ is discrete, there exists a $\rho_j>0$ with $\left(z_j\right)^{\rho_j} \equiv 1$, all zeros of $f_j$ are roots of unity. Furthermore, assume that $x_1,\dots, x_d$ are pairwise of finite type (see Definition~\ref{def_finite_type_pairwise}) and that for $j\neq \ell$
\begin{align}
  \Pb{z_j = e^{2 \pi i k_1/\rho_j},z_\ell = e^{2 \pi i k_2/\rho_\ell} }
=
\frac{1}{\rho_j\rho_\ell} \sum_{a=0}^{\rho_j-1} \sum_{b=0}^{\rho_\ell-1}c_{a,b} e^{2 \pi i (ak_1+ bk_2)}\\
\ \text{ with }
\sum_{a=0}^{\rho_j-1} |c_{a,b}|<1, \text{ for } b\neq 0\ \text{ and }  \sum_{b=0}^{\rho_\ell-1} |c_{a,b}|<1, \text{ for } a\neq 0.
\end{align}
\end{itemize}
We then have, as $n\to\infty$,
\begin{align*}
  \frac{1}{\sqrt{\log n}}
  \left(
\begin{array}{c}
 w^{2,n}(f_1)(x_1)\\
\vdots\\
w^{2,n}(f_d)(x_d)
\end{array}
\right)
-
\theta
\sqrt{\log n}
\left(
\begin{array}{c}
 m(f_1)\\
\vdots\\
m(f_d)
\end{array}
\right)
\stackrel{d}{\longrightarrow}
N
=
  \left(
\begin{array}{c}
 N_1\\
\vdots\\
N_d
\end{array}
\right)
\end{align*}
with $m(f_j)$ and $N$ as in Theorem~\ref{thm_CLT_W1_general_ddim}.
\end{theorem}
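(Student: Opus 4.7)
The plan is to derive Theorem~\ref{thm_CLT_W2_general_ddim} directly from the multi-dimensional CLT on the symmetric group, Theorem~\ref{thm_general_CLT_ddim}. I would introduce the $2d$-dimensional real random vector $\overline{X}_{m,k}$ whose first $d$ coordinates are $\Re\log f_j(x_j^m T_j^{(m,k)})$ and whose last $d$ coordinates are $\Im\log f_j(x_j^m T_j^{(m,k)})$, so that $\overline{A}_{n,2d}=\sum_{m,k}\overline{X}_{m,k}$ encodes the joint vector of real and imaginary parts of $w^{2,n}(f_j)(x_j)$. By the construction preceding the theorem, the $\overline{X}_{m,k}$ are independent across $(m,k)$ and independent of $\sigma$, with $\overline{X}_{m,k}\stackrel{d}{=}\overline{X}_{m,1}$, so Theorem~\ref{thm_general_CLT_ddim} applies once its two hypotheses and the centering are verified. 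I would take $V_n=\log n$.

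The diagonal entries ($j=\ell$ in the $2d$ indexing) of \eqref{eq:V_N_ddim} and the Lyapunov condition \eqref{eq:inf_var_cond_ddim} reduce to the computations carried out in the proof of Theorem~\ref{thm_CLT_W2_1dim_general} for each of the four cases considered here. The centering $\E{\overline{A}_{n,2d}}\sim\theta(\log n)\,(m(f_j))_j$ follows via the Feller coupling together with Lemmas~\ref{lem_bound_Feller} and~\ref{lem_bound_Feller_2}, exactly as in the one-dimensional case. The genuinely new work is the off-diagonal entries of \eqref{eq:V_N_ddim}, which simultaneously encompass cross-terms between different points $x_j\neq x_\ell$ and between the real and imaginary parts at the same point. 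By the partial summation identity \eqref{eq:partial_sum}, it suffices to prove
\begin{align*}
 \frac{1}{n}\sum_{m=1}^n \E{X_{m,1,j}\,X_{m,1,\ell}}\longrightarrow \sigma_{j,\ell}
\end{align*}
with $\sigma_{j,\ell}$ matching the covariance read off from Theorem~\ref{thm_CLT_W1_general_ddim}. The key structural observation is that, for fixed $m$, the pair $(T_j^{(m,1)},T_\ell^{(m,1)})$ corresponds on the additive torus $[0,1]^2$ to the $m$-fold convolution of the law of $(z_j,z_\ell)$.

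In the uniform independent case this convolution remains uniform on $[0,1]^2$, so the expectation factors into a product of one-dimensional means matching the stated formula. In the absolutely continuous case I would Fourier-expand $g_{j,\ell}^{*m}(u,v)=\sum_{a,b}c_{a,b}^m e^{2\pi i(au+bv)}$: the $(0,0)$-mode yields the main term by translation invariance of Lebesgue measure, while for $(a,b)\neq(0,0)$ the oscillation $e^{-2\pi i m(a\varphi_j+b\varphi_\ell)}$ combined with the absolute summability $\sum|c_{a,b}|<\infty$ produces a vanishing Cesaro mean by dominated convergence, in direct analogy with the one-dimensional argument in the proof of Theorem~\ref{thm_CLT_W2_1dim_general}.

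The main obstacle lies in the trivial and (after a Fourier decomposition of the discrete density, as in the one-dimensional proof) discrete cases, where the computation reduces to controlling Riemann-type sums
\begin{align*}
 \frac{1}{n}\sum_{m=1}^n \log f_j(x_j^m\zeta_j)\,\log f_\ell(x_\ell^m\zeta_\ell)
\end{align*}
along the two-dimensional Kronecker sequence $(\{m\varphi_j\},\{m\varphi_\ell\})$. Since the integrand has logarithmic singularities at the zeros of $f_j$ and $f_\ell$, the standard Koksma-Hlawka inequality does not apply. I would instead use the extended inequality Theorem~\ref{thm_koksma_inequality_2} on a shrunken domain $Q_{\delta_n}$, combined with the two-dimensional discrepancy estimate of Theorem~\ref{thm_estimate_Dn_finite_type} — this is precisely where the pairwise finite type hypothesis enters, guaranteeing polynomial decay $D_n^* = O(n^{-\alpha})$. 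A judicious choice of $\delta_n\to 0$, balancing the boundary-integral error (tame thanks to local integrability of powers of the logarithm) against the discrepancy bound, then yields convergence to the claimed $\sigma_{j,\ell}$ and completes the verification.
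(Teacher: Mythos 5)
Your proposal is correct and follows essentially the same route as the paper: reduce to the multi-dimensional CLT on $S_n$ (Theorem~\ref{thm_general_CLT_ddim}), reuse the one-dimensional variance/Lyapunov/centering estimates, identify the joint law of $(T_j^{(m,1)},T_\ell^{(m,1)})$ as the $m$-fold torus-convolution of the law of $(z_j,z_\ell)$, and handle the discrete/trivial off-diagonal covariances via the extended Koksma--Hlawka inequality (Theorem~\ref{thm_koksma_inequality_2}) on a shrunken domain with $\delta_n\sim n^{-\gamma}$ balanced against the finite-type discrepancy bound of Theorem~\ref{thm_estimate_Dn_finite_type} --- which is precisely the content of the paper's Lemma~\ref{lem_conv_log_finite_type_d=2}.
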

As before, we get as simple corollary, which covers Proposition~\ref{prop_CLT_Zn_general_ddim}:
\begin{corollary}
\label{thm_CLT_Zn_general_ddim}
  Let $S_n$ be endowed with the Ewens distribution with parameter $\theta$, $\overline{z} = (z_1, \dots, z_d)$ be a $\mathbb{T}^d$-valued random variable and $x_1=e^{2\pi i \varphi_1} ,\dots, x_d = e^{2\pi i \varphi_d} \in \mathbb{T}$ be such that $1, \varphi_1,\dots,\varphi_d$ are linearly independent over $\Z$.

Suppose that one of the conditions in Theorem~\ref{thm_CLT_W2_general_ddim} is satisfied:
We then have, as $n\to\infty$,
\begin{align*}
\frac{1}{\sqrt{\frac{\pi^2}{12}\theta\log n}}
  \left(
\begin{array}{c}
 \log(Z_{n, z_1}(x_1)\bigr)\\
\vdots\\
\log \bigl(Z_{n, z_d}(x_d) \bigr)
\end{array}
\right)
\stackrel{d}{\longrightarrow}
  \left(
\begin{array}{c}
N_1\\
\vdots\\
N_d
\end{array}
\right)
\end{align*}
with $\Re(N_1),\dots,\Re(N_d), \Im(N_1),\dots,\Im(N_d)$ independent standard normal distributed random variables.
\end{corollary}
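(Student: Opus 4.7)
My plan is to derive Corollary~\ref{thm_CLT_Zn_general_ddim} as a direct consequence of Theorem~\ref{thm_CLT_W2_general_ddim}, applied with the specific function $f(x)=1-x^{-1}$. The starting observation is simply that, by the discussion following Definition~\ref{def_W2}, one has
\begin{equation*}
\log Z_{n,z_j}(x_j) = w_{z_j}^{2,n}(f)(x_j)
\qquad\text{with } f(x)=1-x^{-1},
\end{equation*}
so the vector $(\log Z_{n,z_1}(x_1),\dots,\log Z_{n,z_d}(x_d))$ is the vector appearing in Theorem~\ref{thm_CLT_W2_general_ddim} with $f_1=\cdots=f_d=f$. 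The hypotheses of the corollary were written precisely to match one of the four cases of that theorem, so no extra work is needed to transfer them; the only point worth noting is that the zero of $f$ is at $x=1$, a root of unity, so the root-of-unity assumption in the discrete cases of Theorem~\ref{thm_CLT_W2_general_ddim} is automatically satisfied.

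It remains to identify the mean vector and the covariance matrix. For $f(x)=1-x^{-1}$, I would first show the centering vanishes: by Jensen's formula,
\begin{equation*}
\int_0^1 \log\bigl|1-e^{-2\pi i\phi}\bigr|\,d\phi = 0,
\end{equation*}
and writing $1-e^{-2\pi i\phi}=-2i\sin(\pi\phi)e^{-i\pi\phi}$ one reads off $\arg(1-e^{-2\pi i\phi})=\pi/2-\pi\phi$ on $(0,1)$, whose integral over $[0,1]$ is also $0$. Hence $m(f)=m_R(f)+im_I(f)=0$, so the centering $\theta\sqrt{\log n}\,m(f)$ in Theorem~\ref{thm_CLT_W2_general_ddim} disappears and the stated limit needs no re-centering.

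For the diagonal variance entries I would then compute
\begin{equation*}
\int_0^1 \log^2\bigl|1-e^{-2\pi i\phi}\bigr|\,d\phi
= \int_0^1 \bigl(\pi/2-\pi\phi\bigr)^2 d\phi = \frac{\pi^2}{12},
\end{equation*}
the first integral being the classical Euler evaluation (expand $\log(2\sin\pi\phi)$ as a Fourier series and use Parseval). Thus $V_R(f)=V_I(f)=\pi^2/12$, which is the reason for the normalization $\sqrt{(\pi^2/12)\theta\log n}$ in the statement. The off-diagonal entries of $\Sigma$ factor as products of one-dimensional integrals, each of which contains a factor $\int_0^1\log|1-e^{-2\pi i u}|\,du=0$ or $\int_0^1\arg(1-e^{-2\pi i u})\,du=0$, and therefore vanish. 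Finally, for the cross-covariance $\cov(\Re N_j,\Im N_j)$ inside a single coordinate I would invoke the symmetry $\phi\mapsto 1-\phi$: $\log|1-e^{-2\pi i\phi}|$ is invariant while $\pi/2-\pi\phi$ flips sign, so this integral also vanishes.

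Putting the pieces together, the limiting covariance matrix is $\theta\cdot\tfrac{\pi^2}{12}$ times the $(2d)\times(2d)$ identity on $(\Re N_1,\dots,\Re N_d,\Im N_1,\dots,\Im N_d)$, which after dividing by $\sqrt{(\pi^2/12)\theta\log n}$ gives exactly the independent standard normal components claimed. I do not expect a serious obstacle: the probabilistic content is entirely contained in Theorem~\ref{thm_CLT_W2_general_ddim}, and the remaining work reduces to the integral evaluations above, each of which is either Jensen's formula, a direct trigonometric identity, or a symmetry argument.
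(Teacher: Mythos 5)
Your proposal is correct and follows essentially the same route as the paper: the corollary is obtained by applying Theorem~\ref{thm_CLT_W2_general_ddim} with $f_1=\cdots=f_d=f$, $f(x)=1-x^{-1}$, and then evaluating $m(f)=0$ and $V_R(f)=V_I(f)=\tfrac{\pi^2}{12}$ via Jensen's formula, with the off-diagonal covariances vanishing because the product integrals contain a zero one-dimensional factor. Your additional symmetry argument for $\int_0^1 \log\bigl|1-e^{-2\pi i\phi}\bigr|\arg\bigl(1-e^{-2\pi i\phi}\bigr)\,d\phi=0$ is a welcome explicit verification of the within-point independence of real and imaginary parts, which the paper leaves to a ``simple computation.''
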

{\bf Proof of Theorem~\ref{thm_CLT_W1_general_ddim} and~\ref{thm_CLT_W2_general_ddim}} 
\begin{proof}
We consider $w^{1,n}(f)$ and $w^{2,n}(f)$ as $\R^2$-valued random variables and argue with Theorem~\ref{thm_general_CLT_ddim}.
Using the verified conditions \eqref{eq:V_N} and \eqref{eq:inf_var_cond} from Theorem~\ref{thm_trace_infinite_case}, all conditions of Theorem~\ref{thm_general_CLT_ddim} are satisfied if the following equation is true:
%
\begin{align}
  \lim_{n\to\infty}\frac{1}{n}\sum_{m=1}^n \E{X_{m,1,j}X_{m,1,\ell}} = \sigma_{j,\ell}.
  \label{eq_cond_ii_general_2}
\end{align}
The computations for uniformly distributed and for absolute continuous $z_1,\dots, z_d$ are for both, $w^{1,n}$ and $w^{2,n}$, the same as 
in the proof of Theorem~\ref{thm_CLT_W1_1dim_general} and the proof of Theorem~\ref{thm_CLT_W2_1dim_general} and we thus omit them.
The trivial and the discrete case (the third and the forth condition in in Theorem~\ref{thm_CLT_W1_general_ddim}) is slightly more difficult 
and we thus have a closer look at them.
The behavior in one point, where $z\equiv 1$ has been treated by \cite{HKOS}. For the behavior at different points, we need the following lemma:

\begin{lemma}
\label{lem_conv_log_finite_type_d=2}
Let $f_1, f_2:\mathbb{T} \to \C$ be real analytic with only roots of unity as zeros and let $x_1= e^{2 \pi i \varphi_1}$ and $x_2= e^{2 \pi i \varphi_2}$ be such that $(x_1, x_2)\in\mathbb{T}^2$ be of finite type (see Definition~\ref{def_finite_type}). We then have $\log\bigl| f_j(x_j^n)\bigr| = O\bigl(\log n\bigr)$ for $j\in\{1,2\}$. Moreover, as $n\to\infty$
\begin{align}
\frac{1}{n} \sum_{m=1}^n \log\left| f_1\left( x_1^m\right)\right|    \log\left|f_2\left( x_2^m \right) \right|
 &\longrightarrow
\int_{[0,1]^2} \log\bigl|f_1(e^{2\pi i u}) \bigr| \log\bigl| f_2(e^{2\pi i v}) \bigr| \ du dv,
\label{eq_convergence_log_finite_type_d=2}
\end{align}

\begin{align}
\frac{1}{n} \sum_{m=1}^n \arg\left( f_1\left( x_1^m\right)\right)    \log\left|f_2\left( x_2^m \right) \right|
 &\longrightarrow
\int_{[0,1]^2} \arg\bigl(f_1(e^{2\pi i u}) \bigr) \log\bigl| f_2(e^{2\pi i v}) \bigr| \ du dv
\label{eq_convergence_arg_finite_type_d=2}
\end{align}
and 
\begin{align}
\frac{1}{n} \sum_{m=1}^n \arg\left( f_1\left( x_1^m\right)\right)    \arg\left(f_2\left( x_2^m \right) \right)
 &\longrightarrow
\int_{[0,1]^2} \arg\bigl(f_1(e^{2\pi i u}) \bigr) \arg\bigl( f_2(e^{2\pi i v}) \bigr) \ du dv.
\label{eq_convergence_argarg_finite_type_d=2}
\end{align}
\end{lemma}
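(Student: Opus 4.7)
The approach is to combine the discrepancy estimate for Kronecker sequences of finite type (Theorem~\ref{thm_estimate_Dn_finite_type}) with the extended Koksma--Hlawka inequality (Theorem~\ref{thm_koksma_inequality_2}), applied to the two-dimensional Kronecker sequence $\vB$ of $(\varphi_1,\varphi_2)$. Note that since $(x_1,x_2)$ is pairwise of finite type in the sense of Definition~\ref{def_finite_type_pairwise}, the pair $(\varphi_1,\varphi_2)\in [0,1]^2$ is of finite type, and moreover each individual $\varphi_j$ is of finite type (take one coordinate of $\overline{q}$ equal to zero in Definition~\ref{def_finite_type}). Hence Theorem~\ref{thm_estimate_Dn_finite_type} yields $D_n^*(\vB)=O(n^{-\alpha})$ for some $\alpha>0$, and the same polynomial decay holds for the one-dimensional projections by the $d=1$ version.

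\textbf{Pointwise bound.} Each zero of $f_j$ on $\mathbb{T}$ is a root of unity $\zeta=e^{2\pi i p/q}$ with local behavior $f_j(z)\sim c(z-\zeta)^k$. The $1$-dimensional finite-type property for $\varphi_j$ gives $\|qn\varphi_j\|\geq K(qn)^{-\gamma}$, and since $\|n\varphi_j-p/q\|\geq q^{-1}\|qn\varphi_j\|$ we obtain $|x_j^n-\zeta|\gtrsim n^{-\gamma}$ uniformly in the finitely many zeros $\zeta$. Consequently $\log|f_j(x_j^n)|\geq-C\log n$; the matching upper bound is immediate since $|f_j|$ is bounded on $\mathbb{T}$.

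\textbf{Main convergence, log--log case.} Set $h(\phi_1,\phi_2):=\log|f_1(e^{2\pi i\phi_1})|\log|f_2(e^{2\pi i\phi_2})|$; this is in $L^1([0,1]^2)$ but fails to be of bounded variation because of logarithmic blow-up along the vertical and horizontal lines through the zeros of $f_1,f_2$. Fix $\delta>0$ and form the region $Q=Q(\delta)$ as in the discussion preceding Theorem~\ref{thm_koksma_inequality_2} by excising $\delta$-neighbourhoods of these lines. On each cell $Q_{\overline{q}}$ the function $h$ is real analytic, and using the product structure of $h$ together with integrability of $\log|f_j|$ one checks on every positive face $F$ of $Q_{\overline{q}}$ that $V(h|F)=O((\log\delta)^2)$ (each factor being at most $O(|\log\delta|)$ near the excised boundary and of variation $O(|\log\delta|)$ after excluding its own zeros) and that $\int_F h\,dF=O(|\log\delta|)$. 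Feeding these into \eqref{eq_koksma_inequality_2} together with the face discrepancy bound $D_n^*(F,\vB)=O(n^{-\alpha})$ produces an error of order $n^{-\alpha}(\log\delta)^2+\delta|\log\delta|$ between the Ces\`aro sum and $\int_{Q(\delta)} h$. Taking $\delta:=n^{-\alpha/2}$ makes this $o(1)$, while $\int_{Q(\delta)} h\to\int_{[0,1]^2} h$ as $\delta\to 0$ by dominated convergence, establishing \eqref{eq_convergence_log_finite_type_d=2}.

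\textbf{Arg cases and main difficulty.} The identities \eqref{eq_convergence_arg_finite_type_d=2} and \eqref{eq_convergence_argarg_finite_type_d=2} are proved by the same scheme, replacing one or both occurrences of $\log|f_j|$ by the piecewise real analytic and uniformly bounded function $\arg(f_j(e^{2\pi i\phi}))$. This removes one or both $|\log\delta|$ factors in the variation and boundary-integral estimates, so the balancing is strictly easier. The principal difficulty is the log--log case above: simultaneous control of two logarithmic singularities through the $Q(\delta)$-truncation and the careful bookkeeping of the resulting $(\log\delta)^2$ factors against the polynomial decay of $D_n^*$. This is the direct two-dimensional analogue of the one-dimensional computation carried out in \cite[Section~4]{DZ-CLT}, and its argument transfers with $d=2$ after the modifications above.
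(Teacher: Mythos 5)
Your overall strategy is the same as the paper's (two-dimensional Kronecker sequence, discrepancy bound from Theorem~\ref{thm_estimate_Dn_finite_type}, truncated cube $Q(\delta)$ and the extended Koksma--Hlawka inequality of Theorem~\ref{thm_koksma_inequality_2}, with the arg cases being easier), and your pointwise bound $\log|f_j(x_j^n)|=O(\log n)$ via the one-dimensional Diophantine estimate is a correct addition that the paper leaves implicit. However, there is a genuine gap at the point where you invoke Theorem~\ref{thm_koksma_inequality_2}: that theorem requires the sequence $(\varphi^{(m)})_{m=1}^n$ to lie \emph{inside} $Q(\delta)$, and you never verify this. With your choice $\delta=n^{-\alpha/2}$ it is in general false: since $D_n^*(\vB_j)=O(n^{-\alpha})$, an interval of radius $n^{-\alpha/2}$ around a zero of $f_j$ (a root of unity) contains on the order of $n^{1-\alpha/2}$ of the points $\fracpart{m\varphi_j}$, $1\leq m\leq n$, so a large number of summands correspond to points excised from $Q(\delta)$ and the inequality cannot be applied to the full Ces\`aro sum. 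This is exactly where the finite-type hypothesis has to be used a second time (beyond the discrepancy estimate): it guarantees $\norm{m\varphi_j - p/q}\gtrsim n^{-\gamma}$ for all $m\leq n$ and all zeros $e^{2\pi i p/q}$, which forces the choice $\delta = K n^{-\gamma}$ (as in the paper), the \emph{largest} admissible truncation, not a $\delta$ optimized only for the error terms.

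The slip is repairable with no new ideas: take $\delta=Kn^{-\gamma}$, so that $|\log\delta|=O(\log n)$, and your own bookkeeping still gives an error of order $n^{-\alpha}\log^2 n + n^{-\gamma}\log n \to 0$; alternatively one could keep a larger $\delta$ but then must separately bound the $O(n^{1-\alpha/2})$ excluded terms using the pointwise bound $O(\log^2 n)$, which you did not do. As written, though, the application of Theorem~\ref{thm_koksma_inequality_2} is invalid, so the log--log case (and likewise the arg cases, which use the same truncation) is not yet proved.
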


By using Lemma~\ref{lem_conv_log_finite_type_d=2}, the proof for $z_1, \dots, z_d$ being discrete is the same as the discrete case in the proof of Theorem~\ref{thm_CLT_W1_1dim_general} and the proof of Theorem~\ref{thm_CLT_W2_1dim_general}. Thus, in order to conclude the proofs for Theorem~\ref{thm_CLT_W1_general_ddim} and~\ref{thm_CLT_W2_general_ddim}, we will proceed by giving the proof of Lemma~\ref{lem_conv_log_finite_type_d=2}:

\begin{proof}
We start by considering \eqref{eq_convergence_log_finite_type_d=2}. 
Since $x_1$ and $x_2$ are not roots of unity, we expect for $n \to \infty$
\begin{align}
 \frac{1}{n} \sum_{m=1}^n \log\left| f_1\left( x_1^m\right)\right|    \log\left|f_2\left( x_2^m \right) \right|
 \longrightarrow
 \int_{[0,1]^2} \log\bigl|f_1(e^{2\pi i u}) \bigr| \log\bigl| f_2(e^{2\pi i v}) \bigr| \ du dv.
 \label{eq_conv_covar_trivial}
 \end{align}
Unfortunately this is not automatically true since $\log(f_j)$ is not of bounded variation if $f_j$ has zeros and we thus cannot apply Theorem~\ref{thm_equidist_integral_convergence}.
We show here that \eqref{eq_conv_covar_trivial} is true by using Theorem~\ref{thm_koksma_inequality_2} and the assumption that $(x_1, x_2)$ is of finite type. 

We use the notations:
\begin{align}
h_1(\phi)&:= \log|f_1(e^{2\pi i \phi })|,\
\varphi_1^{(m)} := \fracpart{ m \varphi_1},  \
\vB_1 := ( \varphi_1^{(m)})_{m=1}^\infty,
\nonumber\\
h_2(\phi)&:= \log|f_2(e^{2\pi i \phi })|,\
\varphi_2^{(m)} := \fracpart{ m \varphi_2},  \
\vB_2 := ( \varphi_2^{(m)})_{m=1}^\infty,
\nonumber\\
\overline{\varphi}&:= (\varphi_1,\varphi_2), \
\varphi^{(m)} := (\varphi_1^{(m)},\varphi_2^{(m)}) \
\vB:= \bigl( \varphi^{(m)}\bigr)_{m=1}^\infty.
\end{align}
We thus can reformulate the LHS of \eqref{eq_convergence_log_finite_type_d=2} as
\begin{align}
 \frac{1}{n} \sum_{m=1}^n h_1(\varphi_1^{(m)}) h_2(\varphi_2^{(m)}).
\end{align}

If $f_1$ and $f_2$ are zero free, then $h_1$ and $h_2$ are Riemann integrable and of bounded variation. Furthermore, $1, \varphi_1, \varphi_2$ are by assumption linearly independent over $\Z$, and thus $\vB$ is a uniformly distributed sequence by Lemma~\ref{lem_all_irrational_uniform_distributed}. Equation \eqref{eq_conv_covar_trivial} now follows immediately with Theorem~\ref{thm_equidist_integral_convergence}.\\


If $f_1$ and $f_2$ are not zero free, we have to be more careful.
We use in this case Theorem~\ref{thm_koksma_inequality_2} for $d=2$. We assume for simplicity that $0$ and $1$ are to the only singularities of $h_1$ and $h_2$. The more general case with roots of unity as zeros is completely similar.

We first have to choose a suitable $\delta = \delta(n)$ such that $\varphi^{(m)} \in [\delta, 1- \delta]^{2}$ for $1 \leq m \leq n$.
Since by assumption $\overline{\varphi}$ is of finite type, there exists $K>0,\gamma>1$ such that
 \begin{align}
    \norm{\overline{q}\cdot \overline{\varphi}} \geq \frac{K}{(\norminf{\overline{q}})^\gamma}
    \ \text{ for all }\overline{q}\in \Z^2\setminus\set{0}
  \end{align}
with $\norm{a}:= \inf_{m\in\Z} |a-m|$. We thus can chose $\delta = \frac{K}{n^\gamma}$.

Next, we have to estimate the discrepancies of the sequences $\vB_1, \vB_2$ and $\vB$.
Since $\overline{\varphi},\varphi_1,\varphi_2$ are of finite type, we can use Theorem~\ref{thm_estimate_Dn_finite_type} and get
\begin{align}
  D_n^{*} (\vB_1) = O(n^{-\alpha_1}), \
  D_n^{*} (\vB_2) = O(n^{-\alpha_2}) \ \text{ and } \
  D_n^{*} (\vB) = O(n^{-\alpha}) 
\end{align}
for some $\alpha_1,\alpha_2,\alpha>0$.

We can show now with Theorem~\ref{thm_koksma_inequality_2} that the error made by the approximation in \eqref{eq_conv_covar_trivial} goes to $0$ by showing that all summands on the RHS of \eqref{eq_koksma_inequality_2} go to $0$.
This computation is straightforward and very similar for each summand.
We restrict ourselves to illustrate the computations only on the summands corresponding to the face $F$ of $[\delta,1-\delta]^2$ with $\phi_1 = 1-\delta$. We get with $h(\phi_1,\phi_2):= h_1(\phi_1) h_2(\phi_2)$,
\begin{align}
  \delta |h_1(1-\delta)| \int_{\delta}^{1-\delta} |h_2(u)| \ du + D_n^*(\vB_2) |h_1(1-\delta)| V(h_2|{[\delta,1-\delta]}),
  \label{eq_compute_on_one_face}
\end{align}
where $V(h_2|{[\delta,1-\delta]})$ is the variation of $h_2|{[\delta,1-\delta]}$.
It is easy to see that, for $\phi \to 0$ and some $K_1>0$, $h_1(\phi) \sim K_1 \log(\phi) \sim h_1(1-\phi)$ .
Thus, the first summand in \eqref{eq_compute_on_one_face} goes to $0$ for $n \to \infty$.
On the other hand we have
\begin{align}
  &D_n^*(\vB_2) |h_1(1-\delta)| V(h_2|{[\delta,1-\delta]}) \sim K_2 D_n^*(\vB_2) \log^2 \delta \leq K_3 n^{-\alpha_2} \log^2 n.
\end{align}
for constants $K_2, K_3 >0$. This shows that also the second term in \eqref{eq_compute_on_one_face} goes to $0$. So, we proved \eqref{eq_convergence_log_finite_type_d=2}. Equations \eqref{eq_convergence_arg_finite_type_d=2} and \eqref{eq_convergence_argarg_finite_type_d=2} are straightforward, with the given computations above and we conclude Lemma~\ref{lem_conv_log_finite_type_d=2}.
\end{proof}
This completes the proofs of Theorem~\ref{thm_CLT_W1_general_ddim} and~\ref{thm_CLT_W2_general_ddim}.
\end{proof}

\bibliographystyle{plain}

\end{document}